\documentclass[12pt]{article}
\usepackage{amsmath, amsthm, amssymb, amsfonts, mathtools, graphicx, enumerate}
\usepackage{times}
\usepackage{booktabs}
\usepackage{url}
\usepackage{color,soul}
\usepackage{enumerate}
\usepackage[inline]{enumitem} 
\usepackage{listings}
\usepackage{float}
\usepackage[backend=biber,style=ieee,uniquename=false,defernumbers=true]{biblatex}
\usepackage[justification=centering]{caption}
\usepackage{titlesec} % rename sections
\usepackage{tocloft}
\usepackage{algorithm,algorithmic}
\usepackage{placeins} %float barriers
\usepackage{subcaption} %subfigures
\usepackage{mathrsfs}
\usepackage{bbm}
\usepackage{hyperref,cleveref} % bi-directional proof linking
\newcommand{\R}{\mathbb{R}}

\newcommand{\expfunc}[1]{\exp\left\{  #1 \right\}}

\newcommand{\E}[0]{\mathbb{E}}

\newcommand{\T}[0]{^\top}
\newcommand{\Prob}[0]{\mathbb{P}}
\newcommand{\suml}[2]{\sum_{#1}^{#2}}

\newcommand{\indicator}[1]{1\kern-4pt 1(#1)}
\newcommand{\indep}[0]{\perp\kern-5pt\perp}
\newcommand{\tr}[0]{\textrm{tr}}

\newcommand{\mkloc}[0]{\mathcal{M}_K^{\textrm{loc}}}
\newcommand{\proofref}[1]{\hyperlink{#1}{proof}} % bi-directional linking
\newcommand{\rom}[1]{\operatorname{\uppercase\expandafter{\romannumeral #1\relax}}}
\newcommand{\dx}{\mathop{}\!\mathrm{d}x}

\newtheorem{theorem}{Theorem}
\newenvironment{delayedproof}[1]
 {\begin{proof}[\raisedtarget{#1}Proof of \Cref{#1}]}
 {\end{proof}}
\newcommand{\raisedtarget}[1]{%
  \raisebox{\fontcharht\font`P}[0pt][0pt]{\hypertarget{#1}{}}%
}
\newtheorem{lemma}[theorem]{Lemma}

\newtheorem{corollary}[theorem]{Corollary}
\newtheorem{definition}[theorem]{Definition}
\newtheorem{remark}[theorem]{Remark}

\numberwithin{theorem}{section}
\numberwithin{equation}{section}

\DeclareMathOperator*{\argmin}{arg\,min}

\definecolor{codegreen}{rgb}{0,0.6,0}
\definecolor{codegray}{rgb}{0.5,0.5,0.5}
\definecolor{codepurple}{rgb}{0.58,0,0.82}
\definecolor{backcolour}{rgb}{0.95,0.95,0.92}
\definecolor{teal}{rgb}{0,0.75,0.75}
\lstdefinestyle{mystyle}{
    backgroundcolor=\color{backcolour},   
    commentstyle=\color{codegreen},
    keywordstyle=\color{magenta},
    numberstyle=\tiny\color{codegray},
    stringstyle=\color{codepurple},
    basicstyle=\ttfamily\footnotesize,
    breakatwhitespace=false,         
    breaklines=true,                 
    captionpos=b,                    
    keepspaces=true,                 
    numbers=left,                    
    numbersep=5pt,                  
    showspaces=false,                
    showstringspaces=false,
    showtabs=false,                  
    tabsize=2
}

\newcommand{\deltafunc}{\frac{\delta^2}{\vVar^2}}

\newcommand{\thenewerboundary}{{(4D_4)^{-1}}}
\newcommand{\vVar}{\Upsilon}
\newcommand{\txi}{\tilde{\xi}}

\DeclareFieldFormat{labelalpha}{\mkbibbold{#1}}
\DeclareFieldFormat{extraalpha}{\mkbibbold{\mknumalph{#1}}}

\AtBeginBibliography{
  \DeclareFieldFormat{labelalpha}{#1}

  \DeclareFieldFormat{extraalpha}{\mknumalph{#1}}}

\usepackage{hyperref}

\title{Statistical Guarantees for Covariance Estimation with Adversarial Corruption under Constraints}
\author{Samuel Ozminkowski \and Hongmei Jiang \and Matey Neykov \and
Department of Statistics and Data Science, Northwestern University}
\begin{document}
\maketitle

\begin{abstract}
    We obtain a near-minimax rate on covariance estimation of adversarially corrupted Gaussian data in Frobenius norm when the covariance estimation is assumed to lie in a star-shaped set $K \subseteq \R^{n\times n}$. Assuming a known upper bound of $\varepsilon < 1/32$ on the fraction of the $N$ observations of data $X$ are arbitrarily corrupted by an omniscient adversary, we obtain the near-minimax rate (up to constants) of
    \begin{align*}
        \left(\varepsilon^2 \vee \delta^{\star2} \right)\wedge d^2 \lesssim \inf_{\hat\Sigma} \sup_{\Sigma \in K} \sup_{\mathcal{C}} \E \| \hat\Sigma(\mathcal{C}(X)) - \Sigma \|_F^2 \lesssim \left(\varepsilon^2\log^2(1/\varepsilon) \vee \delta^{\star2} \right)\wedge d^2
    \end{align*}
    where $d$ is the diameter of the set $K$, $N \gtrsim \sup_{\delta > 0} \log \mathcal{M}^{\operatorname{loc}}(\delta, 2c)$, and
    \begin{align*}
        \delta^*=\sup\left\{ \delta\geq 0: \frac{N}{\Upsilon^2} \delta^2\leq \log \mathcal{M}_K^{\operatorname{loc}}(\delta,c) \right\}
    \end{align*}
    for an arbitrary large constant $c$ and the local metric entropy of $K$, $\mathcal{M}^{\operatorname{loc}}_K(\delta,c)$. We believe this is the first such general result on structured robust covariance estimation in the Frobenius norm.
    % We further apply our results to the class of covariance matrices in the class
    % \begin{align*}
    %     \mathcal{U}_q(s_0(n)) \coloneqq \left\{ \Sigma: \Sigma \succ 0, \max_i \sigma_{ii} \leq K, \max_i \sum_{j=1}^n |\sigma_{ij}|^q \leq s_0(n) \right\}, \hspace{4mm} q \in [0,2)
    % \end{align*} 
    % for which we prove an upper bound on the minimax rate
    % \begin{align*}
    %     \omega \varepsilon^2 \log^2(1/\varepsilon)\bigvee n s_0(n) \omega^{1-q/2} \left(\frac{\log n}{N}\right)^{1-q/2},
    % \end{align*} and the sparse spiked covariance model, for which we prove the minimax rate 
    % \begin{align*}
    %     \varepsilon^2 \omega \bigvee \frac{\omega}{N} \mathcal{M}_K^{\textrm{loc}}\left(\sqrt{\delta},c\right) \lesssim \mathfrak{M}(X) \lesssim  \varepsilon^2 \log^2(1/\varepsilon)\omega \bigvee \frac{\omega}{N}\mathcal{M}_K^{\textrm{loc}}\left(\frac{\delta}{2K_{\max}},c\right)
    %     \label{eq:k_sparse_rate}
    % \end{align*} 
    % for the set of covariance matrices 
    % \begin{align*} 
    %     K^+ \coloneqq \left\{ \Sigma: \Sigma = I + xx\T, x \in K\textrm{ star shaped around $0$, $\|x\|_2 \leq K_{\max}$} \right\}.
    % \end{align*}
    % where $\mathcal{M}_K^{\textrm{loc}}(\delta,c)$ is the local metric entropy of $K$.
\end{abstract}

\tableofcontents

\section{Introduction}

Covariance estimation is one of the classic goals of statistics. In recent years a large body of work has been established focusing on high dimensional data as well as on corrupted data regimes, where some small $\varepsilon$ fraction of the data collected is incorrect in some way \cite{huber_robust_1964,diakonikolas_algorithmic_2023a}. We focus on the adversarial corruption regime, where an adversary with full knowledge of the true data generating mechanism can change as much as $\varepsilon$ proportion of the data in any way it pleases while we have no knowledge of which data is changed or the nature of said changes. 

Our goal is to estimate the covariance matrix in the corrupted multivariate Gaussian regime, where the uncorrupted data generating mechanism is \[\tilde{X_i} \sim \mu + \tilde{\xi}_i \textrm{ for } \tilde{\xi}_i \sim N(0,\Sigma).\] Here, $\tilde{\xi}_i \in \R^{n}$ for known dimension $n$. We develop minimax error upper and lower bounds for estimating $\Sigma$ with regards to the Frobenius norm. In the remainder of the paper we assume without loss of generality that $\mu=0$, because if $\mu \neq 0$ we can shuffle the data and then work on the difference of pairs: $X_1^\prime = \frac{1}{\sqrt{2}}(X_1 - X_2), X_2^\prime = \frac{1}{\sqrt{2}}(X_4-X_3)$, etc. In the worst case this doubles the corruption rate, hence a theorem proved for contamination \(<1/32\) yields the unknown-mean version when the original contamination is \(<1/64\). This procedure also halves the sample size but does not change the rate of increase in the sample size, so that requirement does not change. 

\subsection{Related Literature}

\subsubsection{Mean estimation}

Much of the literature on estimation in the adversarial corruption regimes focuses on mean estimation in the $\ell_2$ vector norm. Many of the algorithms and strategies are relevant to the task of covariance estimation as well. For this target, \textcite{neykov_minimax_2023} developed a local packing strategy which was integral in proving a minimax rate for estimation of the mean in the Gaussian Sequence Model. \textcite{prasadan_information_2025} then generalize this result to prove the minimax rate for mean estimation when the data is sub-Gaussian. Similar results under different data regimes have been presented by a number of researchers. For example, \textcite{lugosi_robust_2021} present robust mean estimation under contamination under the assumption that the observations are independent and identically distributed. \textcite{novikov_robust_2025} provide guarantees for robust mean estimation without moment assumptions, only assuming symmetry. They do this by adapting a filtering algorithm for moment constraints to just symmetric distributions. More similarly to our regime, \textcite{minsker_uniform_2019} shows guarantees for robust mean estimation under the assumption that random variables have low order moments (between 2nd and 3rd) and that the trace and maximum eigenvalue of the covariance matrix are known. Perhaps the most prolific researcher in this area has been Diakonikolas. For example, \textcite{diakonikolas_robust_2019} develop an algorithm for mean estimation with corrupted Gaussians, mixtures of Gaussians, binary product distributions, and balanced binary products under corruption regime. In general, Diakonikolas has focused a lot on efficient, practical estimation algorithms \cite{diakonikolas_being_2017, diakonikolas_robust_2019, cheng_robust_2018, diakonikolas_robustly_2018, diakonikolas_statistical_2017}. Finally, in a more general regime, \textcite{bellec_sharp_2016} gives error bounds for means estimated via least squares under shape-restricted sets where the model is misspecified such that the true mean is not in the shape-restricted set. 

\subsubsection{Covariance estimation}

Next, we turn to the literature for covariance estimation. The bulk of the work in this area analyzes the error of estimators under the operator norm. For example, \textcite{abdalla_covariance_2023} present a similar result to ours, but their guarantees are for estimation error in the operator norm, under the assumption that the true covariance matrix satisfies $L_p-L_2$ norm equivalence for $p \geq 4$. \textcite{fan_large_2013} give bounds in the maximal norm for covariance matrices of factor models. \textcite{chen_robust_2018} present guarantees robust covariance estimation under Huber contamination model focusing on the operator norm, and \textcite{minasyan_statistically_2025} give results for the case where the data takes the form of Gaussian vectors. Interestingly, their error rate is dependent on trace of $\Sigma$ and operator norm of $\Sigma$. Estimation error under the operator norm is a natural area of interest because the operator norm does not necessarily punish increased matrix size, while there is a direct relationship between the size of a matrix and its Frobenius norm. Furthermore, the Frobenius norm is the matrix extension of the vector mean squared error, which is a classic measure of error in statistics.

However, for this reason the non-asymptotic properties of an estimator under the Frobenius norm can be very informative. To this point some interesting results have been presented for this norm. \textcite{ma_volume_2013} prove a minimax rate for both mean and covariance estimation under the (quite broad) set of unitarily invariant norms in the uncorrupted regime, with the only assumption being that the operator norm of the true covariance matrix is bounded. Of course, given the broad set of norms under consideration, this bound can be expected to be quite loose if one is willing to make additional assumptions. \textcite{lai_agnostic_2016} developed a polynomial time algorithm for covariance estimation with Frobenius error on order $\sqrt{\log n} \lVert \Sigma\rVert_2$. \textcite{cheng_faster_2019} present an algorithm for robust covariance matrix estimation with a focus on algorithmic speed, and obtain a Frobenius error rate which is dependent on the spectral norm of the true covariance matrix. \textcite{minsker_robust_2024} present a LASSO-type estimator and prove bounds based on the rank of the true covariance under $L_4-L_2$ norm equivalence in both Frobenius and operator norms. Finally, in the case most similar to our work, \textcite{novikov_nearly_2025} gives a covariance estimation procedure with a bound on the relative Frobenius norm: $\lVert\Sigma^{-1/2}\hat\Sigma\Sigma^{-1/2} - I\rVert_F \leq O(\epsilon \log(1/\epsilon))$. This algorithm requires the data follow an elliptical distribution with a covariance matrix $\Sigma$ with effective rank greater than $C \log n$, and $N \geq C n^2 (\log^5n) / \epsilon^2$ for some constant $C$. No moment conditions on $\Sigma$ are required, but the bounds are for a given $\Sigma$ (i.e. not minimax). This algorithm also requires the condition $\epsilon \geq C \log(n)/\sqrt{n}$ for the Frobenius error bound.

\subsubsection{Relevant results from the uncorrupted regime}
We also note here some important results in the uncorrupted regime which should be extended to the adversarial corruption regime. In one of the classic results of high-dimensional covariance estimation, \textcite{bickel_covariance_2008} prove minimax rates in both the spectral and Frobenius norms when the covariance matrix is in the uniformity class of covariance matrices invariant under permutations. \textcite{cai_optimal_2021} give minimax bounds for the spiked covariance model (with normally distributed data) where the lower bound is based on the local metric entropy but the upper bound is based on the Dudley's entropy integral. \textcite{vu_minimax_2012} provide minimax bounds for the spiked covariance model with sparse covariance, where the vectors which form the spikes are in the $\ell_q$ ball. The lower bound is based on the sparsity and requires an assumption that the vectors are not too dense. These bounds differ by a factor of the radius of the $\ell_q$ ball. Another closely related framework to the adversarial corruption regime is differential privacy. In this framework, the exclusion of a single subject is guaranteed to have a pre-defined small effect on the outcome of an analysis. In this framework, \textcite{cai_optimal_2024} establish minimax rates under any Schatten-$q$ norm for covariance matrix estimation in the spiked covariance model for sub-Gaussian data, and \textcite{cai_minimax_2026} provide an estimation algorithm based on the blockwise tri-diagonal operator and show minimax rates under spectral and Frobenius norms for sub-Gaussian data.

\subsection{Notation and Definitions}

First and foremost, $\|A\|_F$ refers to the Frobenius norm of the matrix $A$. This is our main object of study, so when it is clear we may drop the $F$ and refer to the Frobenius norm of a matrix $A$ as $\|A\|$. In the vector space, the Euclidean norm (or 2-norm) is referred to as $\|a\|_2$ for a vector $a$. However, for a matrix $\|A\|_2$ refers to the spectral norm - the largest singular value of the matrix $A$ - the matrix norm induced by the vector $2-$norm. We do our best to avoid confusion between the matrix spectral norm and the vector 2-norm in the text. Next $[N]$ is used to indicate the set of integers $\{1,2,...,N\}$. $N(\mathbf{\mu},\Sigma)$ refers to the multivariate normal distribution with mean vector $\mathbf{\mu}$ and covariance matrix $\Sigma$. The indicator variable $\indicator{B}$ is equal to $1$ if condition $B$ evaluates to true and $0$ otherwise. $\mathbb{I}_p$ refers to the identity matrix, a $p\times p$ matrix with ones on the diagonal. The subscript is dropped when the dimension is clearly implied. $TV(P,Q)$ refers to the total variation distance between measures $P$ and $Q$ on probability space $(\Omega,\mathcal{F})$: $TV = \sup_{C \in \mathcal{F}} |P(C) - Q(C)|$. The maximum of two numbers $a, b \in \mathbb{R}$ is referred to as $a \vee b$ while the minimum is $a \wedge b$. $B_F(A,\delta)$ indicates the $\delta-$ball around point $A$ with respect to the Frobenius norm. When the meaning is clear we drop the subscript and just refer to it as $B(A,\delta)$. 

Next, we give key definitions required for our results. Definitions \ref{def:local_metric_entropy}-\ref{def:star_shaped} are taken directly from \textcite{prasadan_information_2025}.

\begin{definition}[Local metric entropy]\label{def:local_metric_entropy}
    For a set $K$, the $\delta$-packing number is the maximum cardinality $\mathcal{M}(\delta,K)$ of a set $\{\Sigma_1,...,\Sigma_M\} \subseteq K$ such that $\|\Sigma_i - \Sigma_j\|_F \geq \delta$ $\forall i\neq j$. Then for a fixed constant $c>0$ \textbf{the local metric entropy} for the set $K$ is $\mathcal{M}_K^{\operatorname{loc}}(\delta,c) = \sup_{\Sigma \in K} \mathcal{M}(\delta/c, B(\Sigma,\delta) \cap K)$. We refer to $\delta$ as the \textbf{local radius} and c as the \textbf{radius factor}. When the set $K$ is unambiguous, we may drop the $K$ and write $\mathcal{M}^{\operatorname{loc}}(\delta,c)$.
\end{definition}

\begin{definition}[Star-shaped sets]\label{def:star_shaped}
    A set $K$ is called \textbf{star-shaped with center $k^\star$} if $\exists k^\star \in K$ such that $\forall k \in K$ and $\alpha \in [0,1]$, $\alpha k + (1-\alpha)k^\star \in K$.  
\end{definition}

\begin{definition}[Minimax error under adversarial corruption]
    The minimax estimation under adversarial corruption is given as \[\mathfrak{M}(X) = \inf_{\hat\Sigma} \sup_{\Sigma} \sup_{\mathcal{C}_\varepsilon} \E \| \hat\Sigma(\mathcal{C}_\varepsilon(\tilde{X})) - \Sigma \|_F^2 \]
    where $\mathcal{C}_\varepsilon$ gives the adversarial corruption scheme, $\Sigma$ is the true covariance of $X$ before corruption, and $\hat\Sigma$ is the estimator of $\Sigma$ given the corrupted data $\mathcal{C}_\varepsilon(\tilde{X})$. When the corruption rate is clear we drop the $\varepsilon$ and simply write $\mathcal{C}(\tilde{X})$. 
\end{definition}

We close this section by repeating two lemmas from \textcite{prasadan_information_2025}. For proofs, see Lemma 1.3 and Lemma 1.4 in that paper. Both lemmas are proved for vectors with the Euclidean norm, but the same proofs hold exactly for matrices in the Frobenius norm. 

\begin{lemma}\label[lemma]{lemma:d_line_segment}
    Any bounded star-shaped set $K$ with diameter $d$ contains a line segment of length at least $d/3$.
\end{lemma}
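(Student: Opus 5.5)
The idea is to pick two points of $K$ that nearly realize the diameter, and then use the star center to pass from those two points to a segment of $K$ through the center. Let $k^\star$ be the center of $K$ given by \Cref{def:star_shaped}. Since $K$ has diameter $d$, for any $\eta>0$ there are points $A,B\in K$ with $\|A-B\|_F\geq d-\eta$. (If the supremum is attained, we may take $\eta=0$.)

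By star-shapedness, both segments $[k^\star,A]=\{\alpha A+(1-\alpha)k^\star:\alpha\in[0,1]\}$ and $[k^\star,B]$ are contained in $K$. The triangle inequality in the Frobenius norm gives
\begin{align*}
d-\eta\leq \|A-B\|_F \leq \|A-k^\star\|_F+\|k^\star-B\|_F .
\end{align*}
Hence at least one of $\|A-k^\star\|_F$ and $\|B-k^\star\|_F$ is at least $(d-\eta)/2$. The corresponding segment lies in $K$ and has length at least $(d-\eta)/2$.

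The only delicate step is handling the supremum in the definition of the diameter, since it need not be attained. One way is to choose $\eta\leq d/6$, which gives a segment of length at least $(d-d/6)/2=5d/12\geq d/3$. The case $d=0$ is trivial, since a degenerate segment then suffices. This slack is exactly why the statement is phrased with $d/3$ rather than the sharper $d/2$. The argument uses nothing beyond the norm axioms, so it carries over verbatim from vectors with the Euclidean norm to matrices with the Frobenius norm.
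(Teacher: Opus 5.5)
Your argument is correct and is essentially the one the paper relies on: the paper does not reprove the lemma but cites Lemma 1.3 of Prasadan and Neykov. That proof, as far as I recall it, likewise picks two nearly diameter-realizing points, applies the triangle inequality through the star center $k^\star$ to get one of them at distance at least $d/3$ from $k^\star$, and uses star-shapedness to put the segment to $k^\star$ inside $K$. Your handling of the non-attained supremum (taking $\eta\leq d/6$) and of the case $d=0$ is fine, and, as you note, only the norm axioms are used, so the argument transfers verbatim to the Frobenius norm.
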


\begin{lemma}\label[lemma]{lemma:loc_metric_entropy_noninc}
    For any star-shaped set $K$ and constant $c>0$, the local metric entropy is non-increasing in terms of the local radius. That is, for $\delta_0 > \delta_1$, $\mathcal{M}_K^{\operatorname{loc}}(\delta_0,c) \leq \mathcal{M}_K^{\operatorname{loc}}(\delta_1,c)$.
\end{lemma}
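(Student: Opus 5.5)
The statement to prove is \Cref{lemma:loc_metric_entropy_noninc}: for star-shaped $K$ with center $k^\star$ and $\delta_0>\delta_1$, we want $\mathcal{M}_K^{\operatorname{loc}}(\delta_0,c)\le \mathcal{M}_K^{\operatorname{loc}}(\delta_1,c)$. The plan is to shrink a packing at scale $\delta_0$ toward the center of the star, which produces a packing at scale $\delta_1$ of the same size. The Frobenius norm plays no special role: only homogeneity and translation invariance of the norm are used, so the vector argument carries over verbatim.

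First, fix an arbitrary $\Sigma\in K$ and a maximal $\delta_0/c$-packing $\{\Sigma_1,\dots,\Sigma_M\}\subseteq B(\Sigma,\delta_0)\cap K$. Set $\alpha=\delta_1/\delta_0\in(0,1)$ and consider the affine contraction
\[
\phi(A)=\alpha A+(1-\alpha)k^\star .
\]
Define the new points $\Sigma_i'=\phi(\Sigma_i)$ and the new center $\Sigma'=\phi(\Sigma)$.

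Next, check three properties of these points.
\begin{enumerate}[label=(\roman*)]
\item Each $\Sigma_i'$ and $\Sigma'$ lies in $K$, by \Cref{def:star_shaped}, since $\alpha\in[0,1]$.
\item Distances scale exactly: $\|\Sigma_i'-\Sigma_j'\|_F=\alpha\|\Sigma_i-\Sigma_j\|_F\ge \alpha\delta_0/c=\delta_1/c$.
\item Distances to the center scale the same way: $\|\Sigma_i'-\Sigma'\|_F=\alpha\|\Sigma_i-\Sigma\|_F\le\alpha\delta_0=\delta_1$.
\end{enumerate}
Together these say $\{\Sigma_i'\}$ is a $\delta_1/c$-packing of $B(\Sigma',\delta_1)\cap K$ with $\Sigma'\in K$. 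Because $\alpha>0$, the map $\phi$ is injective, so the $M$ new points are distinct.

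It follows that $\mathcal{M}(\delta_0/c,B(\Sigma,\delta_0)\cap K)\le \mathcal{M}(\delta_1/c,B(\Sigma',\delta_1)\cap K)\le\mathcal{M}_K^{\operatorname{loc}}(\delta_1,c)$. Taking the supremum over $\Sigma\in K$ gives the claim.

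The only step that needs care is that the supremum on the right-hand side ranges over centers in $K$. The shrunken center $\Sigma'$ is one of these precisely because $K$ is star-shaped. Nothing else is delicate. If a packing number were infinite, the same map would send an infinite packing to an infinite packing, so the inequality still holds.
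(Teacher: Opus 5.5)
Your proof is correct and follows essentially the same route the paper relies on. The paper gives no proof of its own and defers to Lemma 1.4 of Prasadan and Neykov, whose argument is the same homothety toward the star center $k^\star$ with ratio $\delta_1/\delta_0$: it keeps points in $K$, scales all distances by exactly that factor, and moves the ball's center to another point of $K$, and only homogeneity and translation invariance of the Frobenius norm are needed. One small wording fix: apply the contraction to an arbitrary $\delta_0/c$-packing (or one of maximum cardinality) rather than a ``maximal'' one, so that taking the supremum is immediate.
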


\subsection{Problem Set-Up}

\paragraph*{Conditions}

Next, we establish conditions under which our results hold: 

\begin{enumerate}[label={(\arabic*)},itemjoin*={ and }]
    \item The uncorrupted data $\tilde{X}_i = \mu + \tilde{\xi}_i$ where $\tilde{X}_i \in \R^n$ for $i \in [N]$ with $\tilde{X}_i$ independent, \label{cond:x}
    \item $\tilde{\xi}_i \sim N(0,\Sigma)$, \label{cond:xi}
    \item the true covariance matrix is symmetric positive definite with bounded eigenvalues, \\$\Sigma \in K \subseteq \{ \Sigma : \Sigma = \Sigma\T, \Sigma \succ 0, 0 < c_0 \leq \lambda_{\min}(\Sigma) \leq \lambda_{\max}(\Sigma) \leq c_1 < \infty \}$, \label{cond:K_set}
    \item $K$ is star shaped and bounded with diameter $d$, \label{cond:K_star}
    \item $X = \mathcal{C}_\varepsilon(\tilde{X})$ is observed, where $\varepsilon \leq 1/32$ of the datapoints are arbitrarily corrupted by an adversary with full knowledge of the underlying distribution. \label{cond:corruption}
\end{enumerate}

\begin{remark}
    We note here that the bound on $K$ is implied by the upper bound on $\lambda_{\max}(\Sigma)$. Specifically, that upper bound gives $d \leq 2\sqrt{n}c_1$. 
\end{remark}

\subsection{Organization}

The remainder of this paper is organized as follows. First, a lower bound on the minimax rate, $\mathfrak{M}(X) \gtrsim \delta^2 \bigvee (\varepsilon^2 \wedge d^2)$, is established in \Cref{sec:lower_bound}. This lower bound is proven directly based on existing and commonly used tools in the corrupted mean estimation literature. The next main challenge of the paper relies on proving a matching upper bound. In fact, we are unable to strictly do so. However, in \Cref{sec:upper_bound} we establish an upper bound which matches the lower bound up to a multiple of $\log^2(1/\varepsilon)$. We explain why this upper bound should be regarded as tight enough despite not matching the lower bound exactly. In \Cref{sec:examples}, we tackle the final main challenge of this paper: we show how the bound can be applied to real covariance matrices of interest in the literature. Most importantly, in this section we show how our bound allows us to extend the probability upper bound of \textcite{bickel_covariance_2008} on the uniformity class of matrices invariant under permutation, $\mathcal{U}_q$, to the corrupted data regime.

\section{Lower Bound}\label{sec:lower_bound}

We begin with a discussion of the lower bound. The lower bound relies heavily on Fano's inequality, so we restate it here without proof. For a proof see \textcite[pp. 502]{wainwright_highdimensional_2019}.

\begin{lemma}[Fano's Inequality] \label[lemma]{lemma:fano}
    Let $\{\Sigma^1,\dots,\Sigma^M\}$ be a $\eta$-separated set of points on the parameter space $K$ with the Frobenius norm. Suppose $J$ is uniformly distributed over the index set $[M]$ and $(X | J=j) = \mu + \xi$ for $\xi \sim N(0, \Sigma_j)$. Then we have 
    \begin{align}
        \inf_{\hat\Sigma} \sup_\Sigma \E \| \hat\Sigma(X) - \Sigma \|_F^2 \geq \frac{\eta^2}{4} \left(1 - \frac{I(X;J) + \log 2}{\log M} \right) 
    \end{align}
    where the mutual information $I(X;J)$ is defined as the average KL divergence of $\prod_{i=1}^{N}(X_i|J=j)$ as given in \textcite{wainwright_highdimensional_2019} eq. (15.30).
\end{lemma}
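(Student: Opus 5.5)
The plan is the standard reduction from estimation to multiple hypothesis testing, followed by the information-theoretic Fano bound for testing. Fix any estimator $\hat\Sigma$ of $\Sigma$ built from the sample $X=(X_1,\dots,X_N)$. Since the packing $\{\Sigma^1,\dots,\Sigma^M\}$ lies in $K$, the supremum over $\Sigma\in K$ dominates the average over the uniform prior on $J$:
\begin{align*}
\sup_{\Sigma\in K}\E\|\hat\Sigma(X)-\Sigma\|_F^2 \;\geq\; \frac{1}{M}\sum_{j=1}^M \E\big[\|\hat\Sigma(X)-\Sigma^j\|_F^2 \,\big|\, J=j\big] \;=\; \E\|\hat\Sigma(X)-\Sigma^J\|_F^2 .
\end{align*}
Markov's inequality then gives $\E\|\hat\Sigma(X)-\Sigma^J\|_F^2 \geq \frac{\eta^2}{4}\,\Prob\big(\|\hat\Sigma(X)-\Sigma^J\|_F\geq \eta/2\big)$. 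The remaining task is to lower bound this probability uniformly over all estimators.

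Next I would convert $\hat\Sigma$ into a test. Define $\psi(X)=\argmin_{j\in[M]}\|\hat\Sigma(X)-\Sigma^j\|_F$, with ties broken arbitrarily. Suppose $\|\hat\Sigma-\Sigma^J\|_F<\eta/2$. Then for every $k\neq J$, $\eta$-separation and the triangle inequality give
\begin{align*}
\|\hat\Sigma-\Sigma^k\|_F\geq \|\Sigma^k-\Sigma^J\|_F-\|\hat\Sigma-\Sigma^J\|_F>\eta/2,
\end{align*}
so $\psi(X)=J$. Hence $\{\psi(X)\neq J\}\subseteq\{\|\hat\Sigma-\Sigma^J\|_F\geq\eta/2\}$. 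It therefore suffices to show $\Prob(\psi(X)\neq J)\geq 1-\frac{I(X;J)+\log 2}{\log M}$ for every measurable $\psi$.

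This last inequality is the classical Fano bound, and it is the only genuinely information-theoretic step. Write $p_e=\Prob(\psi\neq J)$. The chain is as follows.
\begin{enumerate}[label=(\roman*)]
\item Fano's entropy inequality: $H(J\mid\psi)\leq h(p_e)+p_e\log(M-1)\leq \log 2+p_e\log M$. This is proved by expanding $H(E,J\mid\psi)$ in two ways, where $E=\indicator{\psi\neq J}$.
\item The data-processing inequality, since $J\to X\to\psi(X)$ is a Markov chain: $I(J;\psi)\leq I(J;X)$.
\item Because $J$ is uniform, $H(J)=\log M$, so $H(J\mid\psi)=\log M-I(J;\psi)\geq\log M-I(X;J)$.
\end{enumerate}
Combining (i)–(iii) and rearranging yields the required bound on $p_e$.

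Finally, $I(X;J)$ equals the average KL divergence $\frac1M\sum_j \mathrm{KL}\big(P_j^{\otimes N}\,\|\,\bar P\big)$ with $\bar P=\frac1M\sum_k P_k^{\otimes N}$, which matches the definition cited from \textcite{wainwright_highdimensional_2019}. The Gaussian form of $P_j$ plays no role here; it matters only later, when $I(X;J)$ is bounded in applications. Chaining the displays and taking the infimum over $\hat\Sigma$ gives the lemma. I expect no real obstacle. The only points needing care are the tie-breaking and measurability of $\psi$, and the conditional-entropy bookkeeping in step (i).
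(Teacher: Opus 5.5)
Your proposal is correct and matches the argument the paper relies on. The paper states this lemma without proof and cites Wainwright (p.~502), whose proof is the one you give: Markov's inequality at radius $\eta/2$ reduces estimation to a minimum-distance test, and Fano's testing bound then follows from the entropy inequality, data processing, and $H(J)=\log M$. Your identification of $I(X;J)$ with the average KL divergence to the mixture $\bar P$ also matches the definition the statement invokes.
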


The lower bound is given in two parts, and each is proven separately. We begin with the more general bound which is independent of the corruption and dependent solely on the nature of the local packing parameter $\delta$. The \proofref{lemma:lower_bound_1} of this lemma is relegated to \Cref{appendix:lower_bound_pfs}.

\begin{lemma}\label[lemma]{lemma:lower_bound_1}
    Consider $\delta$ such that 
	\begin{align*}
		\log \mkloc(\delta,c) > 4\bigg(\frac{C_0}{c_0^2} N\delta^2 \vee \log 2\bigg)
	\end{align*}
    for a constant $C_0>1$ to be defined later.
	Under Conditions \ref{cond:x}-\ref{cond:corruption},

    \begin{align}
        \inf_{\hat\Sigma} \sup_{\Sigma} \sup_{\mathcal{C}} \E \| \hat\Sigma(\mathcal{C}(X)) - \Sigma \|_F^2 \geq \inf_{\hat\Sigma} \sup_{\Sigma} \E \| \hat\Sigma(X) - \Sigma \|_F^2 \gtrsim \delta^2.
    \end{align}
\end{lemma}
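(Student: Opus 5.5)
The first inequality in the statement is immediate. The adversary may choose the identity corruption $\mathcal{C}(X)=X$, so for every estimator the supremum over $\mathcal{C}$ is at least the uncorrupted risk. Taking $\sup_\Sigma$ and then $\inf_{\hat\Sigma}$ preserves this. The substance is the second inequality, which I would prove with Fano's inequality (\Cref{lemma:fano}) applied to a local packing.

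\textbf{Construction.} By \Cref{def:local_metric_entropy}, for any $\delta$ satisfying the hypothesis there is a center $\Sigma_0\in K$ and points $\Sigma^1,\dots,\Sigma^M\in B(\Sigma_0,\delta)\cap K$ with $M=\mkloc(\delta,c)$ (up to an arbitrarily small slack in the supremum, which I will ignore). These points are pairwise $\delta/c$-separated in Frobenius norm. Hence $\eta=\delta/c$ in \Cref{lemma:fano}. All these points lie in $K$, so by Condition \ref{cond:K_set} their eigenvalues lie in $[c_0,c_1]$. Also, any two of them satisfy $\|\Sigma^j-\Sigma^k\|_F\le 2\delta$.

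\textbf{Mutual information bound.} I bound $I(X;J)$ by the average pairwise KL divergence between the $N$-fold products:
\[
I(X;J)\le \frac{1}{M^2}\sum_{j,k} N\,\mathrm{KL}\big(N(0,\Sigma^j)\,\|\,N(0,\Sigma^k)\big).
\]
For Gaussians,
\[
\mathrm{KL}=\tfrac12\big(\tr(\Sigma_k^{-1}\Sigma_j)-n-\log\det(\Sigma_k^{-1}\Sigma_j)\big).
\]
Writing $A=\Sigma_k^{-1/2}\Sigma_j\Sigma_k^{-1/2}$ with eigenvalues $\lambda_i>0$, this equals $\tfrac12\sum_i(\lambda_i-1-\log\lambda_i)$. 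I then use $x-1-\log x\le (x-1)^2/\min(x,1)$. Since $\lambda_i\ge c_0/c_1$, this gives
\[
\mathrm{KL}\le C\|A-I\|_F^2\le \frac{C}{c_0^2}\|\Sigma_j-\Sigma_k\|_F^2\le \frac{4C}{c_0^2}\delta^2 .
\]
The constant $C$ depends on $c_1/c_0$, which is where the unspecified $C_0>1$ comes from; I would define $C_0$ so that $\mathrm{KL}\le C_0\delta^2/c_0^2$, absorbing the factor $4$ if needed. This step is the only real technical point. The obstacle is making the KL-to-Frobenius comparison uniform, and the eigenvalue bounds in Condition \ref{cond:K_set} are exactly what make it work.

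\textbf{Conclusion.} We get $I(X;J)\le C_0N\delta^2/c_0^2$. The hypothesis $\log M>4(C_0N\delta^2/c_0^2\vee\log2)$ gives
\[
\frac{I(X;J)+\log 2}{\log M}<\frac14+\frac14=\frac12 .
\]
\Cref{lemma:fano} then yields a risk lower bound of at least
\[
\frac{\eta^2}{4}\cdot\frac12=\frac{\delta^2}{8c^2}\gtrsim\delta^2,
\]
since $c$ is a fixed constant. This completes the plan.
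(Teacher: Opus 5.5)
Your proof is correct and follows the paper's route: Fano's inequality applied to a maximal $\delta/c$-packing of $B(\Sigma^\star,\delta)\cap K$, with the Gaussian KL divergence controlled by Frobenius distance through the eigenvalue bounds of Condition \ref{cond:K_set}, which gives $\delta^2/(8c^2)$. The only difference is that the paper bounds $I(X;J)$ by the average KL divergence to the packing center $\Sigma^\star\in K$ (\Cref{cor:info_bound}), using $\max_j\|\Sigma^\star-\Sigma_j\|_F\le\delta$; this avoids the factor $4$ that your pairwise bound forces you to absorb into $C_0$, which is harmless because $C_0$ is only ever used elsewhere as an upper-bound constant.
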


Next we bound the minimax error from below with a bound more specific to our case. This bound relies on the smaller of the level of corruption and the diameter of the set containing $\Sigma$. The \proofref{lemma:lower_bound_2} of this lemma is also relegated to \Cref{appendix:lower_bound_pfs}.

\begin{lemma} \label[lemma]{lemma:lower_bound_2} Let $\xi \sim N(0,\Sigma)$. Under Conditions \ref{cond:x}-\ref{cond:corruption}  and if $\varepsilon > \frac{1}{\sqrt{N}}$ then 
\begin{align}
    \inf_{\hat\Sigma} \sup_{\Sigma} \sup_{\mathcal{C}} \E \| \hat\Sigma(\mathcal{C}(X)) - \Sigma \|_F^2 &\gtrsim \varepsilon^2 \wedge d^2.
\end{align}
\end{lemma}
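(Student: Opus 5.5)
The plan is the standard two-point (Le Cam) argument for contamination, using \Cref{lemma:d_line_segment} to find two covariance matrices in $K$ that are far apart in Frobenius norm yet close in total variation.

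First we build the pair. By \Cref{lemma:d_line_segment}, $K$ contains a segment of length at least $d/3$. Call its endpoints $A,B\in K$; every point of $[A,B]$ lies in $K$. Fix a small absolute constant $\kappa>0$ and choose $\Sigma_0=A$ and $\Sigma_1=A+t(B-A)$, where $t\in[0,1]$ is picked so that
\[
\|\Sigma_1-\Sigma_0\|_F=\eta:=\kappa\,\varepsilon\wedge (d/3).
\]
Both matrices lie in $K$, so by Condition \ref{cond:K_set} their eigenvalues lie in $[c_0,c_1]$.

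Next we show the two Gaussians are close in total variation. By Pinsker's inequality and the Gaussian KL formula, the Taylor expansion of $-\log\det$ near the identity gives
\[
\mathrm{KL}\big(N(0,\Sigma_1)\,\|\,N(0,\Sigma_0)\big)=\tfrac12\big(\tr(\Sigma_0^{-1}\Sigma_1)-n-\log\det(\Sigma_0^{-1}\Sigma_1)\big)\le C\,\|\Sigma_1-\Sigma_0\|_F^2/c_0^2 .
\]
This is where the eigenvalue bounds enter: they make the expansion valid as long as $\|\Sigma_0^{-1/2}(\Sigma_1-\Sigma_0)\Sigma_0^{-1/2}\|_2\le 1/2$, and taking $\kappa$ small relative to $c_0$ ensures this. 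Consequently
\[
TV\big(N(0,\Sigma_0),N(0,\Sigma_1)\big)\le C'\eta/c_0\le\varepsilon/2 .
\]
Hiding the dependence on $c_0,c_1$ is consistent with the $\gtrsim$ in the statement.

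Then we use the TV bound to confuse any estimator. If two distributions satisfy $TV(P_0,P_1)=\tau\le\varepsilon/2$, there is a common distribution $Q$ such that $P_0=(1-\tau)Q+\tau R_0$ and $P_1=(1-\tau)Q+\tau R_1$. The adversary couples the samples so that each point is drawn from $Q$ with probability $1-\tau$ and is otherwise the "excess" part. It then replaces the excess points under $P_j$ by draws from $R_{1-j}$ (equivalently, it resamples them so the corrupted sample is marginally the mixture $\tfrac12(P_0+P_1)$, which is identical under both hypotheses).

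The number of excess points is $\mathrm{Bin}(N,\tau)$ with mean at most $N\varepsilon/2$. Since $\varepsilon>1/\sqrt N$, we have $N\varepsilon\ge\sqrt N$, so a Chernoff bound shows the count is at most $\varepsilon N$ with constant probability (for $N$ large; smaller $N$ is absorbed in constants). Making this budget step rigorous is the main obstacle. The adversary's budget is a deterministic $\varepsilon N$, so we truncate: on the event that the count exceeds $\varepsilon N$, the adversary does nothing, and we only use the event of probability at least $1/2$ where the corrupted laws coincide. An alternative is to take $\tau=\varepsilon/4$, which leaves slack for the concentration step.

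On the good event $E$, with $\Prob(E)\ge 1/2$, the laws of $\mathcal{C}(X)$ under $\Sigma_0$ and $\Sigma_1$ agree. For any estimator $\hat\Sigma$, the triangle inequality gives
\[
\max_j\E\|\hat\Sigma-\Sigma_j\|_F^2\ \ge\ \tfrac12\,\Prob(E)\,(\eta/2)^2\ \gtrsim\ \eta^2 .
\]
Since $\eta^2\asymp\varepsilon^2\wedge d^2$, this yields the claim.
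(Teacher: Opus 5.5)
Your proposal is correct and is essentially the paper's argument. Both use a Le Cam two-point bound on a pair taken from the segment of \Cref{lemma:d_line_segment}, control the total variation by Pinsker plus a Gaussian KL bound, build an adversary that makes the two contaminated laws coincide, and use a binomial tail bound to stay within the budget $\varepsilon N$. The paper differs only in the mechanics. It uses the Huber-type mixtures $(1-\varepsilon'')P_j+\varepsilon''Q_j$ with $Q_j\propto(p_{3-j}-p_j)_+$ and $TV(P_1,P_2)=\varepsilon''/(1-\varepsilon'')$, which have identical densities, and then conditions on the number of contaminated points. Your truncation, which gives $TV\le \Prob(E^c)$ directly, is a slightly cleaner way to handle the budget.

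One correction. Replacing the excess points under $P_j$ by draws from $R_{1-j}$ is not equivalent to your parenthetical. Each corrupted point would then have marginal law $(1-\tau)Q+\tau R_{1-j}=P_{1-j}$, so the two hypotheses are swapped rather than made equal. Use the parenthetical version: replace the excess points by draws from $\tfrac12(R_0+R_1)$, so every point is marginally $\tfrac12(P_0+P_1)$ under both hypotheses. Alternatively, corrupt under only one hypothesis.

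One simplification. Since $\tau\le\varepsilon/2$, Markov's inequality gives $\Prob(\mathrm{Bin}(N,\tau)\ge\varepsilon N)\le\tau/\varepsilon\le 1/2$ for every $N$. This justifies $\Prob(E)\ge 1/2$ without the Chernoff step, without taking $N$ large, and in fact without using $\varepsilon>1/\sqrt N$. Hoeffding with $\varepsilon>1/\sqrt N$ only gives $\Prob(E)\ge 1-e^{-1/2}$, which is still a constant but not $1/2$.
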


\section{Upper Bound} \label{sec:upper_bound}

Here we construct an upper bound on the minimax rate for bounded $K$. In general, the proof proceeds as follows: first we construct a tree structure such that each level of the tree forms a packing set on $K$. The radius of the packing sets decreases geometrically with the tree depth. Then we introduce the robust estimator which is applied to each layer of the tree. We run a tournament via the estimator which ensures the error rate is bounded at each layer. We can then apply a union bound over the tree to calculate an upper bound on the minimax rate.

\subsection{Algorithm}\label{subsec:algorithm}

\begin{algorithm}[htbp]
    \renewcommand{\algorithmicrequire}{\textbf{Input:}}
    \renewcommand{\algorithmicensure}{\textbf{Output:}}
    \caption{Directed Tree Construction}
    \label{algorithm:tree_construction}

   \begin{algorithmic}[1]
        \REQUIRE Set $ K $, Root node $ s\in K $, diameter $ d>0 $, constant $ c $ chosen large enough;
        \ENSURE A directed tree structure $ \mathcal{G} $ where each layer $ J $ is denoted $ \mathcal{L}(J) $;

        \STATE $ \mathcal{L}(1)\leftarrow \{s\} $;

        \STATE Form a maximal $ (d/c) $-packing set of $ B(s,d)\cap K $. Add directed edges from $ s $ to each node in the packing set denoted $ \mathcal{O}(s) $;
        \STATE $ \mathcal{L}(2)\leftarrow \mathcal{O}(s)   $;

        \FOR{$ J = 3,4,\ldots $}
            \FORALL{$ u \in \mathcal{L}(J-1)  $}
                \STATE Form a maximal $ \left(d/2^{J-1}c\right) $-packing set of $ B\left(u, d/2^{J-2}\right) \cap K $. 
                \STATE Add directed edges from $ u $ to each node in $ \mathcal{O}(u) $;
            \ENDFOR
            \STATE $ \mathcal{U}_J \leftarrow $ lexicographically ordered set of all offspring nodes $ \mathcal{O}(u) $ for all $ u\in \mathcal{L}(J-1) $;

            \WHILE{$ \mathcal{U}_J \neq \varnothing $}
                \STATE Pick first element $ u_l^J $, of $ \mathcal{U}_J $;
                \STATE $ \mathcal{T}_J(u_l^J) \leftarrow \{u_k^J\in \mathcal{U}_J:\, \left\| u_k^J-u_l^J \right\| \leq \frac{d}{2^{J-1}c},\, k\neq l \} $;
                \FOR{$ u_k^J \in \mathcal{T}_J(u_l^J) $}
                    \STATE Remove edge $ \mathcal{P}(u_k^J) \to u_k^J $ and node $ u_k^J $ from $ \mathcal{G} $
                    \STATE Add edge $ \mathcal{P}(u_k^J) \to u_l^J $;
                \ENDFOR
                \STATE Remove $ \{u_l^J\} \cup \mathcal{T}_J(u_l^J) $ from $ \mathcal{U}_J $;
            \ENDWHILE

            \STATE $ \mathcal{L}(J)\leftarrow \bigcup_{u\in \mathcal{L}(J-1)} \mathcal{O}(u) $;            
        \ENDFOR
        \RETURN $ \mathcal{G} $.
    \end{algorithmic} 
\end{algorithm}

We begin by stating several properties of the pruned tree whose construction is described in \Cref{algorithm:tree_construction}. This lemma is an encapsulation of lemmas 3.1-3.3 in \textcite{prasadan_information_2025}. For proofs we refer the reader to that paper.

\begin{lemma}[Properties of the pruned tree construction] \label[lemma]{lemma:tree_properties}
    Let $G$ be the pruned tree constructed from algorithm 1. Then the following properties hold: 
    \begin{enumerate}[label=(\roman*)]
        \item For any $J>3$, $\mathcal{L}(J)$ is a $\frac{d}{2^{J-1}c}$-packing and a $\frac{d}{2^{J-2}c}$-covering of $K$
        \item For any $ J\geq 2 $, and for any parent node $ \mathcal{Y}_{J-1}\in \mathcal{L}(J-1) $, its offspring $ \mathcal{O}(\mathcal{Y}_{J-1}) $ form a $ \frac{d}{2^{J-2}c} $-covering set of $ B(\mathcal{Y}_{J-1},\frac{d}{2^{J-2}}) \cap K $. Also, its cardinality is bounded as \[\mathrm{ card }(\mathcal{O}(\mathcal{Y}_{J-1})) \leq \mkloc\left(\frac{d}{2^{J-2}},2c\right) \]
        \item For any $ J\geq 2 $, we have
        \begin{align*}
            \mathrm{ card }\left( \mathcal{L}(J-1)\cap B\left(\Sigma ,\frac{d}{2^{J-2}}\right) \right) \leq   \mkloc\left(\frac{d}{2^{J-2}},c\right) \leq \mkloc\left(\frac{d}{2^{J-2}},2c\right);
        \end{align*} \label{tree_property_3}
        \item Let $ [\mathcal{Y}_1,\mathcal{Y}_2,\ldots] $ be a path in $ \mathcal{G} $, i.e. $ \mathcal{Y}_{J+1}\in \mathcal{O}(\mathcal{Y}_J) $, $ \forall J\in \mathbb{N}^+ $. Then we have
        \begin{align*}
            \left\Vert \mathcal{Y}_{J'} - \mathcal{Y}_J \right\Vert_F \leq \frac{d(2+4c)}{c2^{J'}},\quad J\geq J'\geq 1. 
        \end{align*} \label{tree_property_4}
    \end{enumerate}
\end{lemma}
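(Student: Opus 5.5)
The plan is to verify the four properties directly from the construction in \Cref{algorithm:tree_construction}, treating the pruning step as a merging operation that preserves packing and covering up to a factor of two. I will argue by induction on the layer index $J$, since each layer is built from the previous one.

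\emph{Property (ii).} Before pruning, the offspring $\mathcal{O}(\mathcal{Y}_{J-1})$ form a maximal $\frac{d}{2^{J-1}c}$-packing of $B(\mathcal{Y}_{J-1},\frac{d}{2^{J-2}})\cap K$. Maximality makes this set a $\frac{d}{2^{J-1}c}$-covering of the ball. During pruning, a removed node $u_k^J$ is redirected to a surviving node $u_l^J$ with $\|u_k^J-u_l^J\|\le \frac{d}{2^{J-1}c}$. By the triangle inequality, the post-pruning offspring therefore cover the ball at radius $\frac{d}{2^{J-2}c}$.

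For the cardinality bound in (ii), the offspring are $\frac{d}{2^{J-1}c}=\frac{(d/2^{J-2})}{2c}$-separated points inside $B(\mathcal{Y}_{J-1},d/2^{J-2})\cap K$. Pruning only removes nodes, and redirected survivors were already separated. So their number is at most $\mathcal{M}(\frac{d/2^{J-2}}{2c},B\cap K)\le \mkloc(d/2^{J-2},2c)$.

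\emph{Property (i).} The while loop guarantees that surviving nodes of $\mathcal{L}(J)$ are pairwise more than $\frac{d}{2^{J-1}c}$ apart. Every removed node lies within that distance of the chosen representative, and each survivor is removed from $\mathcal{U}_J$ together with its neighbors, so any two survivors are separated. This gives the packing claim.

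For the covering claim, I show by induction that the balls $B(u,d/2^{J-2})$ for $u\in\mathcal{L}(J-1)$ cover $K$. This holds at the root because the diameter is $d$. At later layers it follows because $\mathcal{L}(J-1)$ is a $\frac{d}{2^{J-3}c}\le \frac{d}{2^{J-2}}$-cover once $c\ge 2$. Then any $k\in K$ lies in some parent ball, hence within $\frac{d}{2^{J-2}c}$ of an offspring by (ii).

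\emph{Property (iii).} Points of $\mathcal{L}(J-1)\cap B(\Sigma,d/2^{J-2})$ are $\frac{d}{2^{J-2}c}$-separated by (i). Rewriting, this is separation $(d/2^{J-2})/c$ inside a ball of radius $d/2^{J-2}$. When $\Sigma\in K$, this gives the bound $\mkloc(d/2^{J-2},c)$ directly.

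If $\Sigma\notin K$, I will recenter at a point of $K$ and pay a constant, or restrict to $\Sigma\in K$ as in the applications. This recentering step is the main obstacle. The second inequality in (iii) is monotonicity of packing numbers in the separation parameter.

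\emph{Property (iv).} I telescope along the path. A node $\mathcal{Y}_{J+1}$ is an offspring of $\mathcal{Y}_J$, possibly redirected. Before redirection it lay in $B(\mathcal{Y}_J,d/2^{J-1})$, and redirection moves it by at most $\frac{d}{2^{J}c}$. Hence
\[
\|\mathcal{Y}_{J+1}-\mathcal{Y}_J\|\le \frac{d}{2^{J-1}}+\frac{d}{2^{J}c}.
\]
Summing this geometric series from $J'$ onward gives at most $\frac{4d}{2^{J'}}+\frac{2d}{2^{J'}c}=\frac{d(2+4c)}{c2^{J'}}$. This matches the stated constant, with the top-layer indexing shift absorbed.
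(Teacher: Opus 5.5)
Your proof is correct. It is the same direct verification (a maximal packing is a covering, pruning at most doubles the covering radius and never increases cardinality, induction over layers, and telescoping $d/2^{j-1}+d/(2^jc)$ to get exactly $d(2+4c)/(c2^{J'})$) that underlies Lemmas 3.1--3.3 of Prasadan and Neykov, which the paper cites instead of reproving. Two small fixes: in (ii) the post-pruning offspring can lie slightly outside $B(\mathcal{Y}_{J-1},d/2^{J-2})$, so bound their number by noting that the redirect map from the pre-pruning packing onto them is surjective; and in (iii) there is no recentering obstacle, because $\Sigma\in K$ by Condition~\ref{cond:K_set}.
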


The key to constructing an upper bound for the minimax estimation error of covariance matrices in the Frobenius norm is to set up a series of hypothesis-testing problems. For each of these problems, two candidate points $\left(\Sigma_1,\Sigma_2 \in \R^{n\times n}\right)$ are considered. We aim to set up a test that chooses the point which is more representative of the estimated covariance matrices. Then, we can choose points in a local packing set of each layer in the pruned tree constructed via \Cref{algorithm:tree_construction}. The test is constructed as follows: given $\Sigma_1,\Sigma_2 \in \R^{n\times n}$ which are suitably separated $\left(\|\Sigma_1 - \Sigma_2\| \geq C\delta \textrm{ for a constant } C>0\right)$, we want to formally test whether the true $\Sigma$ belongs to either $B(\Sigma_1,\delta)$ or $B(\Sigma_2,\delta)$, as follows:
\begin{align*}
    \psi=\psi_{(\Sigma_1,\Sigma_2)} = \begin{cases}
        0,& \text{if } \Sigma \in B(\Sigma_1,\delta)\\
        1,& \text{if } \Sigma \in B(\Sigma_2,\delta)
    \end{cases} 
\end{align*}
We take the opportunity here for a digression to discuss the parameter $\delta$. Choosing this parameter correctly is imperative for our results - indeed, $\delta^{2}$ is itself one of the bounds in our minimax rate. It will become clear why the value of $\delta$ should be chosen in such a way through the proofs presented later, but for now it suffices to note that $\delta$ should be the largest value such that $N \deltafunc < \log M^{\operatorname{loc}}(\delta,c)$.

In order to formally measure the difference between each candidate point and the data we introduce two maps which take the candidate points and return a real number. These maps provide the backbone of our proof of the upper bound. The map $\tilde{V}_i$ uses the uncorrupted data while $V_i$ uses the corrupted data. Notice that for subjects without corrupted data, of which there are $N(1-\varepsilon)$, $\tilde{V}_i = V_i$. Thus, we can proceed as though the $V_i$s are corrupted directly and reduce the problem to a robust mean estimation problem (though with some important characteristics of the data which will be discussed later). We define these maps as follows: 
\begin{align}\label{equation:definition_V_i}
    \tilde{V}_i:= \dfrac{ \left\Vert \tilde{\xi}_i\tilde{\xi_i}^\top -\Sigma_1 \right\Vert _F^2 - \left\Vert \tilde{\xi}_i\tilde{\xi_i}^\top -\Sigma_2 \right\Vert _F^2  }{ \left\Vert \Sigma_2-\Sigma_1 \right\Vert _F}, \qquad V_i:= \dfrac{ \left\Vert \xi_i\xi_i^\top -\Sigma_1 \right\Vert _F^2 - \left\Vert \xi_i\xi_i^\top -\Sigma_2 \right\Vert _F^2  }{ \left\Vert \Sigma_2-\Sigma_1 \right\Vert _F }
\end{align} where $i$ is taken over the set of subjects $[N]$.
Notice that the sign of $V_i$ indicates whether the data from the $i^{\textrm{th}}$ subject provides evidence for $\Sigma_1$ ($V_i < 0$) or $\Sigma_2$ ($V_i > 0$). This will be key in the construction of the two tests which we will utilize. We will apply either a median test or the trimmed mean test of \textcite{lugosi_robust_2021} based on the ratio of the local radius (see \Cref{def:local_metric_entropy}) to the sub-Exponential parameter of $\tilde{V}$ (see \Cref{lemma:V_i_bound}). We combine these estimators to define our robust estimator. We will prove error rate controls for each estimator independently first, and then combine them into a single rate for the robust estimator.

In order to simplify the notations and proofs, and with apologies for any confusion to the reader we refer to the total sample size as $2N$ for the remainder of this section, assuming that the sample size is even. We digress here to note why this assumption is reasonable. If $N$ is odd, then we can add one sample $X_{N+1} = \mathbf{0}_n$, which we consider a corrupted sample. Then our new corruption rate is at most $\frac{\varepsilon N + 1}{N + 1} = \varepsilon\big(\frac{N}{N+1}\big) + \frac{1}{N+1} < \varepsilon+\frac{1}{N}$. Therefore, if the sample size is odd we simply reduce the $\varepsilon$ for which our result holds to $\varepsilon < 1/32 - 1/(N+1)$. Due to our sample size condition ($N \geq \sup_\delta \log \mkloc(\delta,2c)$) this adjustment is negligible.

Before we can introduce our tests, we discuss the trimmed mean estimator
$\mathrm{TM}_{\delta_0}(\{\tilde{V}_i\}_{i=1}^{2N})$. We omit discussion of the bound calculation and instead refer the reader to \textcite{lugosi_robust_2021} for details. The trimmed mean estimator is a robust estimator which takes into account both the level of corruption $\varepsilon$ and a type I error bound $\delta_0 \asymp \expfunc{-N\delta^2}$ to be defined explicitly later. This estimator has the key property that with probability $1-\delta_0$ $|\mathrm{TM}_{\delta_0} - \E \tilde{V}_i|$ has a desirable bound (see \eqref{eq:tm_bound1}). Therefore $\E\tilde{V}_i \leq 0$ when $\Sigma$ is close to $\Sigma_1$, so the trimmed mean test can rely on whether $\mathrm{TM}(\{\tilde{V}_i\}_{i=1}^{2N})$ is positive or negative.

Essentially, the trimmed mean estimator is a two-step process. First, the corruption level $\varepsilon$, confidence level $\delta_0$ and half of the data $\left(\textrm{without loss of generality }\{V_i\}_{i=1}^{N}\right)$ are used to set cutoffs $\alpha$ and $\beta$. Second, the trimmed mean estimator is calculated from the remaining data 
\begin{align*}
    \mathrm{TM}_{\delta_0}(\{V_i\}_{i=1}^{2N}) = \frac{1}{N} \sum_{i=N+1}^{2N} \psi_{\alpha,\beta}(V_i)
\end{align*} for 
\begin{align*}
    \psi_{\alpha,\beta}(x) &= \begin{cases}
        \beta & x > \beta, \\ x & x \in [\alpha,\beta] \\ \alpha & x < \alpha.
    \end{cases}
\end{align*}

We now formally introduce our two tests. Notice that two equivalent definitions of the test estimator are given, one in terms of the $X_i$s and one in terms of the $V_i$s. Note that while we refer to this as the median test, this is an abuse of notation as we do not average the two middle observations. %Instead, when we refer to the median what we are actually referring to is the $N^{th}$ order statistic.

\begin{definition}[Median test]\label{definition:median_test}
    Given the data $\{X_i\}_{i=1}^{2N}$ (or $\{V_i\}_{i=1}^{2N}$), the median test returns:
    \begin{align*}
        \psi_\mathrm{Med}(X_i) &\coloneqq \indicator{\operatorname{card}\left(\left\{ i \in [2N] : \|X_iX_i\T - \Sigma_1\|_F \geq \|X_iX_i\T - \Sigma_2\|_F \right\}\right) \geq N}
    \end{align*}
\end{definition}

%{\color{red} Striclty speaking this is not a median right? Median of $2N$ numbers is the average of the two middle observations not the the previous definition... so you have to say you are abusing notation here to redefine median to something else? Did Tuorui do that also?}
%{\color{blue} I added a note, but I also changed anywhere we refer to it as the median outside the name "median test" to be the explictly correct version listed above. The explanation is similar to Tuorui's. Do you think we should get rid of the word median entirely and rename this to something like "order statistic test"? On the one hand, that's kind of vague and though median is technically incorrect it gets at the spirit of the test more precisely. On the other hand, as you noted, it's not a median...}
%{\color{red} maybe we can just call the $N$-th order statistic ``lower median'' which sounds like the proper name for this, and keep the test called median test. then we don't have to call that abuse of notation even.}

\begin{definition}[Trimmed mean test]\label{definition:trimmed_mean_test}
    Given the data $\{X_i\}_{i=1}^{2N}$ or $\{V_i\}_{i=1}^{2N}$, the trimmed mean test returns 1 if the trimmed mean of the $V_i$ is positive:
    \begin{align*}
        \psi_{\operatorname{TM}_{\delta_0}}(\{V_i\}_{i=1}^{2N}) &\coloneqq \indicator{\mathrm{TM}_{\delta_0}(\{V_i\}_{i=1}^{2N}) > 0}
    \end{align*}
\end{definition}

We take this opportunity to state that which will be later proven: we can only prove tail bounds for the median test when $\deltafunc$ is large or for the trimmed mean test when $\deltafunc$ is small. These two tests thus form a good basis for a combined test which chooses between the two sub-tests based on the value of $\deltafunc$.

\begin{definition}[Combined test]\label[definition]{definition:combined_test}
    Given the data $\{X_i\}_{i=1}^{2N}$ or $\{V_i\}_{i=1}^{2N}$ where $X = {X_1,...,X_{2N}}$ for $X_i = \mu + \xi_i$ and a pair of points $\Sigma_1,\Sigma_2 \in \R^{n\times n}$ such that $\|\Sigma_1 - \Sigma_2\|_F \geq C\delta$ for large enough constant $C > 2$, the combined robust test is
    \begin{align*}
        \psi_{\Sigma_1,\Sigma_2,\delta;\delta_0,D_4}(\{X_i\}_{i=1}^{2N}) = \begin{cases}
            \psi_{\mathrm{TM}_{\delta_0}}(\{V_i\}_{i=1}^{2N}) & \deltafunc \leq \thenewerboundary \\ 
            \psi_{\mathrm{Med}}(\{V_i\}_{i=1}^{2N}) & \deltafunc > \thenewerboundary
        \end{cases}
    \end{align*}
\end{definition}

When the context is clear, we simplify the notation for the combined test to $\psi_{\Sigma_1,\Sigma_2}$. To formalize the intuition of the combined test we define the following notation:

\begin{definition}
    Given a pair of points $\Sigma_1,\Sigma_2 \in \R^{n\times n}$, if $\psi_{\Sigma_1,\Sigma_2} = 0$ we say $\Sigma_1$ dominates $\Sigma_2$ and write $\Sigma_1 \succ \Sigma_2$. If the reverse is true we say $\Sigma_2$ dominates $\Sigma_1$ and write $\Sigma_1 \prec \Sigma_2$. 
\end{definition}

Note that the intuitive understanding of the preceding definition is that if $\Sigma_1 \succ \Sigma_2$ then $\Sigma_1$ is understood to better fit the data than $\Sigma_2$. Before we can prove the error bounds for these tests we must state one final definition and its corresponding algorithm.

\begin{definition}[Tournament operator]{\label[definition]{definition:tournament_T}}
    Let $S = \{\Sigma_1,...,\Sigma_M\} \subseteq K$ with cardinality $M$. For any $C>2$ denote $\bar{S} \coloneqq \left\{\Sigma^\prime \in S: \Sigma^\prime \succ \Sigma \textrm{ and } \|\Sigma - \Sigma^\prime\| \geq C\delta \right\}$.  Then we define the tournament operator as 
    \begin{align*}
        T(\delta,\Sigma,S) \coloneqq \begin{cases}
            \max\limits_{\Sigma^\prime \in \bar{S}} \|\Sigma - \Sigma^\prime\| & \bar{S} \neq \emptyset
            \\ 0 & \textrm{otherwise}
        \end{cases}
    \end{align*}
\end{definition}

Thus, the tournament operator gives the maximal distance between the point $\Sigma \in S$ and the points in $S \setminus B(\Sigma,C\delta)$ which dominate $\Sigma$. Therefore $\argmin_{\Sigma \in S} T(\delta,\Sigma,S)$ is the $\Sigma \in S$ with the smallest worst-case distance from the ``better" points. We utilize this operator in Algorithm \ref{algorithm:robust_algorithm}. Intuitively, in Algorithm \ref{algorithm:robust_algorithm} we work our way down the tree generated by Algorithm \ref{algorithm:tree_construction}, at each level we select the next node using the tournament operator with $S$ corresponding to the offspring of the current node. That is, we select the next step in our path as the offspring node with the smallest worst-case distance from all dominating points in the set of offspring nodes. Thus, Algorithm \ref{algorithm:robust_algorithm} generates a path along the tree generated by Algorithm \ref{algorithm:tree_construction}. At this point, the reader may ask why such a tournament operator is necessary---can't we just pick the point which is dominated by all others? The answer is, of course, no: the domination operation is not transitive. It may always be possible to find points such that $\Sigma_1 \prec \Sigma_2 \prec \Sigma_3 \prec \Sigma_1$. 

\begin{algorithm}[htbp]
    \renewcommand{\algorithmicrequire}{\textbf{Input:}}
    \renewcommand{\algorithmicensure}{\textbf{Output:}}
    \caption{Robust Tournament Algorithm for an Upper Bound}
    \label{algorithm:robust_algorithm}

    \begin{algorithmic}[1]
        \REQUIRE Directed tree $ \mathcal{G} $ with root $ \mathcal{Y}_1 $;
        \ENSURE A path $ \mathcal{Y}=[\mathcal{Y}_1,\mathcal{Y}_2,\ldots] $ in $ \mathcal{G} $;

        \STATE Initialize $ J=1 $ and $ \mathcal{Y} = [\mathcal{Y}_1] $;
        \FOR{$ J=1,2,\ldots $}
            \STATE $ \mathcal{Y}_{J+1}\leftarrow \mathop{ \argmin }\limits_{\Sigma \in \mathcal{O}(\mathcal{Y}_J)} T\left( \frac{d}{2^J(C+1)}, \Sigma ,\, \mathcal{O}(\mathcal{Y}_J) \right) $, breaking ties lexicographically;
            \STATE $ \mathcal{Y}.\text{append}(\mathcal{Y}_{J+1}) $;
        \ENDFOR
        \RETURN $ \mathcal{Y}=[\mathcal{Y}_1,\mathcal{Y}_2,\ldots] $.
    \end{algorithmic}
\end{algorithm}

\subsection{Estimation error bounds}

Now, with our algorithm fully explained, we can provide upper bounds for the error of the estimator generated by the algorithm. These bounds will be (nearly) the same as the lower bound proved previously, resulting in an minimax error rate $\left.\vphantom{\log^2}(\right.$up to a small factor of $\left.\log^2(1/\varepsilon)\right)$. Before we begin, let us provide a brief roadmap for how we will proceed. First, we will prove probabilistic bounds on the type I error rate for each of the median and trimmed mean tests. Then, we will use these to prove a bound on the type I error of the combined robust test. Finally, we can use a union-bound argument to provide a probabilistic bound on the error rate of the estimator generated by running the tournament described in \Cref{subsec:algorithm}. 

We begin by showing that the uncorrupted data $\tilde{V}_i$ is sub-exponential. This property is the key to proving the upper bound of the trimmed mean estimator.

\begin{lemma}[Sub-exponential property of $\tilde{V}_i$]\label[lemma]{lemma:V_i_bound}
    For the uncorrupted test data $\tilde{V}_i - \E\tilde{V}_i$ is sub-Exponential with constant parameter. Furthermore if the true variance $\Sigma \in B(\Sigma_1,\delta)$ and \\$\|\Sigma_1-\Sigma_2\|_F \geq C\delta$ then for $\hat{v} \coloneqq \frac{\operatorname{Vec}(\Sigma_2 -\Sigma_1)}{\|\Sigma_2-\Sigma_1\|_F}$
    \begin{align*}
        \tilde{V}_i \leq 2\hat{v}^\top\operatorname{Vec}(\tilde{\xi}_i\tilde{\xi}_i^\top - \Sigma) - (C-2)\delta.
    \end{align*}
\end{lemma}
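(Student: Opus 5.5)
Write $A_i \coloneqq \tilde{\xi}_i\tilde{\xi}_i^\top$. The plan is to expand the difference of squared Frobenius norms in \eqref{equation:definition_V_i}, which turns $\tilde V_i$ into an affine function of $A_i$:
\begin{align*}
\|A_i-\Sigma_1\|_F^2-\|A_i-\Sigma_2\|_F^2 = 2\langle A_i,\Sigma_2-\Sigma_1\rangle + \|\Sigma_1\|_F^2-\|\Sigma_2\|_F^2 .
\end{align*}
Dividing by $\|\Sigma_2-\Sigma_1\|_F$ gives
\begin{align*}
\tilde V_i = 2\hat v^\top \operatorname{Vec}(A_i) + \frac{\|\Sigma_1\|_F^2-\|\Sigma_2\|_F^2}{\|\Sigma_2-\Sigma_1\|_F}.
\end{align*}
Since $\E A_i=\Sigma$, the centered variable is
\begin{align*}
\tilde V_i-\E\tilde V_i = 2\hat v^\top\operatorname{Vec}(A_i-\Sigma) = 2\,\tilde\xi_i^\top \hat V\tilde\xi_i - 2\tr(\hat V\Sigma),
\end{align*}
where $\hat V$ is the matrix form of $\hat v$. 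We may take $\hat V$ symmetric, because $\Sigma_1,\Sigma_2$ are symmetric, and it satisfies $\|\hat V\|_F=1$.

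\textbf{Sub-exponential part.} Write $\tilde\xi_i=\Sigma^{1/2}g$ with $g\sim N(0,\mathbb{I}_n)$. Then the centered quantity equals $2(g^\top B g-\tr B)$ with $B=\Sigma^{1/2}\hat V\Sigma^{1/2}$. Diagonalizing $B=\sum_k\lambda_k u_ku_k^\top$, this becomes $2\sum_k\lambda_k(z_k^2-1)$ with $z_k$ i.i.d.\ $N(0,1)$. Each $z_k^2-1$ is sub-exponential with absolute parameters, so the sum is sub-exponential with parameters $(\nu,b)\asymp(\|B\|_F,\|B\|_2)$. This follows either from Hanson--Wright or from a direct MGF bound valid for $|t|\le 1/(4\|B\|_2)$. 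Condition \ref{cond:K_set} then gives
\begin{align*}
\|B\|_F\le \|\Sigma\|_2\|\hat V\|_F\le c_1, \qquad \|B\|_2\le \|B\|_F\le c_1,
\end{align*}
so both parameters are constants, independent of $n$, $\Sigma_1$ and $\Sigma_2$. This is the key point: the normalization by $\|\Sigma_2-\Sigma_1\|_F$ is exactly what makes the parameter dimension-free.

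\textbf{Deterministic inequality.} Use the identity
\begin{align*}
\|\Sigma_1\|_F^2-\|\Sigma_2\|_F^2 = -\langle \Sigma_2-\Sigma_1,\Sigma_2+\Sigma_1\rangle .
\end{align*}
Adding and subtracting $2\hat v^\top\operatorname{Vec}(\Sigma)$ then gives
\begin{align*}
\tilde V_i = 2\hat v^\top\operatorname{Vec}(A_i-\Sigma) + \hat v^\top\operatorname{Vec}(2\Sigma-\Sigma_1-\Sigma_2).
\end{align*}
Write
\begin{align*}
2\Sigma-\Sigma_1-\Sigma_2 = 2(\Sigma-\Sigma_1) - (\Sigma_2-\Sigma_1).
\end{align*}
The second piece contributes $-\|\Sigma_2-\Sigma_1\|_F\le -C\delta$. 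By Cauchy--Schwarz and $\Sigma\in B(\Sigma_1,\delta)$, the first contributes at most $2\|\Sigma-\Sigma_1\|_F\le 2\delta$. Together these give
\begin{align*}
\tilde V_i\le 2\hat v^\top\operatorname{Vec}(\tilde\xi_i\tilde\xi_i^\top-\Sigma)-(C-2)\delta,
\end{align*}
as claimed.

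The only step that needs care is confirming that the sub-exponential parameters are uniform, which rests on $\|\Sigma\|_2\le c_1$. I expect to verify it with the standard chi-square MGF bound
\begin{align*}
\E e^{t(z^2-1)}\le e^{2t^2} \quad\text{for } |t|\le 1/4,
\end{align*}
applied coordinatewise and multiplied over $k$. Everything else is algebra.
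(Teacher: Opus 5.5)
Your proposal is correct and follows essentially the same route as the paper. The paper also whitens $\tilde\xi_i=\Sigma^{1/2}g$, diagonalizes $\Sigma^{1/2}\hat V\Sigma^{1/2}$ into a weighted sum of centered $\chi^2_1$ variables with Frobenius norm at most $c_1$, and uses the same expansion giving $2\langle \hat v,\operatorname{Vec}(\Sigma-\Sigma_1)\rangle-\|\Sigma_2-\Sigma_1\|_F\le -(C-2)\delta$. The only differences are cosmetic: you verify the sub-exponential bound through the explicit $\chi^2$ moment generating function instead of citing Vershynin's Proposition 5.16, and you spell out the Cauchy--Schwarz step that the paper leaves implicit.
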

\begin{delayedproof}{lemma:V_i_bound}
    First, note that 
    \begin{align}
    \tilde{V} - \E \tilde{V} &= \dfrac{ \left\Vert \tilde{\xi}\tilde{\xi}^\top -\Sigma_1 \right\Vert _F^2 - \left\Vert \tilde{\xi}\tilde{\xi}^\top -\Sigma_2 \right\Vert _F^2  }{ \left\Vert \Sigma_2-\Sigma_1 \right\Vert _F} - \E \tilde{V}\nonumber
    \\ &= \frac{2\tr\left(\left(\Sigma_2-\Sigma_1\right)\T\left(\txi\txi\T - \Sigma\right)\right) + \|\Sigma - \Sigma_1\|_F^2 - \|\Sigma - \Sigma_2\|_F^2}{\|\Sigma_2-\Sigma_1\|} - \E \tilde{V}\nonumber
    \\ &= \frac{2\tr\left(\left(\Sigma_2-\Sigma_1\right)\T\left(\txi\txi\T - \Sigma\right)\right)}{\|\Sigma_2-\Sigma_1\|} \nonumber
    \\ &= \frac{2\langle \Sigma^{1/2}(\Sigma_2 - \Sigma_1)\Sigma^{1/2}, (\Sigma^{-1/2} \tilde{\xi} \tilde{\xi}^{\top} \Sigma^{-1/2} - \mathbb{I})\rangle}{\|\Sigma_1 - \Sigma_2\|_F}. \label{eq:v_minus_ev} 
    \end{align} 
     
    Let $O \coloneqq \Sigma^{1/2}(\Sigma_2 - \Sigma_1)\Sigma^{1/2} / \|\Sigma_1 - \Sigma_2\|_F$ and take its eigendecomposition $O = U \Lambda U\T = \sum_i \lambda_iu_iu_i\T$. Then we can rewrite \Cref{eq:v_minus_ev} as $2\langle O, \eta\eta\T - \mathbb{I}\rangle =2 \sum_i \lambda_i((u_i\T \eta)^2 - 1)$ for $\eta = \Sigma^{-1/2} \tilde\xi$. Note that $u_i\T \eta$ are i.i.d. standard normal random variables and
    \[\sum_i \lambda_i^2 = \|O\|^2_F \leq \|\Sigma^{1/2}\|_{op}^2 \left\|\frac{\Sigma_1 - \Sigma_2}{\|\Sigma_1 - \Sigma_2\|_F}\right\|_F^2\|\Sigma^{1/2}\|_{op}^2\leq c_1^2 \lesssim 1.\] 
    Therefore, by \textcite[Proposition 5.16]{vershynin_introduction_2011} $\tilde{V} - \E \tilde{V} = 2\hat{v}\T \operatorname{Vec}\left(\tilde{\xi}_i\tilde{\xi_i}^T - \Sigma\right) =2 \langle O, \eta\eta\T - \mathbb{I}\rangle$ is sub-Exponential of constant order. Furthermore, since $\E \tilde{V}_i$ is a constant, $\tilde{V}_i$ is also sub-Exponential.
    Now, if $\Sigma \in B(\Sigma_1,\delta)$ and $\|\Sigma_1-\Sigma_2\|\geq C\delta$ then 
    \begin{align*}
        \tilde{V} &= \frac{2\tr\left(\left(\Sigma_2-\Sigma_1\right)\T\left(\txi\txi\T - \Sigma\right)\right) + \|\Sigma - \Sigma_1\|_F^2 - \|\Sigma - \Sigma_2\|_F^2}{\|\Sigma_2-\Sigma_1\|_F} 
        \\ &= 2\hat{v}\T\operatorname{Vec}\left(\txi\txi\T - \Sigma\right) + \frac{\|\Sigma - \Sigma_1\|_F^2 - \|\Sigma - \Sigma_2\|_F^2}{\|\Sigma_2 -\Sigma_1\|_F}
        \\ &\leq 2\hat{v}\T\operatorname{Vec}\left(\txi\txi\T - \Sigma\right) - (C-2)\delta
    \end{align*}
\end{delayedproof}

\begin{definition}
    For ease of notation, we define $\Upsilon \coloneqq C_{SE} c_1 \geq  \sup_{\|A\|_F=1}\|\langle A,\tilde\xi\tilde\xi\T-\Sigma\rangle\|_{\psi_1}$ as an upper bound on the sub-Exponential norm. 
\end{definition} 

In \Cref{lemma:D4_bounded} we show that $\Upsilon$ is of constant order. Now we can bound the type I error of the median test and the trimmed mean test.  The proofs of each of these lemmas are given in \Cref{appendix:upper_bound_pfs}.

\begin{lemma}[Trimmed mean test error rate]\label[lemma]{lemma:trimmed_mean}
There exist constants $ C>2, C_1>0,$ and $C_2>0$ such that: if $ \varepsilon \in (0,{1/32}) $, for any $ C\delta$- separated $ \Sigma _1,\Sigma _2\in K $ with $ \Sigma \in B(\Sigma _1,\delta ) $ and with $N^{-1}\vee C^{2}_1\varepsilon^2\log^2(1/\varepsilon) \leq \deltafunc \leq \thenewerboundary $. We have
    \begin{align*}
        \Prob_{\Sigma\in B(\Sigma_1,\delta)} \left( \mathrm{ TM } (\{V_i\}_{i=1}^{2N}) \geq 0  \right) \leq \expfunc{ -C_2N\deltafunc} .
    \end{align*}
\end{lemma}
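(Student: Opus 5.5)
The plan is to reduce the test to a robust mean estimation problem for the scalar variables $\tilde V_i$ and then invoke the trimmed mean guarantee of \textcite{lugosi_robust_2021}.

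\textbf{Step 1: the mean is well separated.} By \Cref{lemma:V_i_bound}, when $\Sigma\in B(\Sigma_1,\delta)$ and $\|\Sigma_1-\Sigma_2\|_F\ge C\delta$, we have $\E\tilde V_i\le -(C-2)\delta$. The key point is that $\E\,\hat v^\top\operatorname{Vec}(\tilde\xi_i\tilde\xi_i^\top-\Sigma)=0$. So it suffices to show that, with probability at least $1-\exp\{-C_2N\delta^2/\Upsilon^2\}$,
\[
|\mathrm{TM}-\E\tilde V_i|<(C-2)\delta .
\]
On that event $\mathrm{TM}<0$, which gives the claim.

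\textbf{Step 2: the trimmed mean bound.} The variables $\tilde V_i$ are i.i.d. They are sub-exponential with $\psi_1$-norm at most $2\Upsilon$, again by \Cref{lemma:V_i_bound}, so their variance satisfies $\sigma^2\lesssim\Upsilon^2$. Only an $\varepsilon$ fraction of the $V_i$ differ from the $\tilde V_i$. The main theorem of \textcite{lugosi_robust_2021} then gives, with probability at least $1-\delta_0$,
\[
|\mathrm{TM}_{\delta_0}-\E\tilde V_i|\lesssim \sigma\sqrt{\varepsilon}+\sigma\sqrt{\log(1/\delta_0)/N},
\]
provided $\varepsilon$ is below a fixed constant (hence $1/32$) and $\log(1/\delta_0)\lesssim N$.

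The $\sqrt{\varepsilon}$ term is too weak for us. I would rederive it using the sub-exponential tails. The trimming contributes a truncation bias of the form $\sup_{|A|\le\varepsilon}\E|\tilde V-\E\tilde V|\mathbbm{1}_A$. For a sub-exponential variable this is $\lesssim \Upsilon\varepsilon\log(1/\varepsilon)$, instead of the variance-only bound $\sigma\sqrt{\varepsilon}$. The result is
\[
|\mathrm{TM}-\E\tilde V|\le C'\Upsilon\Big(\varepsilon\log(1/\varepsilon)+\sqrt{\log(1/\delta_0)/N}\Big).
\]

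\textbf{Step 3: choice of parameters.} Set $\delta_0=\exp\{-C_2N\delta^2/\Upsilon^2\}$, so the second term equals $\sqrt{C_2}\,\delta$. The hypothesis $\delta^2/\Upsilon^2\ge C_1^2\varepsilon^2\log^2(1/\varepsilon)$ gives $\Upsilon\varepsilon\log(1/\varepsilon)\le\delta/C_1$. Hence the total deviation is at most $C'(1/C_1+\sqrt{C_2})\delta$. Choosing $C_1$ large, $C_2$ small and $C$ large makes this less than $(C-2)\delta$.

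The remaining hypotheses of the theorem are handled as follows:
\begin{itemize}
\item $\delta^2/\Upsilon^2\ge N^{-1}$ makes $\delta_0\le e^{-C_2}$ a nontrivial confidence level.
\item $\delta^2/\Upsilon^2\le (4D_4)^{-1}$ keeps $\log(1/\delta_0)\lesssim N$, as the trimmed mean theorem requires for the cutoffs $\alpha,\beta$ computed on the first half of the sample.
\end{itemize}

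\textbf{Main obstacle.} The hard step is replacing $\sqrt{\varepsilon}$ with $\varepsilon\log(1/\varepsilon)$. This requires going into the proof in \textcite{lugosi_robust_2021} rather than quoting its statement.

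First I would redo its quantile step: with high probability, the empirical $\varepsilon$-quantiles $\alpha,\beta$ from the first half lie within $O(\Upsilon\log(1/\varepsilon))$ of the mean, by the sub-exponential tails.

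Second, I would bound:
\begin{itemize}
\item the clipping bias $|\E\psi_{\alpha,\beta}(\tilde V)-\E\tilde V|$, using the tail integral $\int_{\Upsilon\log(1/\varepsilon)}^\infty e^{-t/\Upsilon}\,dt$;
\item the influence of the at most $\varepsilon N$ corrupted points, each clipped to magnitude $O(\Upsilon\log(1/\varepsilon))$.
\end{itemize}
Both are $\lesssim\Upsilon\varepsilon\log(1/\varepsilon)$.

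Finally, a Bernstein bound on the clipped average supplies the $\sqrt{\log(1/\delta_0)/N}$ term. The range term $\Upsilon\log(1/\varepsilon)\log(1/\delta_0)/N$ must also be controlled. It is dominated by $\delta$ once $\log(1/\delta_0)\lesssim N$ and $\varepsilon\log(1/\varepsilon)\lesssim\delta/\Upsilon$.
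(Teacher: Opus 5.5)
Your skeleton matches the paper's proof. The steps are the same: $\E\tilde V_i\le-(C-2)\delta$ from \Cref{lemma:V_i_bound}; the Lugosi--Mendelson trimmed mean with $\delta_0=\exp\{-C_2N\delta^2/\Upsilon^2\}$; a sub-exponential bound of order $\Upsilon\varepsilon\log(1/\varepsilon)$ on the trimming bias, absorbed through $C_1$; and $C$ taken large.

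The difference is in the step you call the main obstacle, which is not actually an obstacle. Theorem~1 of Lugosi and Mendelson is already stated in the fine form
\[
|\mathrm{TM}-m|\le 3\mathcal{E}(4\tilde\varepsilon)+2\sigma_V\sqrt{\log(4/\delta_0)/N},\qquad \tilde\varepsilon=8\varepsilon+12\log(4/\delta_0)/N .
\]
The $\sigma\sqrt{\varepsilon}$ bound you quote is only its variance-only corollary. So the paper never reopens their proof. It applies H\"older, $\mathcal{E}(4\tilde\varepsilon)\le\|\bar V\|_{L^p}(2\tilde\varepsilon)^{1-1/p}$ with $p=\max\{2,\log(1/\tilde\varepsilon)\}$, together with $\|\bar V\|_{L^p}\lesssim\Upsilon p$. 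This gives $3\mathcal{E}(4\tilde\varepsilon)\lesssim\Upsilon\tilde\varepsilon\log(1/\tilde\varepsilon)$.

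The from-scratch version you sketch would not go through as written, because you trim at level $\varepsilon$. There are two problems.
\begin{itemize}
\item Once $\varepsilon N\lesssim\log(1/\delta_0)$, the empirical $\varepsilon$-quantiles cannot be localized at confidence $1-\delta_0$. In addition, a trimming level equal to the corruption fraction leaves the adversary in control of those quantiles.
\item Your range-term claim is false. The term equals $\Upsilon\log(1/\varepsilon)\log(1/\delta_0)/N=C_2\,\delta\cdot(\delta/\Upsilon)\log(1/\varepsilon)$. This is not $O(\delta)$ when, say, $\delta^2/\Upsilon^2=(4D_4)^{-1}$ and $\varepsilon=e^{-N}$, which the hypotheses allow. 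The condition $\varepsilon\log(1/\varepsilon)\lesssim\delta/\Upsilon$ only bounds $\varepsilon$ from above and does not help.
\end{itemize}

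The trimming level has to be $\tilde\varepsilon\gtrsim\varepsilon+\delta^2/\Upsilon^2$. The resulting extra bias $\Upsilon s(\tilde\varepsilon)$, with $s(x)=x\log(1/x)$, is what the paper controls by splitting on $\delta^2/\Upsilon^2\lessgtr\varepsilon$.
\begin{itemize}
\item If $\delta^2/\Upsilon^2<\varepsilon$, then $\tilde\varepsilon\le D_4\varepsilon$, so the bias is at most $D_2D_4\Upsilon\varepsilon\log(1/\varepsilon)$.
\item Otherwise $\tilde\varepsilon\le D_4\delta^2/\Upsilon^2\le 1/4$, and monotonicity of $s$ gives at most $2D_2\sqrt{D_4}\,\delta$.
\end{itemize}

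This also shows the real role of the two-sided hypothesis. The lower bound $\delta^2/\Upsilon^2\ge N^{-1}$ absorbs the $\log 4/N$ term inside $\log(4/\delta_0)/N$; it does more than make $\delta_0$ nontrivial. The upper bound $\delta^2/\Upsilon^2\le(4D_4)^{-1}$, with $D_4=8+12\log 4+12C_2$, gives exactly $\tilde\varepsilon<1/2$ and $\delta_0\ge e^{-N}/4$. These are the hypotheses of Lugosi and Mendelson's Theorem~1.
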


The \proofref{lemma:trimmed_mean} of this lemma is relegated to \Cref{appendix:upper_bound_pfs}.

\newcommand{\newRHS}{\min\left(\frac{\delta^2}{\Upsilon^2}, \frac{\delta}{\Upsilon}\right)}

\begin{lemma}[Median test error rate] \label[lemma]{lemma:median}
    There exist constants $ C>2, C_4 >0$ such that: if $ \varepsilon \in (0,{1/32}) $, for any $ C\delta$- separated $ \Sigma _1,\Sigma _2\in K $ with $ \Sigma \in B(\Sigma _1,\delta ) $ and, $\thenewerboundary \leq \deltafunc$ we have
    \begin{align*}
        \Prob_{\Sigma \in B(\Sigma_1,\delta)} \big(\operatorname{card}\big(\big\{ i \in [2N] : \|X_iX_i\T - \Sigma_1\|_F \geq \|X_iX_i\T - &\Sigma_2\| \big\}\big) \geq N  \big) \\&\leq \expfunc{ -C_4N\newRHS}.
    \end{align*}
\end{lemma}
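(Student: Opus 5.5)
The event in the statement is that at least $N$ of the $2N$ (corrupted) $V_i$ are nonnegative. The plan is to reduce this to a binomial tail for the uncorrupted $\tilde V_i$.

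\textbf{Step 1: reduce to the clean data.} At most $2N\varepsilon \le N/16$ indices are corrupted. Hence the event implies that at least $N - N/16 = \tfrac{15}{16}N$ of the clean values $\tilde V_i$ are nonnegative. Let $p \coloneqq \Prob(\tilde V_i \ge 0)$. If we show $p \le 1/4$, say, then Hoeffding's (or Chernoff's) inequality applied to the i.i.d.\ indicators $\indicator{\tilde V_i \ge 0}$ gives
\begin{align*}
\Prob\Big(\sum_{i=1}^{2N}\indicator{\tilde V_i\ge 0}\ge \tfrac{15}{16}N\Big)\le \exp\{-2N\cdot 2(\tfrac{15}{32}-p)^2\}.
\end{align*}
This is only a constant rate $e^{-cN}$. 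That suffices, because in the regime $\deltafunc \ge \thenewerboundary$ the quantity $\newRHS$ is bounded below by a constant, using that $\Upsilon$ and $D_4$ are of constant order (\Cref{lemma:D4_bounded}).

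The catch is the upper end: $\newRHS$ can grow, since $\delta/\Upsilon$ may be large. For the bound $e^{-C_4 N \delta/\Upsilon}$ we need $p$ to decay like $e^{-c\delta/\Upsilon}$. We then use the sharper Chernoff form of the binomial tail,
\begin{align*}
\Prob(\mathrm{Bin}(2N,p)\ge \tfrac{15}{16}N)\le \binom{2N}{\lceil 15N/16\rceil}p^{15N/16}\le (4^{2}p)^{15N/16}.
\end{align*}
Whenever $p\le e^{-c\delta/\Upsilon}$ with $c\delta/\Upsilon$ exceeding a constant, this yields $\exp\{-C_4N\,\delta/\Upsilon\}$. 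When $\delta/\Upsilon$ is only moderate, the plain Hoeffding bound covers it.

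\textbf{Step 2: bound $p$.} By \Cref{lemma:V_i_bound}, $\tilde V_i \le 2\hat v^\top \operatorname{Vec}(\tilde\xi_i\tilde\xi_i^\top - \Sigma) - (C-2)\delta$, so
\begin{align*}
p\le \Prob\big(2\langle O,\eta\eta^\top-\mathbb{I}\rangle \ge (C-2)\delta\big).
\end{align*}
The variable $\langle O,\eta\eta^\top-\mathbb{I}\rangle$ is mean zero with $\psi_1$-norm at most $\Upsilon$. Bernstein's inequality for sub-exponential variables (Vershynin, Prop.~5.16, with a single summand) gives
\begin{align*}
p\le 2\exp\Big\{-c\min\Big(\tfrac{(C-2)^2\delta^2}{\Upsilon^2},\tfrac{(C-2)\delta}{\Upsilon}\Big)\Big\}.
\end{align*}
Since $\deltafunc\ge \thenewerboundary$, choosing $C$ large enough (depending on $D_4$) makes this at most $1/4$, and in fact at most $e^{-c'\min(\delta^2/\Upsilon^2,\delta/\Upsilon)}$ with the prefactor absorbed.

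\textbf{Step 3: combine.} Plug $p\le e^{-c'm}$, with $m=\newRHS\ge$ const, into the Chernoff/binomial bound from Step 1. Choosing $C$ large so that $16p\le e^{-c'm/2}$ gives probability at most $\exp\{-\tfrac{15}{16}N c' m/2\}=\exp\{-C_4 N m\}$.

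The main obstacle is bookkeeping the constants. $C$ must be taken large relative to $D_4$ and the Bernstein constant so that the combinatorial factor $\binom{2N}{15N/16}\le 4^N$ is absorbed by $p^{15N/16}$ uniformly over the whole range $\deltafunc\ge\thenewerboundary$. A secondary point is checking that the ratio $\delta/\Upsilon$ and the constant-order facts about $\Upsilon$ are used consistently.
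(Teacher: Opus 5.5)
Your proof is correct, and its skeleton is the paper's. You turn the corrupted count into a count over the clean $\tilde V_i$, bound $p=\Prob(\tilde V_i\ge 0)$ via \Cref{lemma:V_i_bound} and the sub-exponential tail, and finish with a binomial tail whose exponent is of order $N\log(1/p)$. In both arguments the combinatorial term is absorbed by taking $C$ large and using $\delta^2/\Upsilon^2\ge (4D_4)^{-1}$.

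The difference is in the final tail bound. The paper keeps a slack $N'\alpha(1-\rho)$ with $\alpha(1-\rho)>\varepsilon$ and uses the exact relative-entropy bound $\exp\{-N'D(p-\zeta\,\|\,p)\}$. It imports from Prasadan--Neykov the estimate $D\ge g(\alpha)+(\tfrac12-\alpha)\log(1/\rho)$ and the surjectivity of $h$, the latter to choose $\alpha$. You instead spend the corruption budget directly ($2N\varepsilon<N/16$) and use $\Prob(\mathrm{Bin}(2N,p)\ge k)\le\binom{2N}{k}p^k\le(16p)^{15N/16}$. This is a cruder form of the same Chernoff bound, with the entropy factor bounded by $4^N$. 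It is self-contained and needs none of $\alpha,\rho,g,h$; the weaker constant it gives is harmless here.

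To make the absorption explicit: for $C\ge 3$ you have $\min((C-2)^2x^2,(C-2)x)\ge(C-2)\min(x^2,x)$, so $p\le 2e^{-c(C-2)m}$. Here $m=\min(\delta^2/\Upsilon^2,\delta/\Upsilon)\ge m_0:=\min((4D_4)^{-1},(4D_4)^{-1/2})$. Taking $C-2\ge 2\log 32/(c\,m_0)$ gives $16p\le e^{-c(C-2)m/2}$, hence $C_4=\tfrac{15}{32}c(C-2)$. Choosing $C$ after $D_4$ is not circular, since $D_4$ depends only on $C_2$.

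Drop the Hoeffding detour in Step 1. Its justification is backwards: a bound $e^{-cN}$ dominates $e^{-C_4Nm}$ only when $m$ is bounded \emph{above}, not below. Your Step 3 bound already covers the whole range $m\ge m_0$ on its own.
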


The \proofref{lemma:median} of this lemma is also relegated to \Cref{appendix:upper_bound_pfs}.

Finally, with type I error rates for our two tests given, we can define an overall hypothesis testing problem using the combined robust test $\psi_{\Sigma_1,\Sigma_2}$ and bound its type I error rate. This is given in \Cref{theorem:two_point_prob_bound}.

\begin{theorem}\label{theorem:two_point_prob_bound}
    There exist constants $C>2$ and $C_5>0$ such that the following type I error rate bound holds. Let $ \varepsilon \in (0,1/32)$ and consider the test
    \begin{align*}
        H_0:\, \Sigma \in B(\Sigma _1,\delta ) \text{ v.s. } H_A:\, \Sigma \in B(\Sigma _2,\delta ) \text{ for } \left\Vert \Sigma _1-\Sigma _2 \right\Vert \geq C\delta 
    \end{align*}
    where $ \Sigma _1,\Sigma _2\in K $. Let $\psi_{\Sigma_1,\Sigma_2}$ be the combined robust test from \Cref{definition:combined_test}. Then if $\delta$ satisfies $\deltafunc \geq N^{-1}\vee C_1^2\epsilon^2\log^2(1/\varepsilon)$ and $N \gtrsim \sup_{\delta > 0} \log \mkloc(\delta,2c)$, we have
    \begin{align*}
        \mathop{ \sup }\limits_{\Sigma :\left\Vert \Sigma-\Sigma _1 \right\Vert \leq \delta } \Prob_\Sigma \left( \psi=1 \right)  \vee\mathop{ \sup }\limits_{\Sigma :\left\Vert \Sigma-\Sigma _2 \right\Vert \leq \delta } \Prob_\Sigma \left( \psi=0 \right) \leq &  \expfunc{ -C_5N\newRHS}
    \end{align*}
\end{theorem}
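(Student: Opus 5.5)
The theorem follows by splitting on which branch of \Cref{definition:combined_test} the combined test uses, invoking the matching single-test lemma, and then handling the alternative hypothesis by symmetry. By definition, $\psi=\psi_{\mathrm{TM}_{\delta_0}}$ when $\deltafunc\leq\thenewerboundary$ and $\psi=\psi_{\mathrm{Med}}$ otherwise. The case split is on this dichotomy, and within each case only the bound under $H_0$, $\Sigma\in B(\Sigma_1,\delta)$, needs to be proved.

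\emph{Trimmed mean regime.} Suppose $N^{-1}\vee C_1^2\varepsilon^2\log^2(1/\varepsilon)\leq\deltafunc\leq\thenewerboundary$. These are exactly the hypotheses of \Cref{lemma:trimmed_mean}. Since $\{\psi=1\}=\{\mathrm{TM}>0\}\subseteq\{\mathrm{TM}\geq 0\}$, we obtain $\Prob_\Sigma(\psi=1)\leq\expfunc{-C_2N\deltafunc}$. In this regime $\delta/\Upsilon\leq (4D_4)^{-1/2}$. I expect $\Upsilon$ and $D_4$ to be constants with $(4D_4)^{-1}\leq 1$; this should follow from \Cref{lemma:D4_bounded}, and if not it can be arranged by enlarging $D_4$. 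Then $\delta/\Upsilon\leq1$, so $\deltafunc=\newRHS$, and the bound has the required form with $C_5\leq C_2$.

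\emph{Median regime.} Suppose $\deltafunc>\thenewerboundary$. The event $\{\psi=1\}$ is precisely the cardinality event in \Cref{lemma:median}. That lemma requires only $\deltafunc\geq\thenewerboundary$, which holds, so $\Prob_\Sigma(\psi=1)\leq\expfunc{-C_4N\newRHS}$.

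\emph{Alternative hypothesis and assembling the constants.} For $\Sigma\in B(\Sigma_2,\delta)$, swap the roles of $\Sigma_1$ and $\Sigma_2$. This replaces $V_i$ by $-V_i$. The trimmed mean test then becomes the test of $\mathrm{TM}$ of the negated data being $\geq 0$; I would check that the trimming thresholds $\alpha,\beta$ transform equivariantly, $\psi_{\alpha,\beta}(-x)=-\psi_{-\beta,-\alpha}(x)$, so \Cref{lemma:trimmed_mean} applies verbatim. The median event $\{\psi=0\}$ says that fewer than $N$ indices favor $\Sigma_2$, i.e.\ more than $N$ indices satisfy the strict reverse inequality. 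This is contained in the symmetric event of \Cref{lemma:median} with the roles exchanged, so that lemma again gives the bound. Finally take $C$ to be the larger of the constants from the two lemmas, and note that both lemmas are monotone in $C$ through the bound of \Cref{lemma:V_i_bound}. Take $C_5=\min(C_2,C_4)$.

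The step I expect to be the main obstacle is the symmetry argument for the trimmed mean test. The tie convention ($>0$ versus $\geq0$), together with the data-dependent cutoffs computed on the first half of the sample, must be matched carefully against the statement of \Cref{lemma:trimmed_mean}. The other piece to verify is the identification of $\deltafunc$ with $\newRHS$, which relies on $(4D_4)^{-1}\leq1$. The condition $N\gtrsim\sup_\delta\log\mkloc(\delta,2c)$ is not needed for the two-point bound itself; it is carried along only for the later union bound.
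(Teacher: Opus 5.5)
Your proposal is correct and follows the paper's proof essentially verbatim. You split on the branch of the combined test, invoke the trimmed-mean lemma (via $\{\mathrm{TM}>0\}\subseteq\{\mathrm{TM}\geq 0\}$) or the median lemma, take $C_5=C_2\wedge C_4$, and handle $\Sigma\in B(\Sigma_2,\delta)$ by symmetry, which the paper simply calls analogous. The one superfluous step is identifying $\delta^2/\Upsilon^2$ with $\min(\delta^2/\Upsilon^2,\delta/\Upsilon)$ in the trimmed-mean regime: since $\min(a^2,a)\le a^2$ for every $a>0$, the inequality $\exp\{-C_2N\delta^2/\Upsilon^2\}\le\exp\{-C_5N\min(\delta^2/\Upsilon^2,\delta/\Upsilon)\}$ holds with no condition on $D_4$ (and in any case $D_4\ge 8$ by its definition).
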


\begin{proof}
    Let $ C, C_1$, and $C_2$ be the universal constants from \Cref{lemma:trimmed_mean}, $ C_4 $ from \Cref{lemma:median}, $D_5$ from \Cref{lemma:useful_constants} and let $C_5 \coloneqq C_2 \wedge C_4$.
    
    We use the robust combined estimator $ \psi= \psi_{\Sigma _1,\Sigma _2, \delta ;\delta _0,C_2} $ defined in \Cref{definition:combined_test}, substituting in the corresponding constants. We break the probability bounds into two cases based on the value of $\deltafunc$, as discussed previously.
    \begin{itemize}
        \item If $\deltafunc \leq \thenewerboundary$ then $\deltafunc \geq N^{-1}\vee C_1^2\epsilon^2\log^2(1/\varepsilon)$ ensures that \Cref{lemma:trimmed_mean} is applicable. Then 
        \begin{align*}
            \Prob_{\Sigma \in B(\Sigma_1,\delta)}\left(TM(\{V_i\}_{i=1}^{2N}\geq 0\right) &\leq \expfunc{-C_2N\deltafunc}
            \\ &\leq \expfunc{-C_5 N \newRHS}.
        \end{align*}
        \item If $\deltafunc > \thenewerboundary$ then by \Cref{lemma:median}
        \begin{align*}
            \Prob_{\Sigma\in B(\Sigma_1,\delta)} \left(\mathrm{ Median }\big( \{V_i\}_{i=1}^{2N} \big) \geq 0 \right) &\leq \expfunc{-C_4 N\newRHS}
            \\ &\leq \expfunc{-C_5 N \newRHS}
        \end{align*}
    \end{itemize}
    First notice that the two cases have the same bound and thus can be combined. Also, the case of $\Sigma \in B(\Sigma_2,\delta)$ is analogous.  Thus
    \begin{align*}
        \mathop{ \sup }\limits_{\Sigma :\left\Vert \Sigma-\Sigma _1 \right\Vert \leq \delta } \Prob_\Sigma \left( \psi=1 \right)  \vee\mathop{ \sup }\limits_{\Sigma :\left\Vert \Sigma-\Sigma _2 \right\Vert \leq \delta } \Prob_\Sigma \left( \psi=0 \right) \leq &  \expfunc{-C_5N \newRHS}
    \end{align*}
\end{proof}

Now that we have established a type-I error probability bound, we can use the union bound to bound the error for the winner of the tournament \Cref{algorithm:robust_algorithm}.  

\begin{theorem} \label{theorem:covering_bound}
    Let $ C,C_1,$ and $C_5$ be the universal constants from \Cref{theorem:two_point_prob_bound} and \\ $ S=\{\Sigma _1,\ldots,\Sigma _M\} $ be a $ \delta  $-covering of $ K'\subset K $ which has size $M$. Consider the tournament $ T(\delta ,\Sigma ,S) $ from \Cref{definition:tournament_T}. Assume $ \Sigma \in K' $ and denote $ i^*\in \mathop{ \arg\min }\limits_{i\in [M]} T(\delta ,\Sigma _i,S)  $. Also assume $ \frac{\delta^2}{\vVar^2} \geq N^{-1}\vee C_1^2\varepsilon^2\log^2(1/\varepsilon)  $, then
    \begin{align*}
        \Prob\left( \left\Vert \Sigma _{i^*} - \Sigma  \right\Vert \geq (C+1)\delta  \right) \leq M\expfunc{-C_5N\newRHS} 
    \end{align*}
\end{theorem}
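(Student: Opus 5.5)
Since $S$ is a $\delta$-covering of $K'$ and $\Sigma \in K'$, we fix an index $j$ with $\|\Sigma_j - \Sigma\| \leq \delta$. The plan is to show that, on a single good event, $T(\delta,\Sigma_j,S)=0$, and that any index with small tournament value must lie within $(C+1)\delta$ of $\Sigma$. We define the good event $\mathcal{E}$ as follows: for every $k \in [M]$ with $\|\Sigma_j - \Sigma_k\| \geq C\delta$, the combined test $\psi_{\Sigma_j,\Sigma_k}$ returns $0$, so that $\Sigma_j \succ \Sigma_k$. For each such $k$ we apply \Cref{theorem:two_point_prob_bound} with $\Sigma_1 = \Sigma_j$ and $\Sigma_2 = \Sigma_k$. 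Its hypotheses hold: the pair is $C\delta$-separated, $\Sigma \in B(\Sigma_j,\delta)$, and the assumption $\deltafunc \geq N^{-1}\vee C_1^2\varepsilon^2\log^2(1/\varepsilon)$ together with the standing sample size condition are given. Each individual failure therefore has probability at most $\expfunc{-C_5N\newRHS}$. A union bound over at most $M-1 \leq M$ competitors gives $\Prob(\mathcal{E}^c) \leq M\expfunc{-C_5N\newRHS}$.

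Next we argue deterministically on $\mathcal{E}$. No $\Sigma' \in S$ with $\|\Sigma_j - \Sigma'\| \geq C\delta$ dominates $\Sigma_j$, so the set $\bar S$ in \Cref{definition:tournament_T} is empty for $\Sigma_j$ and $T(\delta,\Sigma_j,S)=0$. Hence the minimizer $i^*$ satisfies $T(\delta,\Sigma_{i^*},S) \le 0$, i.e.\ $T(\delta,\Sigma_{i^*},S)=0$. We now suppose, for contradiction, that $\|\Sigma_{i^*} - \Sigma\| \geq (C+1)\delta$. The triangle inequality gives $\|\Sigma_{i^*} - \Sigma_j\| \geq (C+1)\delta - \delta = C\delta$. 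So $\Sigma_{i^*}$ is one of the competitors covered by $\mathcal{E}$, and therefore $\Sigma_j \succ \Sigma_{i^*}$. This puts $\Sigma_j$ in $\bar S$ for the point $\Sigma_{i^*}$, so $T(\delta,\Sigma_{i^*},S) \geq \|\Sigma_{i^*}-\Sigma_j\| \geq C\delta > 0$, a contradiction. Thus on $\mathcal{E}$ we have $\|\Sigma_{i^*}-\Sigma\| < (C+1)\delta$, which proves the claimed bound.

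The main subtlety I expect is the orientation of domination and the symmetry of the test. We need $\psi_{\Sigma_j,\Sigma_{i^*}}$ and $\psi_{\Sigma_{i^*},\Sigma_j}$ to express the same comparison, or else we must make sure the bound of \Cref{theorem:two_point_prob_bound} covers whichever ordered pair the tournament actually evaluates. \Cref{theorem:two_point_prob_bound} controls both error types, namely $\Prob(\psi=1)$ under $H_0$ and $\Prob(\psi=0)$ under $H_A$. I would therefore include in $\mathcal{E}$ both orderings $(\Sigma_j,\Sigma_k)$ and $(\Sigma_k,\Sigma_j)$, using the second error type for the latter. This at most doubles the union bound, and the factor $2$ can be absorbed by adjusting $C_5$ or by noting that the test is antisymmetric up to ties. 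A second, minor point is that the test threshold in \Cref{definition:combined_test} depends only on $\delta$ and not on the pair, so the same $\delta$ is used uniformly across all comparisons.
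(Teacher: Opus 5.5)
Your proof is correct and is essentially the paper's argument. The paper fixes the covering point nearest $\Sigma$ and notes that $\|\Sigma_{i^*}-\Sigma_1\|\ge C\delta$ forces $T(\delta,\Sigma_1,S)\ge C\delta$, which is exactly your event $\mathcal{E}^c$; it then union-bounds over the $M-1$ competitors via \Cref{theorem:two_point_prob_bound} and finishes with the triangle inequality.

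Your orientation worry is already resolved by the paper's definition of $\succ$. That definition decides each pair by a single test, so exactly one of $\Sigma_1\succ\Sigma_2$ and $\Sigma_2\succ\Sigma_1$ holds, and no doubling of the union bound is needed. Drop the ``adjust $C_5$'' option: the statement pins $C_5$ to the constant of \Cref{theorem:two_point_prob_bound}, and a factor $2(M-1)$ would exceed the stated $M$ once $M\ge 3$.
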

\begin{proof}
    Since we have that $ S=\{\Sigma _i\}_{i=1}^M $ is a $\delta$-covering of $ K'\ni \Sigma  $, we can without loss of generality assume $ \Sigma _1 $ be the point in the covering closest to $ \Sigma  $ and thus $ \Sigma \in B(\Sigma _1, \delta ) $. Denote $ T_i:= T(\delta ,\Sigma _i,S) $. We have by definition of $ T(\, \cdot \, ) $ that 
    \begin{align*}
        \indicator{\left\Vert \Sigma _{i^*} -\Sigma _1 \right\Vert \geq C\delta } \leq \indicator{\max\{ T_{i^*},T_1 \} \geq C\delta } \leq \indicator{T_1 \geq C\delta } .
    \end{align*}
    Therefore, if $T_1 \geq C\delta$ then $\exists\Sigma_j$ such that $\Sigma_j \succ \Sigma_1$. Because $\frac{\delta^2}{\vVar^2} \geq N^{-1} \vee C_1^2\varepsilon^2\log^2(1/\varepsilon)$, we can apply \Cref{theorem:two_point_prob_bound}. Then, applying the union bound to the $M-1$ possible $\Sigma_j$
    \begin{align*}
        \Prob_\Sigma(\|\Sigma_{i^\star} - \Sigma_1\| \geq C\delta) &\leq \Prob_\Sigma(T_1 \geq C \delta) \leq (M-1) \expfunc{-C_5 N \newRHS},
    \end{align*}
    and by the triangle inequality 
    \begin{align*}
        \Prob_\Sigma(\|\Sigma_{i^\star} - \Sigma\| \geq (C+1) \delta) \leq M\expfunc{-C_5 N \newRHS}.
    \end{align*}
\end{proof}

\begin{theorem} \label{theorem:upper_bound}
    Let $C$ and $C_5$ be the universal constants from \Cref{theorem:two_point_prob_bound}. Set {$c=2(C+1)$} as the constant in the local metric entropy $\mkloc(\delta,c)$ and $\mathcal{Y}$ as the winning path from \Cref{algorithm:robust_algorithm}. Define $\delta_J \coloneqq \frac{d}{2^{J-1}(C+1)}$  and let $J^\star \geq 1$ be the largest $J$ such that 
    \begin{align}
        C_5 N \min\left(\frac{\delta_J^2}{\Upsilon^2},\frac{\delta_J}{\Upsilon}\right) \geq 4\log\mkloc(c\delta_J,2c)\bigvee \log 2. \label{eq:2log_lower_bound}
    \end{align}
    If this condition is never satisfied set $J^\star = 1$. Denote the output after some step $J^{\star\star} \geq J^\star$ as $\Sigma^{\star\star} = \mathcal{Y}_{J^{\star\star}+1}$. Finally, assume that $N \gtrsim \sup_{\delta > 0} \log \mkloc(\delta,2c)$.
    \\ Then
    \begin{align}
        \E_\xi\|\Sigma^{\star\star} - \Sigma\|_F^2 \lesssim \left(\delta^2_{J^\star}\vee\varepsilon^2\log^2(1/\varepsilon)\right) \bigwedge d^2 \label{eq:upper_bound}
    \end{align}
\end{theorem}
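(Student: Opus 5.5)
We follow the layered union-bound argument of \textcite{prasadan_information_2025}, with \Cref{theorem:covering_bound} applied at each layer of the tree. The trivial part comes first. Every node of $\mathcal{G}$ lies in $K$, and $\Sigma\in K$, so $\|\Sigma^{\star\star}-\Sigma\|_F\le d$ deterministically. This gives the $\wedge d^2$, and it suffices to prove the bound $\delta_{J^\star}^2\vee\varepsilon^2\log^2(1/\varepsilon)$ up to constants. We then define the effective stopping level $\bar J\le J^\star$ as the largest $J\le J^\star$ with $\delta_J^2/\Upsilon^2\ge N^{-1}\vee C_1^2\varepsilon^2\log^2(1/\varepsilon)$, so that \Cref{theorem:two_point_prob_bound} applies at every radius $\delta_J$ with $J\le\bar J$. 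If $\bar J<J^\star$, then $\delta_{\bar J}\lesssim \Upsilon(\varepsilon\log(1/\varepsilon)\vee N^{-1/2})$, and $N^{-1/2}$ is absorbed into $\delta_{J^\star}$ because condition \eqref{eq:2log_lower_bound} essentially forces $\delta_{J^\star}^2\gtrsim \Upsilon^2/N$. Since $\Upsilon$ is of constant order by \Cref{lemma:D4_bounded}, it suffices to show that the path stays within $O(\delta_{\bar J})$ of $\Sigma$ up to layer $\bar J$.

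The core is an induction along the path. We aim to show that, on a good event, $\Sigma\in B(\mathcal{Y}_J, d/2^{J-1})$ for every $J\le\bar J+1$. At $J=1$ this holds trivially. Suppose it holds at $J$. By \Cref{lemma:tree_properties}(ii), the offspring $\mathcal{O}(\mathcal{Y}_J)$ form a covering of $B(\mathcal{Y}_J,d/2^{J-1})\cap K$ at scale $\frac{d}{2^{J-1}c}=\delta_J/2$ with $c=2(C+1)$, and their cardinality is at most $\mkloc(d/2^{J-1},2c)=\mkloc(c\delta_J,2c)$ (up to index shifting by one, which we reconcile with the algorithm's radius $d/(2^J(C+1))$). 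We apply \Cref{theorem:covering_bound} with $K'=B(\mathcal{Y}_J,d/2^{J-1})\cap K$ and radius $\delta_J$. Outside an event of probability at most $\mkloc(c\delta_J,2c)\exp\{-C_5N\min(\delta_J^2/\Upsilon^2,\delta_J/\Upsilon)\}$, the tournament winner $\mathcal{Y}_{J+1}$ satisfies $\|\mathcal{Y}_{J+1}-\Sigma\|\le (C+1)\delta_J=d/2^{J-1}$. The radius bookkeeping has to close the induction with radius $d/2^J$. We intend to absorb the mismatch by choosing the tournament radius as in \Cref{algorithm:robust_algorithm}, so that $(C+1)\cdot\frac{d}{2^J(C+1)}=d/2^J$. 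This is the step where care with constants is needed.

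For the probability bound, observe that $\min(\delta^2,\delta)$ is increasing in $\delta$ while $\mkloc$ is nonincreasing in $\delta$ by \Cref{lemma:loc_metric_entropy_noninc}. Hence \eqref{eq:2log_lower_bound} holding at $J^\star$ implies that it holds at all $J\le J^\star$. At such $J$ the per-layer failure probability is at most $\exp\{-\tfrac34 C_5N\min(\cdot)\}$. Because $\delta_J$ doubles as $J$ decreases, these terms decay geometrically in $J^\star-J$, and the total failure probability is bounded by a constant multiple of $p:=\exp\{-\tfrac34 C_5N\min(\delta_{J^\star}^2/\Upsilon^2,\delta_{J^\star}/\Upsilon)\}$.

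On the good event, we combine \Cref{lemma:tree_properties}\ref{tree_property_4} with the induction. This gives $\|\Sigma^{\star\star}-\Sigma\|\le\|\Sigma^{\star\star}-\mathcal{Y}_{\bar J+1}\|+\|\mathcal{Y}_{\bar J+1}-\Sigma\|\lesssim d/2^{\bar J}\asymp\delta_{\bar J}$, since the later path steps after $\bar J$ move at most $O(d/2^{\bar J})$ regardless of which tests fail. On the bad event we use the bound $d^2$. Therefore
\[
\E\|\Sigma^{\star\star}-\Sigma\|^2\lesssim \delta_{\bar J}^2+d^2 p.
\]
The main obstacle is controlling $d^2p$. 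Because $d=2^{J^\star-1}(C+1)\delta_{J^\star}$, we need $p\lesssim 4^{-J^\star}$. To get this we first use the fact that $J^\star$ is maximal: failure of \eqref{eq:2log_lower_bound} at $J^\star+1$ together with the line segment of length $d/3$ from \Cref{lemma:d_line_segment} yields $\log\mkloc(c\delta_J,2c)\gtrsim 1$ along the segment. We then use that the exponent in $p$ is at least $3\log 2$ plus the $\log\mkloc$ term, and if needed we invoke the sample-size assumption $N\gtrsim\sup_\delta\log\mkloc(\delta,2c)$ to dominate $J^\star\log 4$. Failing that, we would sum $\delta_J^{-2}$-weighted tail contributions layer by layer, showing that the failure at the first bad layer $J$ costs only $(d/2^{J})^2 e^{-cN\min(\cdot)}$. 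This sum is geometric and bounded by $\delta_{J^\star}^2$, which is the cleaner route and the one I would try first.
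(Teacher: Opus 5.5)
Your architecture matches the paper's. The layer-by-layer union bound through \Cref{theorem:covering_bound}, with the entropy absorbed by \eqref{eq:2log_lower_bound}, is the paper's \Cref{lemma:bound_yj_sigma}. Your $\bar J$ plays the role of $J^\dagger$, and the $\Upsilon^2/N$ and $\varepsilon\log(1/\varepsilon)$ terms are absorbed as in \Cref{lemma:delta_J_123_rates}.

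However, your main line, $\E\|\Sigma^{\star\star}-\Sigma\|_F^2\lesssim\delta_{\bar J}^2+d^2p$, is a dead end, and your first repair fails. Nothing forces $N\min(\delta_{J^\star}^2/\Upsilon^2,\delta_{J^\star}/\Upsilon)\gtrsim J^\star$. Take $K$ to be a line segment. Then $\log\mkloc(\cdot,2c)\lesssim\log c$, so maximality of $J^\star$ gives $N\delta_{J^\star}^2/\Upsilon^2\asymp 1$ and $p$ is a fixed positive constant. Meanwhile $J^\star\asymp\log_2(d\sqrt N/\Upsilon)\to\infty$, so $d^2p\asymp d^2$ rather than $O(1/N)$. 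The hypothesis $N\gtrsim\sup_\delta\log\mkloc(\delta,2c)$ holds trivially here and does not help.

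You must take your second route, which is exactly the paper's \Cref{lemma:sigma_star_star_bound}. On the event $\{\|\mathcal{Y}_J-\Sigma\|_F\le d/2^{J-1}\}$, property (iv) of \Cref{lemma:tree_properties} places every descendant of $\mathcal{Y}_J$, including $\Sigma^{\star\star}$, within $O(d/2^J)$ of $\Sigma$ deterministically. This gives the multi-scale tail bound $\Prob(\|\Sigma^{\star\star}-\Sigma\|_F>\psi_Cx)\le 2\exp\{-\tfrac18C_5N\min(x^2/\Upsilon^2,x/\Upsilon)\}$ for all $x\ge\delta_{J^\dagger}$. Integrating yields $\delta_{J^\dagger}^2+\Upsilon^2/N$.

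Your sum $\sum_J(d/2^J)^2\exp\{-c'N\min(\delta_J^2/\Upsilon^2,\delta_J/\Upsilon)\}$ is the discrete version of that integral. It is not literally geometric, since the weights grow as $J$ decreases. It is nevertheless $O(\Upsilon^2/N)$, because $te^{-c't}$ summed along the geometric sequence $t=N\delta_J^2/\Upsilon^2$ is $O(1)$, with the analogous estimate when $\delta_J>\Upsilon$. Finally, \eqref{eq:2log_lower_bound} gives $\Upsilon^2/N\lesssim\delta_{J^\star}^2$.

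Two smaller repairs are also needed.
\begin{itemize}
\item Since $\mathcal{Y}_J$ is data-dependent, you cannot apply \Cref{theorem:covering_bound}, which concerns a fixed set $S$, directly to $\mathcal{O}(\mathcal{Y}_J)$. You must union over all nodes $u\in\mathcal{L}(J)\cap B(\Sigma,d/2^{J-1})$, of which there are at most $\mkloc(d/2^{J-1},2c)$ by property (iii) of \Cref{lemma:tree_properties}. This squares the entropy factor, which is why \eqref{eq:2log_lower_bound} carries the factor $4$. The per-layer bound becomes $\exp\{-\tfrac12C_5N\min(\cdot)\}$ rather than $\exp\{-\tfrac34C_5N\min(\cdot)\}$.
\item With the algorithm's indexing, the step $\mathcal{Y}_J\to\mathcal{Y}_{J+1}$ uses radius $\delta_{J+1}$ and has at most $\mkloc(c\delta_{J+1},2c)$ offspring, so it needs the conditions at level $J+1$. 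The induction therefore reaches $\mathcal{Y}_{\bar J}$, within $(C+1)\delta_{\bar J}$ of $\Sigma$, rather than $\mathcal{Y}_{\bar J+1}$. This only changes constants.
\end{itemize}
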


The \proofref{theorem:upper_bound} can be found in \Cref{appendix:upper_bound_rate}.

\section{Combined minimax rate}

Finally, we can summarize our results into one combined minimax rate. Before we do so, we note that we omit $\Upsilon$ from the rates. This is because $\Upsilon$ can be bounded by a constant, as per \Cref{lemma:D4_bounded}, and therefore is absorbed by the constants.

\begin{theorem}\label{theorem:minimax_rate}
    Let $\varepsilon \in (0,\frac{1}{32})$ and $N\gtrsim \sup_\delta \log \mkloc(2\delta,c)$.  Define
    \begin{align*}
        \delta^\star=\sup\left\{ \delta\geq 0:\, N \frac{\delta^2}{\Upsilon^2} \leq \log \mkloc(\delta,c) \right\}
    \end{align*}
    Then for sufficiently large $c$, the minimax error is bounded as
    \begin{align}
        \left(\varepsilon^2 \vee \delta^{\star2} \right)\wedge d^2 \lesssim \mathfrak{M}(X) \lesssim \left(\varepsilon^2\log^2(1/\varepsilon) \vee \delta^{\star2} \right)\wedge d^2
    \end{align}
\end{theorem}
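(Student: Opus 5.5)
The plan is to assemble \Cref{theorem:minimax_rate} from the lower bounds of \Cref{sec:lower_bound} and the upper bound of \Cref{theorem:upper_bound}. The main work is translating the two different "critical radius" definitions ($\delta$ in \Cref{lemma:lower_bound_1} and $\delta_{J^\star}$ in \Cref{theorem:upper_bound}) into the single quantity $\delta^\star$. Throughout, $\Upsilon$, $c_0$ and $c_1$ are treated as constants by \Cref{lemma:D4_bounded}.

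\textbf{Lower bound.} By \Cref{lemma:lower_bound_2}, $\mathfrak{M}(X)\gtrsim \varepsilon^2\wedge d^2$ whenever $\varepsilon>N^{-1/2}$. For the $\delta^\star$ term, take $\delta=\delta^\star/\kappa$ for a large constant $\kappa$. By the definition of $\delta^\star$ as a supremum, there is $\delta'$ arbitrarily close to $\delta^\star$ with $N\delta'^2/\Upsilon^2\le\log\mkloc(\delta',c)$. Then \Cref{lemma:loc_metric_entropy_noninc} gives $\log\mkloc(\delta,c)\ge\log\mkloc(\delta',c)\ge N\delta'^2/\Upsilon^2\approx \kappa^2 N\delta^2/\Upsilon^2$. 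Choosing $\kappa$ large enough in terms of $C_0/c_0^2$ and $\Upsilon$ yields $\log\mkloc(\delta,c)>4(C_0N\delta^2/c_0^2)$.

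The $\log 2$ requirement and the case $\varepsilon\le N^{-1/2}$ are handled as edge cases. If $\log\mkloc(\delta,c)\le 4\log 2$, then $N\delta^{\star2}\lesssim 1$ and the $\delta^{\star2}$ term is dominated by a trivial two-point lower bound of order $1/N\wedge d^2$. That two-point bound comes from \Cref{lemma:d_line_segment}: take two points on a segment of length $d/3$ at distance $\asymp N^{-1/2}\wedge d$, whose Gaussian KL is $O(1)$. The same two-point bound also covers $\varepsilon^2\le 1/N$. Finally, $\delta^\star\lesssim d$ always holds, since for $\delta>d$ the local entropy is $1$ and the defining inequality fails. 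Combining these gives $\mathfrak{M}\gtrsim(\varepsilon^2\vee\delta^{\star2})\wedge d^2$.

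\textbf{Upper bound.} Apply \Cref{theorem:upper_bound} with $c=2(C+1)$; the sample-size hypothesis matches after renaming constants. It suffices to show $\delta_{J^\star}\lesssim\delta^\star\vee\varepsilon\log(1/\varepsilon)$, or $\delta_{J^\star}\lesssim d$ when $J^\star=1$ (which is trivial). By maximality of $J^\star$, the index $J^\star+1$ violates \eqref{eq:2log_lower_bound}, so with $\delta_{J^\star+1}=\delta_{J^\star}/2$,
\[C_5N\min\Big(\tfrac{\delta_{J^\star+1}^2}{\Upsilon^2},\tfrac{\delta_{J^\star+1}}{\Upsilon}\Big)<4\log\mkloc(c\delta_{J^\star+1},2c)\vee\log 2 .\]
The maximal $J$ need not exist if \eqref{eq:2log_lower_bound} holds for all $J$, but in that case we are at resolution zero and the bound is $\varepsilon^2\log^2(1/\varepsilon)$ by letting $J\to\infty$. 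When the minimum is the quadratic term, set $\rho=c\delta_{J^\star+1}$. The inequality becomes $N\rho^2/\Upsilon^2\lesssim c^2\log\mkloc(\rho,2c)$. The remaining task is to compare $\mkloc(\rho,2c)$ with $\mkloc(\cdot,c)$ at a comparable radius, so that some $\rho'\asymp\rho$ satisfies $N\rho'^2/\Upsilon^2\le\log\mkloc(\rho',c)$, which gives $\rho\lesssim\delta^\star$.

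\textbf{Main obstacle.} This comparison is the delicate step. Change of radius factor is standard via the packing/covering relation $\mkloc(\rho,2c)\le\mkloc(\rho,c)\cdot\mkloc(\rho/c,2)$-type bounds of \textcite{prasadan_information_2025}. The first thing to try is to recover the statement's "sufficiently large $c$" freedom by defining $\delta^\star$ with the constant $2c$ (or $c^2$), and to absorb the discrepancy into constants using the monotonicity in \Cref{lemma:loc_metric_entropy_noninc}. The $\log 2$ branch gives $N\delta_{J^\star}^2\lesssim 1$, which is dominated by $\varepsilon^2\log^2$ or by $\delta^{\star2}$. The linear branch, $\delta/\Upsilon>1$, only occurs when $\delta\gtrsim\Upsilon$; then $N\delta\lesssim\log\mkloc\lesssim N$ by the sample-size assumption, so $\delta\lesssim 1$ and this case merges with the quadratic one. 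Taking the minimum with $d^2$ then yields the stated upper bound.
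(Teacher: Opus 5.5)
Your proposal is correct in substance. The upper bound is the paper's argument run in contrapositive form, while the lower-bound edge cases use a different and simpler device.

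The paper splits the lower bound according to $N\delta^{\star2}/\Upsilon^2\lessgtr 8\log 2$ and $\varepsilon\lessgtr N^{-1/2}$. When $N\delta^{\star2}$ is small, it imports $d\le 12\delta^\star$ from Prasadan--Neykov and runs Fano (\Cref{lemma:lower_bound_1}) at a scale proportional to $d$ or to $\varepsilon\wedge d$. That is where ``$c$ sufficiently large'' enters: it is needed to get a packing of cardinality above $e^{4\log 2}$. When $\varepsilon\le N^{-1/2}$ but $N\delta^{\star2}$ is large, the paper compares $\varepsilon$ with $\delta^\star$ directly. Your two-point Le Cam bound on the segment of \Cref{lemma:d_line_segment} gives $\mathfrak{M}(X)\gtrsim N^{-1}\wedge d^2$. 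This settles all of those cases at once, with no largeness of $c$. Your main case $\delta=\delta^\star/\kappa$ is the paper's Case 5.

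For the upper bound, the paper shows directly that $\tilde\delta=G_c\delta^\star$ satisfies \eqref{eq:2log_lower_bound}, using \Cref{lemma:2r_bound}, monotonicity, and $\delta^\star/\Upsilon\le B$ from the sample-size condition, and then invokes maximality of $J^\star$. You start instead from the failure of \eqref{eq:2log_lower_bound} at $J^\star+1$. This needs the same three ingredients, so the two routes have the same strength. In your route, largeness of $c$ is only needed to make $c=2(C+1)$ admissible.

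Several steps need tightening.

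First, do not redefine $\delta^\star$ with factor $2c$; that would change the theorem. The product bound you quote, together with $\mkloc(\cdot,2)\le\mkloc(\cdot,c)$ for $c\ge 2$ and monotonicity in the radius, already gives $\log\mkloc(c\delta,2c)\le 2\log\mkloc(\delta,c)$. This is the content of \Cref{lemma:2r_bound}. So in the quadratic branch, failure at $\delta=\delta_{J^\star+1}$ yields $N\delta^2/\Upsilon^2<(8/C_5)\log\mkloc(\delta,c)$, and hence $\min(1,\sqrt{C_5/8})\,\delta\le\delta^\star$.

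Second, the case $J^\star=1$ is not trivial. If \eqref{eq:2log_lower_bound} already fails at $J=1$, \Cref{theorem:upper_bound} only gives order $d^2$, and you still need $d\lesssim\delta^\star\vee\varepsilon\log(1/\varepsilon)$. Running your failure argument at $J=1$ instead of $J^\star+1$ supplies this.

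Third, in the $\log 2$ branch the claim that $N^{-1}$ is dominated (after $\wedge d^2$) by $\delta^{\star2}$ needs a local entropy lower bound. Take a ball of radius $\delta\le d/6$ centered at the midpoint of the segment; it contains two $\delta/c$-separated points of $K$. Hence $\log\mkloc(\delta,c)\ge\log 2$, so $\delta^\star\ge (d/6)\wedge\Upsilon\sqrt{\log 2/N}$ and $\delta^{\star2}\gtrsim N^{-1}\wedge d^2$. This replaces the paper's upper-bound Case 1.

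Finally, two small corrections:
\begin{itemize}
\item Condition \eqref{eq:2log_lower_bound} cannot hold for every $J$: its right side is at least $\log 2$, while its left side tends to $0$. So the maximal $J$ always exists.
\item The trivial bound is $\delta^\star\le cd$, not $\delta^\star\le d$. The local packing number equals $1$ only once $\delta/c>d$.
\end{itemize}
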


\begin{remark}
    A single minimax rate is not achieved for this setting - the left and right bounds differ by a factor of $\log^2(1/\varepsilon)$. However, this is a small price to pay. For example, when $\varepsilon = 0$, $\varepsilon^2 = \epsilon^2\log^2(1/\varepsilon)=0$. Indeed, whenever $\varepsilon$ is a constant this difference is of course negligible as it is just a constant. If $\varepsilon \asymp 1/n$ then the rate only differs by a factor of $\log^2 n$ which is of course dominated by the $1/n^2$ in the rate. This will be explored further in \Cref{sec:examples}.
\end{remark}

\begin{remark}
    Due to the argument presented in \textcite[Lemma 5.7]{wainwright_highdimensional_2019}, for $\delta>0$ we have that $\log \mkloc(\delta,2c) \lesssim n^2$. Therefore, $N \gtrsim n^2$ is sufficient for the sample size condition $N \gtrsim \sup_\delta \log \mkloc(\delta,2c)$. 
\end{remark}

\newcommand{\boundaryA}{8 \log 2}

\begin{proof}
    Note first that for $\Sigma^{\star\star}$ as the output of \Cref{algorithm:robust_algorithm} and $J^*\geq 1$ the maximal $J$ such that $ C_5 N \min\left(\frac{\delta_J^2}{\Upsilon^2},\frac{\delta_J}{\Upsilon}\right) \geq 4\log\mkloc(c\delta_J,2c)\bigvee \log 2$. We have \begin{align} \E_\xi\|\Sigma^{\star\star} - \Sigma\|_F^2 \lesssim \max\{\delta_{J^\star}^2,\varepsilon^2\log^2(1/\varepsilon)\} \wedge d^2\label{eq:upper_bound_deltaj}\end{align} from \Cref{theorem:upper_bound}.
    Also, without loss of generality assume $\delta^\star > 0$ (for reasoning see \textcite[proof of Theorem 4.4]{peng_robust_2026}). We consider the upper and lower bounds separately.
    \paragraph{Upper bound}\phantom{.}
    \\\textbf{Case 1: $N \frac{\delta^{\star2}}{\Upsilon^2} \leq \boundaryA$.} In this case we just need to see that $d \lesssim \delta^{\star}$ which follows directly from \textcite[proof of Theorem 3.10, case 3]{prasadan_information_2025}
    because
    \begin{align*}
        8 \log 2 \geq N \frac{\delta^{\star2}}{\Upsilon^2} = \frac{N}{4}\frac{(2\delta^\star)^2}{\Upsilon^2} \geq \frac{1}{4}\log \mkloc(2\delta^\star,c).
    \end{align*}
    Therefore we have the upper bound \begin{align*}
        \inf_{\hat\Sigma} \sup_{\Sigma} \sup_{\mathcal{C}} \E \| \hat\Sigma(\mathcal{C}(X)) - \Sigma \|_F^2 \leq d^2 \lesssim  \left(\varepsilon^2\log^2(1/\varepsilon) \vee \delta^{\star2}\right) \wedge d^2.
    \end{align*}
    \\\textbf{Case 2: $N \frac{\delta^{\star2}}{\Upsilon^2} \geq \boundaryA$.}
    \\We have from \Cref{theorem:upper_bound} that $\left(\delta_{J^\star}^2 \vee \varepsilon^2\log^2(1/\varepsilon)\right) \bigwedge d^2$ upper bounds the minimax rate. Thus, we can obtain the desired bound by finding $\tilde{\delta} \asymp\delta^\star$ such that $\tilde{\delta} \gtrsim \delta_{J^\star}$. 
    First, notice that if $N \gtrsim \sup_{\delta>0}\log \mkloc(\delta,2c)$ then $\frac{\delta^\star}{\Upsilon} \leq B$ for some fixed constant $B$ and 
    \[
        \min\bigg(G^2\frac{\delta^{\star2}}{\Upsilon^2},G\frac{\delta^\star}{\Upsilon}\bigg) \geq \min\bigg(G^2,\frac{G}{B}\bigg) \frac{\delta^{\star2}}{\Upsilon^2} 
    \]
    by the process laid out in \Cref{lemma:delta_J_123_rates}. Now let $\eta_G = \min\bigg(G^2,\frac{G}{B}\bigg)$ and $\tilde\delta \coloneqq G_c\delta^\star$ for some $G_c$ such that $G_c \geq \frac{4}{c} $ and $\eta_{G_c} \geq \frac{32}{C_5}$ and notice that
    \begin{align*}
        C_5N\min\left(\frac{\tilde{\delta}^2}{\Upsilon^2}, \frac{\tilde{\delta}}{\Upsilon}\right) &= C_5N\min\left(G_c^2\frac{\delta^{\star2}}{\Upsilon^2}, G_c\frac{\delta^\star}{\Upsilon}\right)
        \\ &\geq C_5N\eta_{G_c}\frac{\delta^{\star2}}{\Upsilon^2}
        \\ &\overset{(i)}{\geq} 8N\frac{(2\delta^{\star})^2}{\Upsilon^2}
        \\ &\overset{(ii)}{\geq} 8\log\mkloc(2\delta^\star,c)
        \\ &\overset{(iii)}{\geq} 8\log\mkloc\left(\frac{c}{2}G_c\delta^\star,c\right)
        \\ &\geq 8\log\mkloc\left(\frac{c}{2}\tilde{\delta},c\right)
        \\ &\overset{(iv)}{\geq} 4\log\mkloc\left(c\tilde{\delta},2c\right)
    \end{align*}
    where $(i)$ is due to the definition of $G_c$, $(ii)$ is due to the definition of $\delta^\star$, $(iii)$ is due to the definition $G_c$, plus the monotonicity of $\mkloc$, and $(iv)$ is due to \Cref{lemma:2r_bound}. Notice that this gives 
    \begin{align*}
        C_5N\min\left(\frac{\tilde{\delta}^2}{\Upsilon^2}, \frac{\tilde{\delta}}{\Upsilon}\right) \geq 8N\frac{(2\delta^{\star})^2}{\Upsilon^2} \geq 256\log 2 > \log 2.
    \end{align*}
    Thus, \Cref{eq:2log_lower_bound} holds for $\tilde\delta$. If $\tilde\delta \geq \delta_1$, set $\delta^\prime \coloneqq \tilde\delta$. Otherwise, choose $\delta^\prime$ as the $\delta_j$ in $[\tilde{\delta},2\tilde\delta)$. Note that this is a valid choice of $\delta$ because the $\min\big(\frac{\delta^2}{\Upsilon^2},\frac{\delta}{\Upsilon}\big)$ is increasing in $\delta$ while the local metric entropy is non-increasing in $\delta$. Now, because $J^\star$ is the largest $J$ such that \Cref{eq:2log_lower_bound} holds, we have that $\delta^\prime \gtrsim \delta_{J^\star}$ and applying \Cref{theorem:upper_bound} gives the desired inequality
    \begin{align*}
        \inf_{\hat\Sigma} \sup_{\Sigma} \sup_{\mathcal{C}} \E \| \hat\Sigma(\mathcal{C}(X)) - \Sigma \|_F^2 \lesssim \left(\varepsilon^2\log^2(1/\varepsilon) \vee \delta_{J^\star}^{2}\right) \wedge d^2 \lesssim  \left(\varepsilon^2\log^2(1/\varepsilon) \vee \delta^{\star2}\right) \wedge d^2.
    \end{align*}
    \paragraph{Lower bound}\phantom{.}
    \\\textbf{Case 3: $N \frac{\delta^{\star2}}{\Upsilon^2} \leq \boundaryA$ and $\varepsilon > \frac{1}{\sqrt{N}}$.} Following the same argument as \textcite[Theorem 3.10, case 3]{prasadan_information_2025} we have $d \leq 12 \delta^\star$. Now define $\tilde{\delta} \coloneqq \frac{c_0}{25\sqrt{C_0}\Upsilon} d \leq \frac{1}{2}\delta^\star$ (note $c_0 \leq \Upsilon \leq \sqrt{C_0}\Upsilon$) to get
    \begin{align*}
        \boundaryA \geq & N \bigg(\frac{\delta^\star}{\Upsilon}\bigg)^2  \geq N \bigg(\frac{d}{12\Upsilon}\bigg)^2 = N\bigg(\frac{25\Upsilon\tilde{\delta}\sqrt{C_0}/c_0}{12\Upsilon}\bigg)^2 > 4\bigg(\frac{\tilde{\delta}}{c_0}\bigg)^2NC_0 
    \end{align*}
    Then if we choose $c$ large enough such that $ \log M^{\operatorname{ loc }}\Big(\frac{c_0}{24\Upsilon}d,c\Big) \geq \boundaryA $, we have
    \begin{align*}
        \log M^{\operatorname{ loc }}(\tilde{\delta },c) =&\log M^\mathrm{ loc }\bigg(\frac{c_0}{25\sqrt{C_0}\Upsilon}d,c\bigg) \geq \boundaryA \geq \boundaryA \vee 4\frac{ C_0 }{ c_0 ^2 }N\tilde{\delta }^2  > 4\bigg( \dfrac{ C_0}{ c_0 ^2 }N\tilde{\delta }^2  \vee \log 2  \bigg)
    \end{align*}
    then by Lemma \ref{lemma:lower_bound_1} we have
    \begin{align*}
        \inf_{\hat\Sigma} \sup_{\Sigma} \sup_{\mathcal{C}} \E \lVert  \hat\Sigma(\mathcal{C}(X)) - \Sigma \rVert_F^2 \gtrsim \tilde{\delta }^2 \gtrsim d^2 \gtrsim d^2\wedge \delta ^{\star2},
    \end{align*}
    and by Lemma \ref{lemma:lower_bound_2} we have 
    \begin{align*}
        \inf_{\hat\Sigma} \sup_{\Sigma} \sup_{\mathcal{C}} \E \lVert  \hat\Sigma(\mathcal{C}(X)) - \Sigma \rVert_F^2 \gtrsim  \varepsilon^2 \wedge d^2.
    \end{align*}
    \textbf{Case 4:  $N \frac{\delta^{\star2}}{\Upsilon^2} \leq \boundaryA$ and $\varepsilon \leq \frac{1}{\sqrt{N}}$.}
       The bound on $\delta^{\star2}$ holds but the bound on $\varepsilon^2$ must be shown. We can utilize Lemma \ref{lemma:lower_bound_1} to do so by showing that $\tilde\delta = \frac{1}{3\sqrt{2}}\big(\frac{c_0}{\sqrt{C_0}}\varepsilon \wedge d\big)$ meets the conditions of that lemma, specifically that $\log \mkloc(\tilde\delta,c) > 4\bigg(\frac{C_0}{c_0^2}N\tilde{\delta}^2 \vee \log 2\bigg)$. We have that $4\frac{C_0}{c_0^2}N\tilde{\delta}^2 \leq \frac{4}{18}N\varepsilon^2 \leq \frac{2N}{9}\varepsilon^2 \leq \frac{2}{9}$. From \cite[proof of Theorem 3.10, case 4]{prasadan_information_2025} we have that we can obtain a $\frac{d}{3\sqrt{2}c}-$packing set of cardinality greater than $\exp(4 \log 2)$ for sufficiently large c. Thus, 
        \begin{align*}
            \log \mkloc\Bigg(\frac{1}{3\sqrt{2}}\bigg(\frac{c_0}{\sqrt{C_0}}\varepsilon \wedge d\bigg),c\Bigg) &\geq  \log \mkloc\bigg(\frac{d}{3\sqrt{2}},c\bigg) 
            \\ &> 4 \log 2 
            \\ & \geq 4 \log 2 \vee \frac{2}{9} 
            \\ &\geq 4 \bigg(\log 2 \vee \frac{C_0}{c_0^2}N\tilde\delta^2\bigg)
        \end{align*}
        and we have 
        \begin{align*}
            \inf_{\hat\Sigma} \sup_{\Sigma} \sup_{\mathcal{C}} \E \lVert  \hat\Sigma(\mathcal{C}(X)) - \Sigma \rVert_F^2 \gtrsim   \tilde\delta^2 \asymp \varepsilon^2 \wedge d^2.
        \end{align*}
    \\\textbf{Case 5: $N \frac{\delta^{\star2}}{\Upsilon^2} \geq \boundaryA$ and $\varepsilon > \frac{1}{\sqrt{N}}$.}
    \\ Let $\tilde\delta \coloneqq \Big(\frac{c_0}{4\sqrt{C_0}\Upsilon} \wedge \frac{1}{2}\Big)\delta^\star$. Note that
    \begin{align*}
        \log \mkloc\bigg(\tilde{\delta},c\bigg) &\geq \log \mkloc\Big(\frac{\delta ^\star}{2},c\Big)
        \\ &\geq \lim_{h \downarrow 0}\log \mkloc(\delta^\star - h,c)
        \\ &\geq \lim_{h \downarrow 0} N \frac{(\delta^\star - h)^2}{\Upsilon^2}
        \\ &= N \frac{\delta^{\star2}}{\Upsilon^2}
        \\ &\geq 8 \log 2 \vee N\frac{\delta^{\star2}}{\Upsilon^2} 
        \\ &> 4\bigg( \frac{C_0}{c_0 ^2}N\tilde{\delta}^{2} \vee \log 2 \bigg).
    \end{align*} 
    Then by Lemma \ref{lemma:lower_bound_1} we have
    \begin{align*}
        \inf_{\hat\Sigma} \sup_{\Sigma} \sup_{\mathcal{C}} \E \lVert  \hat\Sigma(\mathcal{C}(X)) - \Sigma \rVert_F^2 \gtrsim \tilde{\delta}^{2} \gtrsim \delta^{\star2} \gtrsim d^2\wedge \delta ^{\star2}.  
    \end{align*}
    And by Lemma \ref{lemma:lower_bound_2} we have 
    \begin{align*}
        \inf_{\hat\Sigma} \sup_{\Sigma} \sup_{\mathcal{C}} \E \lVert  \hat\Sigma(\mathcal{C}(X)) - \Sigma \rVert_F^2 \gtrsim  \varepsilon^2  \wedge d^2.
    \end{align*}
    \textbf{Case 6: $N \frac{\delta^{\star2}}{\Upsilon^2} \geq \boundaryA$ and $\varepsilon \leq \frac{1}{\sqrt{N}}$.}
    Similar to case 4, the bound on $\varepsilon^2$ must be shown. 
    Take $\tilde\delta = \Upsilon \varepsilon$. Since $\varepsilon \leq \frac{1}{\sqrt{N}}$, $N\frac{\tilde\delta^2}{\Upsilon^2} \leq 1$. On the other hand, $N \frac{\delta^{\star2}}{\Upsilon^2} \geq \boundaryA$. Thus, $\varepsilon \asymp \varepsilon \Upsilon = \tilde\delta < \delta^\star$. Since $\delta^{\star2}$ is a lower bound on the rate, $\varepsilon^2$ must be, too.
    \\ Combining the six cases gives the desired bound.
\end{proof}

\section{Examples}\label{sec:examples}

We have established the minimax rate and now illustrate its relevance to existing results in the literature through a series of examples. We begin with a corollary to our main theorem, in which we establish an upper bound on estimation for the uniformity class of covariance matrices with the $\ell_q$ ball. 

\subsection{Covariance matrices in the uniformity class with the $\ell_q$ ball} \label{subsec:lq_ball}

\begin{corollary}\label[corollary]{cor:lq_ball}
    Consider the uniformity class of matrices with the $\ell_q$ ball with bounded eigenvalues, $\mathcal{U}_q$. In this class each column of the covariance matrix $\Sigma$ is in the $\ell_q$ ball for $q \in [0,1]$: 
    \begin{align*}
        \mathcal{U}_q(s_0(n)) \coloneqq \left\{ \Sigma: \max_i \sum_{j=1}^n |\sigma_{ij}|^q \leq s_0(n) , 0 < c_0 \leq \lambda_{\min}(\Sigma) \leq \lambda_{\max}(\Sigma) \leq c_1 \right\} \hspace{2mm}
    \end{align*} for $s_0(n) \geq c_0^q$, $N \gtrsim n^2$, $n \geq 2$, and $X \sim N(0,\Sigma)$ such that $\Sigma \in U_q(s_0(n))$. 
    \\Then 
    \begin{align*}
        \frac{1}{n} \mathfrak{M}(X) \lesssim \frac{1}{n} \varepsilon^2 \log^2(1/\varepsilon)\bigvee \left(s_0(n) \left(\frac{\log n}{N}\right)^{1-q/2} \bigwedge \frac{n}{N}\right).
    \end{align*}
\end{corollary}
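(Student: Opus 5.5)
We apply \Cref{theorem:minimax_rate} with $K=\mathcal{U}_q(s_0(n))$ and bound $\delta^\star$ from above. First we check the hypotheses. Conditions \ref{cond:K_set}--\ref{cond:K_star} hold with center $k^\star=c_0\mathbb{I}$, which lies in $K$ because $s_0(n)\geq c_0^q$. For $\Sigma\in K$ and $\alpha\in[0,1]$, each row of $\alpha\Sigma+(1-\alpha)c_0\mathbb{I}$ satisfies
\[
\sum_j|\alpha\sigma_{ij}+(1-\alpha)c_0\indicator{i=j}|^q\leq \sum_j|\sigma_{ij}|^q\vee c_0^q\leq s_0(n).
\]
This is clear off the diagonal. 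On the diagonal, $c_0\leq\sigma_{ii}\leq c_1$, so the convex combination lies between $c_0$ and $\sigma_{ii}$ and its $q$-th power is at most $\sigma_{ii}^q$. The eigenvalue bounds are preserved by convexity. The sample size condition follows from $N\gtrsim n^2$ and the remark after \Cref{theorem:minimax_rate}. Since the upper bound is $\lesssim(\varepsilon^2\log^2(1/\varepsilon)\vee\delta^{\star2})\wedge d^2$, it suffices to show
\[
\delta^{\star2}\lesssim n\Big(s_0(n)\big(\tfrac{\log n}{N}\big)^{1-q/2}\wedge\tfrac{n}{N}\Big).
\]
Dividing by $n$ then gives the claim.

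\textbf{The bound $n^2/N$.} Because $K$ lies in the space of symmetric $n\times n$ matrices, of dimension $\asymp n^2$, a volumetric argument gives $\log\mkloc(\delta,c)\leq \log\mathcal{M}(\delta/c,B(\Sigma,\delta))\lesssim n^2\log(1+2c)$. Every $\delta$ in the set defining $\delta^\star$ therefore satisfies $N\delta^2/\Upsilon^2\lesssim n^2$. Since $\Upsilon$ is constant by \Cref{lemma:D4_bounded}, we get $\delta^{\star2}\lesssim n^2/N$.

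\textbf{The bound $ns_0(\log n/N)^{1-q/2}$.} We bound the local entropy through the global entropy of the row-wise $\ell_q$ structure: $\log\mkloc(\delta,c)\leq\log\mathcal{M}(\delta/c,K)$. Let $\Sigma\in K$ and let $\Sigma'$ be obtained by hard-thresholding the off-diagonal entries at level $t$. Each row then has at most $s_0t^{-q}$ surviving entries, and the standard $\ell_q$ calculation bounds the per-row $\ell_2^2$ error by $s_0t^{2-q}$. Summing over the $n$ rows, the Frobenius error is at most $ns_0t^{2-q}$, and the thresholded matrix has at most $k\asymp ns_0t^{-q}$ nonzeros. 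Covering $k$-sparse matrices of bounded entries at scale $\delta/(2c)$ costs $\lesssim k\log(n^2/k)+k\log(c\sqrt{k}/\delta)\lesssim k\log n$ in log-cardinality, since $\delta\gtrsim n^{-C}$ in the relevant regime. Choosing $t$ so that $ns_0t^{2-q}\asymp\delta^2/c^2$ gives
\[
\log\mathcal{M}(\delta/c,K)\lesssim ns_0\Big(\frac{ns_0}{\delta^2}\Big)^{q/(2-q)}\log n.
\]
Any admissible $\delta$ must satisfy $N\delta^2\lesssim$ this quantity. Solving gives $\delta^{2\cdot 2/(2-q)}\lesssim (ns_0)^{2/(2-q)}\log n/N$, that is, $\delta^2\lesssim ns_0(\log n/N)^{1-q/2}$. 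The case $q=0$ is the direct sparse count.

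The main obstacle is this entropy step. Two points need care. First, the crude bound $\log\mkloc\leq\log\mathcal{M}(\delta/c,K)$ has to lose only constants, so the logarithmic factor must come out as $\log n$ and not $\log(n/\delta)$; this needs $\delta$ not too small, which we handle by noting that if $\delta$ falls below $n^{-C}$ the claimed bound holds trivially. Second, the row-wise constraint allows up to $n\cdot s_0t^{-q}$ nonzero positions spread across rows, and the combinatorial count must be carried out row by row, giving $\sum_i k_i\log n$. If the global route turns out too lossy, the fallback is a genuinely local packing argument inside $B(\Sigma,\delta)$ that uses the same thresholding.
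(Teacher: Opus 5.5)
Your outline follows the paper: star-shapedness about $c_0\mathbb{I}$, then the local volumetric bound $\log\mkloc(\delta,c)\lesssim n^2$ giving $\delta^{\star2}\lesssim n^2/N$, then an entropy estimate for the sparse branch. For $q\in(0,1]$, your hard-thresholding computation is a self-contained replacement for the paper's appeal to the Gu\'edon--Litvak/Neykov entropy bound for the $n^2$-dimensional $\ell_q$ ball of radius $(ns_0(n))^{1/q}$. Your count only uses $\|\Sigma\|_q^q\le ns_0(n)$, so it is really the same embedding, and the row-by-row bookkeeping you worry about is unnecessary. The cost of your route is a factor $\log(\sqrt{n}/\delta)$ from covering magnitudes, which the sharp bound $(R/\delta)^{2q/(2-q)}\log\bigl(n^2(\delta/R)^{2q/(2-q)}\bigr)$ does not carry. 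Your way of disposing of small $\delta$ is not justified as stated. Only $N\gtrsim n^2$ is assumed, so the target $ns_0(n)(\log n/N)^{1-q/2}$ has no polynomial lower bound in $n$. For $q>0$ the repair is easy. The sparse branch is the smaller one only when $s_0(n)(\log n)^{1-q/2}N^{q/2}\le n$. That forces $N\lesssim n^{2/q}$, hence a target $\gtrsim n^{2-2/q}$, so taking $C=C_q$ works. In the complementary regime the $n^2/N$ bound already suffices.

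\textbf{The genuine gap is $q=0$.} There the sparse branch $ns_0(n)\log n/N$ is the minimum whenever $s_0(n)\log n\le n$, for every $N$. So the target can be smaller than any fixed power of $n$, and the $n^{-C}$ escape fails. No argument through the global entropy can rescue this. $K$ contains an $n$-dimensional cube of diagonal matrices with entries in $[c_0,c_1]$, so $\log\mathcal{M}(\delta/c,K)\gtrsim n\log(1/\delta)$. At the target scale this is of order $n\log N$, which exceeds $N\delta^2\asymp ns_0(n)\log n$ once $\log N\gg s_0(n)\log n$. You must count locally, which is what the paper does. Cover $B(\Sigma_0,\delta)\cap\mathcal{U}_0(s)$ by at most $\binom{n^2}{ns}\le (en/s)^{ns}$ sets $B(\Sigma_0,\delta)\cap V_S$, where $V_S$ is the coordinate subspace of matrices supported on $S$ with $|S|=ns$. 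Each such set lies in a $\delta$-ball of the $ns$-dimensional space $V_S$, so its $\delta/c$-packing number is at most $(1+2c)^{ns}$. This gives $\log\mkloc(\delta,c)\lesssim ns\log(en/s)$ uniformly in $\delta$, hence $\delta^{\star2}\lesssim ns\log(en/s)/N\lesssim ns_0(n)\log n/N$. This is exactly the ``genuinely local'' fallback you mention, and for $q=0$ it is required, not optional.
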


Readers may note that in the uncorrupted regime, this rate is analogous to that established by \textcite{bickel_covariance_2008}, where that paper established a rate of $s_0(n)\left(\frac{\log n}{N}\right)^{1-q/2}$.

\begin{lemma}\label[lemma]{lemma:Uq_star_shaped}
    For $q \in [0,2]$, the uniformity class of matrices with the $\ell_q$ ball with bounded eigenvalues, $\mathcal{U}_q$, is star-shaped around $c_0\mathbb{I}$, where $c_0$ is the eigenvalue lower bound. 
\end{lemma}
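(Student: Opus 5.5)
The plan is to verify \Cref{def:star_shaped} directly with center $k^\star = c_0\mathbb{I}$. Fix $\Sigma \in \mathcal{U}_q(s_0(n))$ and $\alpha \in [0,1]$, and set $\Sigma_\alpha \coloneqq \alpha\Sigma + (1-\alpha)c_0\mathbb{I}$. I will check separately that $\Sigma_\alpha$ satisfies the eigenvalue constraint and the row-wise $\ell_q$ constraint.

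First I check that the center lies in the class. The matrix $c_0\mathbb{I}$ is symmetric with all eigenvalues equal to $c_0 \in [c_0,c_1]$. Each of its rows has a single nonzero entry $c_0$, so the row sum equals $c_0^q \le s_0(n)$ by the assumption $s_0(n) \ge c_0^q$ (this is exactly why that assumption appears in \Cref{cor:lq_ball}). For $q=0$ I use the standard convention $|t|^0 = \indicator{t \neq 0}$, so the row sum is $1 = c_0^0$.

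Next come the eigenvalues. $\Sigma$ and $\mathbb{I}$ are simultaneously diagonalizable, so $\Sigma_\alpha$ is symmetric with eigenvalues $\alpha\lambda_k + (1-\alpha)c_0$. Each such eigenvalue is a convex combination of $\lambda_k \in [c_0,c_1]$ and $c_0$, so it lies in $[c_0,c_1]$. In particular $\Sigma_\alpha \succ 0$.

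The row constraint is the step needing care, and I treat it entrywise. For $i \neq j$ the entries are $(\Sigma_\alpha)_{ij} = \alpha\sigma_{ij}$. When $q>0$, monotonicity of $t \mapsto t^q$ on $[0,\infty)$ gives $|\alpha\sigma_{ij}|^q \le |\sigma_{ij}|^q$. When $q=0$, scaling cannot create a nonzero entry, so the same inequality holds. The diagonal entries are the main obstacle, since shrinking toward $c_0\mathbb{I}$ does not shrink them toward zero. I will use $\sigma_{ii} = e_i\T\Sigma e_i \ge \lambda_{\min}(\Sigma) \ge c_0 > 0$. Then $(\Sigma_\alpha)_{ii} = \alpha\sigma_{ii} + (1-\alpha)c_0 \in [c_0,\sigma_{ii}]$, which is positive and at most $\sigma_{ii}$. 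Hence $|(\Sigma_\alpha)_{ii}|^q \le \sigma_{ii}^q$ for all $q \in [0,2]$; for $q=0$ both sides equal $1$.

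Summing over $j$ gives $\sum_j |(\Sigma_\alpha)_{ij}|^q \le \sum_j |\sigma_{ij}|^q \le s_0(n)$ for every $i$. Therefore $\Sigma_\alpha \in \mathcal{U}_q(s_0(n))$, and $\mathcal{U}_q$ is star-shaped around $c_0\mathbb{I}$.
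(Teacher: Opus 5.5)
Your proof is correct and follows the same route as the paper's. The eigenvalue constraint comes from the convex combination, and the row constraint from an entrywise comparison: off-diagonal entries shrink by $\alpha$, while each diagonal entry moves from $\sigma_{ii}$ toward $c_0 \le \sigma_{ii}$, with the case $q=0$ handled by noting the support cannot grow. You are in fact slightly more careful than the paper, since you explicitly justify $\sigma_{ii} = e_i^\top \Sigma e_i \ge c_0$ and invoke $s_0(n) \ge c_0^q$ to place the center $c_0\mathbb{I}$ in the class.
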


\begin{lemma}\label[lemma]{lemma:delta_properties}
    Assume $c_0^q \leq s_0(n) \lesssim nN^{-q/2}\log^{q/2-1}n$, (i.e. the sparse branch of the rate holds) and $N \gtrsim n^2$. For $q \in (0,1]$, let $R=(ns_0(n))^{1/q}$ and $\delta$ such that \[\delta^2 \asymp ns_0(n)\left(\frac{\log(n)}{N}\right)^{\frac{2-q}{2}}
    %\bigwedge \frac{n^2}{N}
    .\] Then 
    \begin{align}
        \log\left( n^{2-\frac{2}{2-q}} s_0(n)^{-\frac{2}{2-q}} \delta^{\frac{2q}{2-q}} \right) {\lesssim} \log (n) \label{eq:n_to_log_n}
        \shortintertext{and}
         R^{-1}\delta \gtrsim n^{1 - 2/q}. \label{eq:Rdelta_bound}
    \end{align}
\end{lemma}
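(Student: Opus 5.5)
Both inequalities follow by substituting the assumed order of $\delta$ and simplifying exponents. No earlier result is needed beyond the hypotheses of \Cref{lemma:delta_properties}. Throughout, $\delta^2 = \kappa\, n s_0(n)\,(\log n/N)^{(2-q)/2}$ for a constant $\kappa$ bounded above and below. Any constant factor that appears inside a logarithm, or as a power of $\log n$, is absorbed using $n\geq 2$, so that $\log n \geq \log 2$.

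\textbf{Proof of \eqref{eq:n_to_log_n}.} I raise $\delta^2$ to the power $q/(2-q)$, which gives
\[
\delta^{\frac{2q}{2-q}} \asymp (ns_0(n))^{\frac{q}{2-q}}\left(\frac{\log n}{N}\right)^{q/2}.
\]
Collecting exponents in the argument of the logarithm:
\begin{itemize}
\item the exponent of $n$ is $2-\frac{2}{2-q}+\frac{q}{2-q}=1$;
\item the exponent of $s_0(n)$ is $\frac{-2+q}{2-q}=-1$.
\end{itemize}
Hence the argument is $\asymp n\,s_0(n)^{-1}(\log n/N)^{q/2}$. I then apply two bounds:
\begin{itemize}
\item $s_0(n)\geq c_0^q$, so $s_0(n)^{-1}\leq c_0^{-q}$;
\item $N\gtrsim n^2$, so $N^{-q/2}\lesssim n^{-q}$.
\end{itemize}
This bounds the argument by a constant times $n^{1-q}(\log n)^{q/2}\leq n\log n$. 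Taking logarithms gives $\lesssim \log n + \log\log n + O(1)\lesssim \log n$.

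\textbf{Proof of \eqref{eq:Rdelta_bound}.} Since $R=(ns_0(n))^{1/q}$,
\[
R^{-1}\delta \asymp (ns_0(n))^{\frac12-\frac1q}\left(\frac{\log n}{N}\right)^{\frac{2-q}{4}}.
\]
The exponent $\frac12-\frac1q=-\frac{2-q}{2q}$ is negative for $q\in(0,1]$. So a lower bound on $R^{-1}\delta$ needs an upper bound on $ns_0(n)$. For this I use the sparse-branch hypothesis, which gives
\[
ns_0(n)\lesssim n^2N^{-q/2}(\log n)^{q/2-1}.
\]
Raising this to the power $-\frac{2-q}{2q}$ gives
\[
(ns_0(n))^{-\frac{2-q}{2q}} \gtrsim n^{-\frac{2-q}{q}}\,N^{\frac{2-q}{4}}\,(\log n)^{\frac{(2-q)^2}{4q}}.
\]
Multiplying by $(\log n/N)^{(2-q)/4}$, the powers of $N$ cancel exactly. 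This leaves
\[
R^{-1}\delta\gtrsim n^{1-2/q}(\log n)^{\frac{(2-q)^2}{4q}+\frac{2-q}{4}}.
\]
The exponent of $\log n$ is nonnegative and $\log n\geq\log 2$, so the logarithmic factor is bounded below by a constant. This yields $R^{-1}\delta\gtrsim n^{1-2/q}$. Here $N\gtrsim n^2$ is not even needed.

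\textbf{Main obstacle.} The step needing most care is getting the sign of the exponent in the second bound right. Because the exponent is negative, the \emph{upper} bound on $s_0(n)$ must be used there, not the lower bound $s_0(n)\geq c_0^q$. It also has to be checked that the powers of $N$ cancel, so that no sample-size condition enters. Everything else is bookkeeping of exponents.
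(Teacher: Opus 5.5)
Your proposal is correct and follows the paper's own route. You substitute $\delta$ and collect exponents so the first logarithm's argument becomes $n\,s_0(n)^{-1}(\log n/N)^{q/2}$, then bound it using $s_0(n)\geq c_0^q$ and a sample-size condition; for the second bound you use the sparse-branch upper bound on $s_0(n)$, flipped by the negative exponent $\tfrac12-\tfrac1q$. The only differences are cosmetic: in the first part the paper needs just $\log n/N\lesssim n$ rather than $N\gtrsim n^2$, and in the second part it argues through a chain of sufficient conditions, reducing to $s_0(n)\lesssim n(\log n/N)^{q/2}$, rather than substituting forward.
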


The \proofref{lemma:delta_properties} is in \ref{appendix:ex_proofs}.

\begin{delayedproof}{cor:lq_ball}
We have that $\mathcal{U}_q$ is star-shaped around the $c_0 \mathbb{I}$ matrix by \Cref{lemma:Uq_star_shaped}

By \Cref{theorem:minimax_rate}, the sampling component of the rate for estimating $\Sigma$ is $\delta^{\star 2} \wedge d^2$, for \[\delta^\star = \sup\left\{ \delta\geq 0:\, \frac{N}{\Upsilon^2} \delta^2\leq \log M^{\operatorname{loc}}_{\mathcal{U}_q}(\delta,c) \right\}.\]
Now note that 
\begin{align*}
    \mathcal{U}_q(s_0(n)) &\subseteq \left\{ \|\Sigma\|_q^q \leq n s_0(n),  0 < c_0 \leq \lambda_{\min}(\Sigma) \leq \lambda_{\max}(\Sigma) \leq c_1 \right\} 
    \\ &\subseteq \left\{ \|\Sigma\|_q^q \leq n s_0(n) \right\}  \eqqcolon U^{+}_q(ns_0(n)) 
\end{align*} and thus 
\[\delta^\star \lesssim \sup\bigg\{\delta \geq 0 : N \delta^2 \leq \log M_{\mathcal{U}_q^+}^{\operatorname{loc}}(\delta,c)\bigg\}.\]
Further note that $U^{+}$ is the $\ell_q$ ball with dimension $n^2$ and radius $(ns_0(n))^{1/q}$. 

We now split into cases based on the value of $q$.

\medskip
\noindent
\textbf{Case 1: $q \in (0,1]$.}
We split this case further into two cases based on the relative magnitude of $s_0(n) \big(\frac{\log n}{N}\big)^{1-q/2} \lessgtr \frac{n}{N}$.
\\\textbf{Case 1A: $s_0(n) \big(\frac{\log n}{N}\big)^{1-q/2} \leq \frac{n}{N}$.}
By the argument of \textcite{neykov_minimax_2023} (see also \textcite{guedon_euclidean_2000}) we see that the log cardinality of a maximal $\delta-$packing set of $U_q^+(R)$ is upper-bounded up to constants depending on $B_1$ and $q$ by $(R^{-1}\delta)^{-\frac{2q}{2-q}}\log\left(n^2(R^{-1}\delta)^{\frac{2q}{2-q}}\right)$ for $R^{-1}\delta \gtrsim (n^2)^{1/2-1/q}$ and $n^2(R^{-1}\delta)^{2q/(2-q)} > B_1 > 1$. 
Define $\tilde\delta^2 = n s_0(n) \big(\frac{\log n}{N}\big)^{1-q/2}$. Note that 
\begin{align*}
    (R^{-1}\tilde\delta)^{-\frac{2q}{2-q}} &= \bigg(n^{1/2-1/q}s_0(n)^{1/2-1/q} \Big(\frac{\log n}{N}\Big)^{1/2-q/4} \bigg)^{-\frac{2q}{2-q}}
    \\ &= ns_0(n)\Big(\frac{\log n}{N}\Big)^{-q/2}.
\end{align*}
Let $z \coloneqq n^2\big(\tilde\delta (cR)^{-1}\big)^{\frac{2q}{2-q}}$ and let $B_1 > 1$ be a positive constant. Split further into cases based on $z \lessgtr B_1$.

\medskip
\noindent
\textbf{Case 1A.1: $z > B_1$.}
First see that \begin{align*}
\log \mathcal{M}_{\mathcal{U}_q}^{\operatorname{loc}}(\tilde\delta,c) &\lesssim (R^{-1}\tilde\delta/c)^{-\frac{2q}{2-q}}\log\left(n^2(R^{-1}\tilde\delta/c)^{\frac{2q}{2-q}}\right).
\end{align*}
Recall that we assume $s_0(n) \big(\frac{\log n}{N}\big)^{1-q/2} \leq \frac{n}{N}$ and $N \gtrsim n^2$ and apply \Cref{lemma:delta_properties} to see that $R^{-1}\tilde\delta \gtrsim n^{1-2/q}$ and
\begin{align*}
\log \mathcal{M}_{\mathcal{U}_q}^{\operatorname{loc}}(\tilde\delta,c) &\lesssim (R^{-1}\tilde\delta/c)^{-\frac{2q}{2-q}}\log\left(n^2(R^{-1}\tilde\delta/c)^{\frac{2q}{2-q}}\right) 
\\ &\lesssim (R^{-1}\tilde\delta)^{-\frac{2q}{2-q}}\log n.
\\ &= ns_0(n)\Big(\frac{\log n}{N}\Big)^{-q/2} \log n
\\ &= N\tilde\delta^2.
\end{align*} 
\medskip
\noindent
\textbf{Case 1A.2: $z \leq B_1$.}
Note that $(R^{-1}\tilde\delta\big)^{-2q/(2-q)} = \frac{n^2}{zc^{2q/(2-q)}}$ and as in the previous case we still have
\begin{align*}
(R^{-1}\tilde\delta)^{-\frac{2q}{2-q}}\log n=  N\tilde\delta^2.
\end{align*} 
So we have 
\begin{align*}
    N\tilde\delta^2 &= (R^{-1}\tilde\delta)^{-\frac{2q}{2-q}}\log n
    \\ &= \frac{n^2}{zc^{2q/(2-q)}}\log n
    \shortintertext{or}
    n^2 &= \frac{N\tilde\delta^2}{\log n}zc^{2q/(2-q)}
\end{align*}
Now, from  \textcite[Example 5.8]{wainwright_highdimensional_2019} we have that, for $z$ bounded by constant $B_1$,
\begin{align*}
    \log \mathcal{M}_{\mathcal{U}_q}^{\operatorname{loc}}(\tilde\delta,c) \leq n^2 \log(1 + 2c) = \frac{N\tilde\delta^2}{\log n}zc^{2q/(2-q)}\log(1+2c)\lesssim N\tilde{\delta}^2
\end{align*}

Now, for both cases define $B_2$ such that $\log \mathcal{M}_{\mathcal{U}_q}^{\operatorname{loc}}(\tilde\delta,c) \leq B_2N\tilde\delta^2$. Then for $A > \sqrt{B_2}\Upsilon \vee 1$
\begin{align*}
    \log \mathcal{M}_{\mathcal{U}_q}^{\operatorname{loc}}(A \tilde\delta,c) \le\log \mathcal{M}_{\mathcal{U}_q}^{\operatorname{loc}}(\tilde\delta,c) \leq B_2N\tilde\delta^2 < \frac{N(A\tilde\delta)^2}{\Upsilon^2}.
\end{align*}
Therefore, $\delta^\star \leq A\tilde\delta$ and $\delta^{\star 2} \lesssim n s_0(n)\big(\frac{\log n}{N}\big)^{1-q/2}$.
\\\textbf{Case 1B: $s_0(n) \big(\frac{\log n}{N}\big)^{1-q/2} \geq \frac{n}{N}$.}
Recall that $N\frac{\delta^2}{\Upsilon^2} \leq \log \mkloc(\delta,c) \lesssim n^2$ and thus we immediately get $N\delta^2 \lesssim n^2 \implies \delta^2 \lesssim \frac{n^2}{N}$.

\medskip
\noindent
\textbf{Case 2: $q=0$.}
Set $s = \min\{n,\lfloor s_0(n)\rfloor\}$ and our set of matrices becomes:
\begin{align*}
    \mathcal{U}_0(s) \coloneqq \bigg\{\Sigma: \max_i \sum_j\indicator{|\sigma_{ij}| > 0}\leq s,\lambda(\Sigma) \in [c_0,c_1] \bigg\}. 
\end{align*}
Using the eigenvalue upper bound, discarding the lower bound, and loosening the row-wise sparsity to overall matrix sparsity we get that 
\begin{align*}
    \mathcal{U}_0(s) \subseteq \big\{\Sigma: \|\Sigma\|_0 \leq ns, \|\Sigma\|_F \leq \sqrt{n}c_1 \big\}
\end{align*}
Notice that 
\begin{align*}
    \mathcal{U}_0(s) \subseteq \bigg\{\Sigma: \|\Sigma\|_0 \leq ns, \|\Sigma\|_F \leq \sqrt{n}c_1 \bigg\} = \bigcup_{|S|=ns}\mathbb{B}_2^{S}(\sqrt{n}c_1).
\end{align*}
That is, this set is a union of $\binom{n^2}{ns}$ $\ell_2^{n^2}$ balls of radius $\sqrt{n}c_1$. 
Now for a local center $\Sigma_0$ we consider the local metric entropy $B_F(\Sigma_0,\delta) \cap \mathcal{U}_0(s)$. Fix the support $S$. From \textcite[Example 5.8]{wainwright_highdimensional_2019} we have that the local metric entropy of this set is 
\begin{align*}
    \log \mkloc(\delta,c) &\leq ns\log\frac{en}{s} + ns\log(1 + 2c)
    \\ &\lesssim ns \log \frac{en}{s}
\end{align*}
due to the fact that
\begin{align*}
    \binom{n^2}{ns} &= \frac{n^2!}{(ns)!(n^2-ns)!} 
    \\ &= \frac{n^2 \times ... \times (n^2 - ns + 1)}{(ns)!}
    \\ &\leq \frac{n^{2ns}}{(ns/e)^{ns}} = \bigg(\frac{en^2}{ns}\bigg)^{ns} = \bigg(\frac{en}{s}\bigg)^{ns}
\end{align*}
with the inequality due to Stirling's approximation. Combining with the bound \\$N\frac{\delta^2}{\Upsilon^2}\leq \log\mkloc(\delta,c) \lesssim n^2$ gives the required bound. 

% Again following \textcite{neykov_minimax_2023} we can use this to approximate the local metrix entropy.
% Setting $N\delta^2 = ns\log\frac{en}{s} + ns\log\bigg(1 + \frac{2\sqrt{n}c_1}{\delta}\bigg)$ gives
% \begin{align*}
%     N\delta^2 &= ns\log\bigg(\frac{en}{s}\bigg(1 + \frac{2\sqrt{n}c_1}{\delta}\bigg)\bigg).
% \end{align*}
% We will show that this gives $\delta \asymp ns\frac{\log n}{N}$. First let $\delta_1^2 = cns\log n / N$. Then we get 
% \begin{align*}
%     N\delta_1^2 &= ns \log \frac{en}{s}\bigg(1 + 2\frac{\sqrt{n}c_1}{\sqrt{cns\log n/N}}\bigg)
%     \\ &= ns \log\Bigg(\frac{en}{s}\bigg( 1 + 2\sqrt{\frac{N}{cs\log n}}\bigg)\Bigg) 
%     \\ &\overset{(i)}{\gtrsim} ns \log\Bigg(\frac{en}{s}\bigg( 1 + \sqrt{\frac{n}{\log n}}\bigg)\Bigg) 
%     \\ &\overset{(ii)}{\gtrsim} ns \log n
% \end{align*}
% where $(i)$ is due to $N \asymp n^2$ and $s \lesssim n$ and $(ii)$ is due to $\log(1 + x) \geq \frac{1}{4}\log x$. On the other hand,
% \begin{align*}
%     N\delta_1^2 &= ns \log \frac{en}{s}\bigg(1 + 2\frac{\sqrt{n}c_1}{\sqrt{cns\log n/N}}\bigg)
%     \\ &= ns \log\Bigg(\frac{en}{s}\bigg( 1 + 2\sqrt{\frac{N}{cs\log n}}\bigg)\Bigg) 
%     \\ &\overset{(i)}{\lesssim} ns \log\Bigg(\frac{en}{s}\bigg( 1 + n\bigg)\Bigg) 
%     \\ &\overset{(ii)}{\lesssim}  ns \log n
% \end{align*}
% where $(i)$ is due to $N \asymp n^2$  and $(ii)$ is due to $\log(1 + x) \leq \log x$.
\end{delayedproof}

\subsection{Sparse Spiked-Covariance model}

\begin{corollary}\label[corollary]{cor:sparse_spiked}
    Consider the set 
    \begin{align*} 
        K^+ \coloneqq \left\{ \Sigma: \Sigma = I + xx\T, x \in K^0\textrm{ star shaped around $0$, $\|x\|_2 \leq K_{\max}$} \right\}
    \end{align*}
    for constant $K_{\max}$. Let $\mathcal{M}_{K^0}^{\operatorname{loc}}(\cdot,\cdot)$ denote local
metric entropy on $K^0$ with respect to the Euclidean norm, and let
$\mathcal{M}_{K^+}^{\operatorname{loc}}(\cdot,\cdot)$ denote local
metric entropy on $K^+$ with respect to the Frobenius norm. Define 
    \[
    \delta_+^\star
    \coloneqq
    \sup_\delta \left\{
        \delta>0:
        N\frac{\delta^2}{\Upsilon^2}
        \leq
        \log
        \mathcal{M}_{K^+}^{\operatorname{loc}}(\delta,c)
    \right\},
    \]
    and
    \[
        \delta^{\star} \coloneqq \sup_\delta \Big\{ N\frac{\delta^2}{\Upsilon^2}
        \leq
        \log 2
        +
        \log
        \mathcal{M}_{K^0}^{\operatorname{loc}}
        \left(
            \frac{\delta}{K_{\max}},
            Cc
        \right)\Big\}
    \] for some universal constant $C >0$ and $N \gtrsim \sup_{\delta} \log\mathcal{M}_{K^+}^{\operatorname{loc}}(\delta,c)$. 
    Then $\delta_{+}^\star \leq \delta^\star$ and we can upper bound the minimax rate as
    \begin{align}
        \mathfrak{M}(X) \lesssim  \varepsilon^2 \log^2(1/\varepsilon) \bigvee \delta^{\star2}.
        \label{eq:k_sparse_rate}
    \end{align} 
\end{corollary}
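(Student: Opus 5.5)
The plan has two parts. First, apply \Cref{theorem:minimax_rate} to the set $K^+$. Second, compare local metric entropies through the map $x\mapsto I+xx\T$ to show $\delta_+^\star\le\delta^\star$.

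\emph{Verifying the conditions.} $K^+$ is star-shaped around $I$: for $\alpha\in[0,1]$ we have $\alpha(I+xx\T)+(1-\alpha)I = I+(\sqrt{\alpha}x)(\sqrt{\alpha}x)\T$. Since $K^0$ is star-shaped around $0$, $\sqrt\alpha x\in K^0$, and $\|\sqrt\alpha x\|_2\le K_{\max}$. The eigenvalues of $I+xx\T$ lie in $[1,1+K_{\max}^2]$, so Condition~\ref{cond:K_set} holds with constants $c_0=1$, $c_1=1+K_{\max}^2$. The diameter is bounded, so Condition~\ref{cond:K_star} holds. With the assumed sample size, \Cref{theorem:minimax_rate} gives $\mathfrak{M}(X)\lesssim \varepsilon^2\log^2(1/\varepsilon)\vee\delta_+^{\star2}$. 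It therefore suffices to show $\delta_+^\star\le\delta^\star$.

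\emph{Entropy comparison.} I will show that for every $\delta$,
\[
\log\mathcal{M}^{\operatorname{loc}}_{K^+}(\delta,c)\le \log 2+\log\mathcal{M}^{\operatorname{loc}}_{K^0}\big(\delta/K_{\max},Cc\big).
\]
Then any $\delta$ feasible in the definition of $\delta_+^\star$ is also feasible for $\delta^\star$, and taking suprema gives the claim.

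To prove the inequality, fix a center $I+x_0x_0\T$ and a $\delta/c$-packing $\{I+x_ix_i\T\}$ of $B_F(I+x_0x_0\T,\delta)\cap K^+$. The key estimates are the following.
\begin{itemize}
\item Upper Lipschitz: $\|xx\T-yy\T\|_F\le(\|x\|_2+\|y\|_2)\|x-y\|_2\le 2K_{\max}\|x-y\|_2$. Hence a $\delta/c$-separation in Frobenius norm forces $\|x_i-x_j\|_2\ge \delta/(2K_{\max}c)$.
\item Reverse estimate, modulo sign: $\min(\|x-y\|_2,\|x+y\|_2)\lesssim \|xx\T-yy\T\|_F/\max(\|x\|_2,\|y\|_2)$.
\end{itemize}
Replacing each $x_i$ by $\pm x_i$ so that it is aligned with $x_0$, the points satisfy $\|x_i-x_0\|_2\lesssim\delta/\|x_0\|_2$. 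The sign ambiguity is what produces the $\log 2$ term: each packing point can lie in one of two sign-halves, so at most a factor $2$ is lost.

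I then aim to place the aligned $x_i$ in a Euclidean ball of radius $\asymp\delta/K_{\max}$ around a suitable center in $K^0$. The separation $\delta/(2cK_{\max})$ makes them a $(\delta/K_{\max})/(Cc)$-packing of that ball for a universal $C$. If the ball radius exceeds $\delta/K_{\max}$ by a constant factor, I will cover it by a constant number of smaller balls. Absorbing such constant radius changes into $Cc$ uses \Cref{lemma:loc_metric_entropy_noninc} together with a standard rescaling argument of the type in \Cref{lemma:2r_bound}.

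\emph{Main obstacle.} The hard step is the reverse Lipschitz estimate when $\|x_0\|_2$ is small relative to $\delta/K_{\max}$, where $xx\T$ is badly conditioned as a function of $x$. My first attempt is to split into two regimes.
\begin{itemize}
\item If $\|x_0\|_2\gtrsim\sqrt{\delta}$, use the aligned-difference bound above.
\item If all $\|x_i\|_2\lesssim\sqrt\delta$, treat the packing as lying near the star center $0$ and compare against the local packing of $K^0$ around $0$.
\end{itemize}
If neither regime fits the target radius $\delta/K_{\max}$, the fallback is to enlarge $C$ and use the monotonicity in \Cref{lemma:loc_metric_entropy_noninc} to absorb the mismatch.
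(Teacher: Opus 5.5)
Your outer structure matches the paper's. You verify that $K^+$ is star-shaped around $I$ with eigenvalues in $[1,1+K_{\max}^2]$, invoke \Cref{theorem:minimax_rate}, and reduce $\delta_+^\star\le\delta^\star$ to an entropy comparison. That comparison uses a small-norm/large-norm split and two sign classes, which is where the $\log 2$ comes from. The gap is in the scale bookkeeping of the comparison.

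You take the separation of the $x_i$ from the global bound $\|x_i-x_j\|_2\ge\delta/(2K_{\max}c)$. You then want the aligned $x_i$ inside a ball of radius $\asymp\delta/K_{\max}$, up to a constant. But your reverse estimate only gives radius $\asymp\delta/\|x_0\|_2$, and this radius cannot be improved. For example, take $K^0=B_2(0,K_{\max})$, $\|x_0\|_2\le K_{\max}/2$ and $\|x_0\|_2^2>2\delta$. Then points $x_i=x_0+v_i$ with $v_i\perp x_0$ and $\|v_i\|_2=\delta/(2\|x_0\|_2)$ lie in $K^0$ and satisfy $\|x_ix_i\T-x_0x_0\T\|_F<\delta$.

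The excess factor $K_{\max}/\|x_0\|_2$ is unbounded; take $\|x_0\|_2^2\approx 2\delta$ and let $\delta\to0$. So with separation $\delta/(2K_{\max}c)$ the effective radius factor is $\asymp K_{\max}c/\|x_0\|_2$. Your fallback fails on three counts:
\begin{itemize}
\item no universal $C$ can absorb this factor;
\item covering by balls of radius $\delta/K_{\max}$ costs far more than a constant number of balls;
\item iterating \Cref{lemma:2r_bound} multiplies the log-entropy by a growing factor instead of adding $\log 2$.
\end{itemize}
The same mismatch appears in your small-norm regime: radius $\asymp\sqrt\delta$ against separation $\delta/(2K_{\max}c)$ gives a factor $\asymp K_{\max}c/\sqrt\delta$.

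The missing idea is to use the local Lipschitz constant instead of $2K_{\max}$, so that the separation scales with the containing radius.
\begin{itemize}
\item When $\|x_0\|_2^2>2\delta$: since $|\|x_i\|_2^2-\|x_0\|_2^2|\le\delta$, every $\|x_i\|_2\le\sqrt{3/2}\,\|x_0\|_2$. Then $\|x_ix_i\T-x_jx_j\T\|_F\le(\|x_i\|_2+\|x_j\|_2)\|x_i-x_j\|_2$ gives separation $\delta/(\sqrt6\,c\|x_0\|_2)$. This is proportional to the containing radius $4\delta/\|x_0\|_2$, with ratio $4\sqrt6\,c$.
\item When $\|x_0\|_2^2\le2\delta$: the radius is $\sqrt{3\delta}$ and the separation is $\sqrt\delta/(2\sqrt3\,c)$, so the ratio is $6c$.
\end{itemize}
Once separation scales with radius, \Cref{lemma:loc_metric_entropy_noninc} lowers the radius to $\delta/K_{\max}$ at a fixed factor, with no covering step. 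In the small-norm case this needs $\sqrt{3\delta}\ge\delta/K_{\max}$, for example $\delta\le K_{\max}^2$. For $\delta>K_{\max}^2$ your global bound is the right one, since then every $x_i\in B_2(0,K_{\max})\subseteq B_2(0,\delta/K_{\max})$.

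Separately, $K^0$ need not be symmetric, so you cannot literally replace $x_i$ by $-x_i$. Instead, center the negative sign class at one of its own members, which lies in $K^0$, and double the radius.
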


\begin{lemma}[Local entropy of rank-one spike classes]
\label[lemma]{lemma:spiked_local_entropy}
Let $K^0\subseteq \mathbb{R}^n$ be star-shaped around $0$ and suppose that
\[
    \sup_{x\in K^0}\|x\|_2\leq K_{\max}.
\]
Define
\[
    K^+
    \coloneqq
    \left\{
        I+xx^\top:\ x\in K^0
    \right\}.
\]
Let $\mathcal{M}_{K^0}^{\operatorname{loc}}(\cdot,\cdot)$ denote local
metric entropy on $K^0$ with respect to the Euclidean norm, and let
$\mathcal{M}_{K^+}^{\operatorname{loc}}(\cdot,\cdot)$ denote local
metric entropy on $K^+$ with respect to the Frobenius norm. Then there
is a universal constant $C>0$ such that, for every $c>0$ and every
$0<\delta\leq K_{\max}^2$,
\begin{equation}
    \mathcal{M}_{K^+}^{\operatorname{loc}}(\delta,c)
    \leq
    2\,
    \mathcal{M}_{K^0}^{\operatorname{loc}}
    \left(
        \frac{\delta}{K_{\max}},
        Cc
    \right).
    \label{eq:spiked_local_entropy}
\end{equation}
Consequently,
\begin{equation}
    \log \mathcal{M}_{K^+}^{\operatorname{loc}}(\delta,c)
    \leq
    \log 2
    +
    \log
    \mathcal{M}_{K^0}^{\operatorname{loc}}
    \left(
        \frac{\delta}{K_{\max}},
        Cc
    \right).
    \label{eq:spiked_local_entropy_log}
\end{equation}
\end{lemma}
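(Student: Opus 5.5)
Fix a center $\Sigma_0=I+x_0x_0^\top\in K^+$ and a $\delta/c$-packing $\{I+x_ix_i^\top\}_{i=1}^M$ of $B_F(\Sigma_0,\delta)\cap K^+$. The plan is to transfer this packing to $K^0$ and show that, after a sign choice, at least half of the transferred points form a $\delta/(CcK_{\max})$-packing of a Euclidean ball of radius $\delta/K_{\max}$ centered at a point of $K^0$. This gives $M\le 2\mathcal{M}^{\operatorname{loc}}_{K^0}(\delta/K_{\max},Cc)$, and taking logs gives \eqref{eq:spiked_local_entropy_log}.

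The key tool is a two-sided comparison between $\|xx^\top-yy^\top\|_F$ and $\min(\|x-y\|_2,\|x+y\|_2)$. For the upper direction,
\[
xx^\top-yy^\top=\tfrac12\big[(x-y)(x+y)^\top+(x+y)(x-y)^\top\big],
\]
so $\|xx^\top-yy^\top\|_F\le\|x-y\|_2\|x+y\|_2\le 2K_{\max}\min(\|x-y\|_2,\|x+y\|_2)$. This gives the packing transfer: $\|x_i-x_j\|_2\ge\delta/(2cK_{\max})$ and likewise $\|x_i+x_j\|_2\ge \delta/(2cK_{\max})$. The sign ambiguity $x\mapsto -x$ leaves $xx^\top$ unchanged, which is where the factor $2$ enters. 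Pick signs $\epsilon_i\in\{\pm1\}$ so that every $\epsilon_ix_i$ is close to $x_0$ (or to $-x_0$). Since the packing is separated in both $\|x_i-x_j\|$ and $\|x_i+x_j\|$, any choice of signs keeps the points $\epsilon_ix_i$ $\delta/(2cK_{\max})$-separated. Points near $-x_0$ are handled by reflecting, and star-shapedness around $0$ gives no symmetry, so I would split into two groups (near $x_0$, near $-x_0$). The larger group has at least $M/2$ elements, and each group lies in $K^0$ near a point of $\pm K^0$. If the $-x_0$ group is the larger one I would use the original $x_i$ (which lie in $K^0$) near $-x_0$. For this to work I need the center to lie in $K^0$, so I would instead recenter at one of the packing points itself, say $x_1$, and use a ball of twice the radius, absorbing the factor into $C$.

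The main obstacle is the lower direction: from $\|x_ix_i^\top-x_0x_0^\top\|_F\le\delta$, deduce $\min(\|x_i-x_0\|,\|x_i+x_0\|)\lesssim\delta/K_{\max}$. This fails when $\|x_0\|$ is small, since then $\|x_i\|^2\le\|x_0\|^2+\delta$ only gives $\|x_i\|\lesssim\sqrt{\delta}$, which can exceed $\delta/K_{\max}$ when $\delta\le K_{\max}^2$. I would attempt the bound $\|xx^\top-yy^\top\|_F\ge\tfrac{1}{\sqrt2}\max(\|x\|,\|y\|)\min\|x\mp y\|$, which holds by expanding in the plane spanned by $x,y$. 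This gives the radius $\delta/\max(\|x_0\|,\|x_i\|)$, which is good when the vectors have norm of order $K_{\max}$. For small norms I would instead rescale the packing radius: the Euclidean separation improves to $\delta/(c\,\|x_i\|+c\,\|x_j\|)$. Then I would use the monotonicity in Lemma \ref{lemma:loc_metric_entropy_noninc}, together with star-shapedness of $K^0$ (shrinking toward $0$), to compare a radius-$r$ local packing with ratio $Cc$ to one at radius $\delta/K_{\max}$. The point is that the ratio, not the absolute scale, is what the constant $C$ controls.

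I expect this small-norm regime to need the most care, and it may force $C$ to absorb the factor $\sqrt2$ and the doubling.
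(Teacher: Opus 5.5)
\textbf{Overall assessment.} Once its loose end is tied, your argument is the paper's proof. You transfer the packing through $\|x_ix_i^\top-x_jx_j^\top\|_F\le(\|x_i\|_2+\|x_j\|_2)\|x_i-x_j\|_2$, which your symmetric factorization gives. You split the points by the sign of $\langle x_i,x_0\rangle$, and recenter the minus class at one of its own members with doubled radius, since $K^0$ need not be symmetric. You then exhibit a packing of ratio $O(c)$ at some radius $r\ge\delta/K_{\max}$ and invoke \Cref{lemma:loc_metric_entropy_noninc} to pass down to radius $\delta/K_{\max}$. Your remark that ``the ratio, not the absolute scale, is what $C$ controls'' is exactly the point.

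\textbf{Your lower-direction inequality.} The bound $\|xx^\top-yy^\top\|_F\ge\frac{1}{\sqrt2}\max(\|x\|_2,\|y\|_2)\min(\|x-y\|_2,\|x+y\|_2)$ is correct. Normalize $\|x\|_2=1\ge\|y\|_2=b$ and set $t=|\cos\angle(x,y)|$. The difference of the squared sides is $\tfrac12-\tfrac12b^2+b^4+bt-2b^2t^2$, which is concave in $t$. It equals $\tfrac12-\tfrac12b^2+b^4\ge0$ at $t=0$ and $(1-b)^2(b^2+2b+\tfrac12)\ge0$ at $t=1$. This is a tidy substitute for the paper's computation $\|x_i-s_ix_0\|_2^2=(b_i-a)^2+2ab_i(1-t_i)$: it gives radius $\sqrt2\,\delta/a$ instead of $2\delta/a$ and needs no lower bound on $b_i$.

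\textbf{The gap: where to split.} As written, the sketch does not close, because the case split is misplaced. First, the separation $\delta/(2cK_{\max})$ from your first tool has to be replaced by $\delta/(c(\|x_i\|_2+\|x_j\|_2))$ throughout. Against the radius you actually need, of order $\delta/\|x_0\|_2$ or $\sqrt\delta$, it gives a ratio of order $cK_{\max}/\max(\|x_0\|_2,\sqrt\delta)$, which is not uniformly bounded. Second, the regime where your lower bound works is not ``norms of order $K_{\max}$''; the threshold is $a^2\asymp\delta$, with $a=\|x_0\|_2$ and $b_i=\|x_i\|_2$.
\begin{itemize}
\item If $a^2>2\delta$: $b_i^2\le a^2+\delta$ gives $b_i\le\sqrt{3/2}\,a$. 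Hence $\|x_i-x_j\|_2\ge\delta/(\sqrt6\,ca)$, against a radius $O(\delta/a)$. Monotonicity applies because $\delta/a\ge\delta/K_{\max}$, using $a\le K_{\max}$.
\item If $a^2\le2\delta$: the lower-direction bound is the wrong tool. Its radius $\sqrt2\,\delta/\max(a,b_i)$ is unbounded as $a,b_i\to0$, while the minimal separation can be as small as order $\sqrt\delta/c$. Instead, skip the sign split and use the trivial norm bound you already wrote down. All $x_i$ lie in $B_2(0,\sqrt{3\delta})\cap K^0$, with $0\in K^0$ by star-shapedness. The separation is at least $\sqrt\delta/(2\sqrt3\,c)$, a ratio of $6c$. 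Finally $\sqrt{3\delta}\ge\delta/K_{\max}$ precisely because $\delta\le K_{\max}^2$; this is the only place that hypothesis enters.
\end{itemize}
These are the paper's Cases 1 and 2, with ratios $6c$ and $4\sqrt6\,c$, so $C=10$ suffices. With this split made explicit, your proof is complete.
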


\begin{remark}
\label{rem:spiked_local_entropy_fixed_point}
The preceding lemma gives the local, rather than merely global,
entropy comparison needed for the sparse spiked-covariance example.
In particular, if
\[
    \delta_+^\star
    \coloneqq
    \sup\left\{
        \delta>0:
        N\delta^2
        \leq
        \log
        \mathcal{M}_{K^+}^{\operatorname{loc}}(\delta,c)
    \right\},
\]
then, up to changing the fixed radius factor $c$ by a universal
constant, $\delta_+^\star$ is bounded above by the largest $\delta$
satisfying
\[
    N\delta^2
    \leq
    \log 2
    +
    \log
    \mathcal{M}_{K^0}^{\operatorname{loc}}
    \left(
        \frac{\delta}{K_{\max}},
        Cc
    \right).
\]
Thus the local entropy of the rank-one covariance class is controlled
by the local Euclidean entropy of the spike class at scale
$\delta/K_{\max}$.
\end{remark}
\begin{delayedproof}{cor:sparse_spiked}
    Since $K^0$ is star-shaped, $K^+$ is also star shaped around $I$, since if $I + x x\T$ is in $K^+$, then for any $\alpha \in [0,1]$, $\alpha(I + x x\T)  + (1-\alpha)I = I + \alpha xx\T$ is in $K^+$ since $K^0$ is star-shaped around $0$. The identity matrix in the construction implies $\lambda_{\min} \geq 1$. The corruption part of the rate is immediate. We thus show a bound on the local metric entropy to complete the proof.
    Consider an arbitrary point $\Sigma_0 \in K^+$. We need to show that we can form a set of points $\Sigma_1 = I + x_1x_1\T,...,\Sigma_M = I + x_Mx_M\T$ in $K^+$ such that for any $\Sigma_0 \in K^+$,  $\exists i$ such that $\|\Sigma_i - \Sigma_0\| \leq \delta^\star$. 

    \textbf{Case 1: $\delta_+^\star \leq K_{\max}^2$. } The conclusion follows by \Cref{lemma:spiked_local_entropy}. 
    \\\textbf{Case 2: $\delta_+^\star > K_{\max}^2$. } 
    Note that 
    \begin{align*}
        \| xx\T - yy\T\|_F &\leq \|x(x-y)\T\|_F + \|(x-y)y\T\|_F
        \\ &= \|x\|_2\|x-y\|_2 + \|x-y\|_2\|y\|_2
        \\ &\leq 2K_{\max}\|x-y\|_2
    \end{align*}
    and therefore $x \mapsto xx\T$ is $2K_{\max}$-Lipschitz.
    Fix an arbitrary
    \[
        \Sigma_0=I+x_0x_0^\top\in K^+,
    \]
    and let
    \[
        \{\Sigma_i=I+x_ix_i^\top\}_{i=1}^M
        \subseteq
        B_F(\Sigma_0,\delta)\cap K^+
    \]
    be a $\delta/c$-packing in Frobenius norm.  We will bound $M$
    uniformly over $\Sigma_0$. 
    Now, the Lipschitz inequalty gives that for all $i \neq j$
    \begin{align*}
        \frac{\delta}{c} \leq \|\Sigma_i - \Sigma_j\|_F \leq 2K_{\max}\|x_i-x_j\|_2 
    \end{align*}
    and thus the $x_i$ form a $\frac{\delta}{2K_{\max}c}$-packing. Now, since $\delta > K_{\max}^2$ we have that $\frac{\delta}{K_{\max}} > K_{\max}$ and thus for all $x_i \in K^0$ we have $x_i \in B_2\big(0,\frac{\delta}{K_{\max}}\big) \cap K^0$. Furthermore, $0 \in K^0$ by star-shapedness, so we can bound $M \leq \mathcal{M}_{K^0}^{\operatorname{loc}}\big(\frac{\delta}{K_{\max}},2c\big)$. Taking the supremum over $\Sigma_0$ gives the desired result.
\end{delayedproof}

A good example of a set of sparse-spiked covariance matrices is \begin{align*}
    K^1 \coloneqq \{x \in \mathbb{R}^n : \|x\|_2 \leq 1, x \mbox{ is } s-\mbox{sparse}\}.
\end{align*} 
This is a star shaped set around $0$, which means that $K_1^+$ is star-shaped around $I$. From \textcite{prasadan_information_2025} we have that the log local metric entropy of ($K^1$) admits the uniform upper bound (up to constants) of $s\log\left(1+\frac{n}{2s}\right)$ we get tighter bounds on the minimax rate:

\begin{align*}
     \mathfrak{M}(X) \lesssim \varepsilon^2 \log^2(1/\varepsilon)\bigvee \frac{s}{N}\log\left(1 + \frac{n}{2s}\right).
\end{align*}

\subsection{Gene Co-Expression Data}

In many biological applications, such as gene co-expression estimation, data collection can suffer from technical problems such as failed libraries or degraded RNA corrupting a small fraction of samples. We can look at this situation through the lens of adversarial corruption. Assume the true log-transformed gene expression data is Gaussian and say the adversary picks $\varepsilon = 1/32 \wedge k/n$ proportion of the samples (for some arbitrary $k$) and changes their lowest-value points to zeroes. Assume the conditions of Corollary \ref{cor:lq_ball} hold additionally. Then we can similarly upper bound the worst-case Frobenius error as \begin{align*}\frac{1}{n} \mathfrak{M}(X) \lesssim \frac{1}{n}\log^2(1/32)\frac{1}{32^2} \bigvee \left( s_0(n)\left(\frac{\log n}{N}\right)^{1-q/2} \bigwedge \frac{n}{N} \right).\end{align*}

\section{Conclusions and Future Work}

We have proven a near-minimax rate in the Frobenius norm of the covariance matrix in a star-shaped set for the multivariate Gaussian model under adversarial corruption. We then used this rate to prove results for several example situations, most notably when the covariance matrices are weakly sparse (extending the results of \textcite{bickel_covariance_2008}), for zero-inflated high-dimensional data, and the spiked covariance model. Unfortunately the algorithm needed to prove the upper bound is not computationally tractable, so the natural extension of this work is to find an implementable algorithm for which these results (or some subset of them) hold. Additionally, in the future we seek to extend these results to multi-spiked covariance models as well.

\subsection*{AI usage disclosure}
ChatGPT Astra was used to assist the authors in proofreading the results and proofs for correctness, consistency, and readability.

\printbibliography

@article{bickel_covariance_2008,
  title = {Covariance Regularization by Thresholding},
  author = {Bickel, Peter J. and Levina, Elizaveta},
  date = {2008-12},
  journaltitle = {The Annals of Statistics},
  volume = {36},
  number = {6},
  pages = {2577--2604},
  publisher = {Institute of Mathematical Statistics},
  issn = {0090-5364, 2168-8966},
  doi = {10.1214/08-AOS600},
}

@article{fan_large_2013,
  title = {Large {{Covariance Estimation}} by {{Thresholding Principal Orthogonal Complements}}},
  author = {Fan, Jianqing and Liao, Yuan and Mincheva, Martina},
  date = {2013-09-01},
  journaltitle = {Journal of the Royal Statistical Society Series B: Statistical Methodology},
  shortjournal = {Journal of the Royal Statistical Society Series B: Statistical Methodology},
  volume = {75},
  number = {4},
  pages = {603--680},
  issn = {1369-7412},
  doi = {10.1111/rssb.12016},
}

@online{abdalla_covariance_2023,
  title = {Covariance {{Estimation}}: {{Optimal Dimension-free Guarantees}} for {{Adversarial Corruption}} and {{Heavy Tails}}},
  shorttitle = {Covariance {{Estimation}}},
  author = {Abdalla, Pedro and Zhivotovskiy, Nikita},
  date = {2023-07-20},
  eprint = {2205.08494},
  eprinttype = {arXiv},
  eprintclass = {math},
  doi = {10.48550/arXiv.2205.08494},
  url = {http://arxiv.org/abs/2205.08494},
  urldate = {2025-07-10}
}

@online{bellec_sharp_2016,
  title = {Sharp Oracle Inequalities for {{Least Squares}} Estimators in Shape Restricted Regression},
  author = {Bellec, Pierre C.},
  date = {2016-08-08},
  eprint = {1510.08029},
  eprinttype = {arXiv},
  eprintclass = {math},
  doi = {10.48550/arXiv.1510.08029},
  url = {http://arxiv.org/abs/1510.08029},
  urldate = {2025-07-10}
}

@article{chen_robust_2018,
  title = {Robust Covariance and Scatter Matrix Estimation under {{Huber}}’s Contamination Model},
  author = {Chen, Mengjie and Gao, Chao and Ren, Zhao},
  date = {2018-10-01},
  journaltitle = {The Annals of Statistics},
  shortjournal = {Ann. Statist.},
  volume = {46},
  number = {5},
  publisher = {Institute of Mathematical Statistics},
  issn = {0090-5364},
  doi = {10.1214/17-aos1607},
  url = {https://projecteuclid.org/journals/annals-of-statistics/volume-46/issue-5/Robust-covariance-and-scatter-matrix-estimation-under-Hubers-contamination-model/10.1214/17-AOS1607.full},
  urldate = {2025-07-14}
}

@article{lugosi_robust_2021,
  title = {Robust Multivariate Mean Estimation: {{The}} Optimality of Trimmed Mean},
  shorttitle = {Robust Multivariate Mean Estimation},
  author = {Lugosi, Gábor and Mendelson, Shahar},
  date = {2021-02-01},
  journaltitle = {The Annals of Statistics},
  shortjournal = {Ann. Statist.},
  volume = {49},
  number = {1},
  publisher = {Institute of Mathematical Statistics},
  issn = {0090-5364},
  doi = {10.1214/20-aos1961},
  url = {https://projecteuclid.org/journals/annals-of-statistics/volume-49/issue-1/Robust-multivariate-mean-estimation-The-optimality-of-trimmed-mean/10.1214/20-AOS1961.full},
  urldate = {2025-07-18}
}

@online{ma_volume_2013,
  title = {Volume {{Ratio}}, {{Sparsity}}, and {{Minimaxity}} under {{Unitarily Invariant Norms}}},
  author = {Ma, Zongming and Wu, Yihong},
  date = {2013-06-15},
  eprint = {1306.3609},
  eprinttype = {arXiv},
  eprintclass = {math},
  doi = {10.48550/arXiv.1306.3609},
  url = {http://arxiv.org/abs/1306.3609},
  urldate = {2025-07-19}
}

@article{neykov_minimax_2023,
  title = {On the {{Minimax Rate}} of the {{Gaussian Sequence Model Under Bounded Convex Constraints}}},
  author = {Neykov, Matey},
  date = {2023-02},
  journaltitle = {IEEE Transactions on Information Theory},
  volume = {69},
  number = {2},
  pages = {1244--1260},
  issn = {1557-9654},
  doi = {10.1109/TIT.2022.3213141},
  url = {https://ieeexplore.ieee.org/document/9914630},
  urldate = {2025-07-19}
}

@article{prasadan_information_2025,
  title = {Information Theoretic Limits of Robust Sub-{{Gaussian}} Mean Estimation under Star-Shaped Constraints},
  author = {Prasadan, Akshay and Neykov, Matey},
  year = 2026,
  month = feb,
  journal = {The Annals of Statistics},
  volume = {54},
  number = {1},
  pages = {490--515},
  publisher = {Institute of Mathematical Statistics},
  issn = {0090-5364, 2168-8966},
  doi = {10.1214/25-AOS2576},
  urldate = {2026-04-21},
  langid = {english}
}

@book{wainwright_highdimensional_2019,
  title = {High-{{Dimensional Statistics}}: {{A Non-Asymptotic Viewpoint}}},
  shorttitle = {High-{{Dimensional Statistics}}},
  author = {Wainwright, Martin J.},
  date = {2019-02-21},
  edition = {1},
  publisher = {Cambridge University Press},
  doi = {10.1017/9781108627771},
  url = {https://www.cambridge.org/core/product/identifier/9781108627771/type/book},
  urldate = {2025-07-21},
  isbn = {978-1-108-62777-1 978-1-108-49802-9}
}

@online{novikov_robust_2025,
  title = {Robust {{Mean Estimation Without Moments}} for {{Symmetric Distributions}}},
  author = {Novikov, Gleb and Steurer, David and Tiegel, Stefan},
  date = {2025-11-10},
  eprint = {2302.10844},
  eprinttype = {arXiv},
  eprintclass = {cs},
  doi = {10.48550/arXiv.2302.10844},
  url = {http://arxiv.org/abs/2302.10844},
  urldate = {2025-07-24}
}

@article{minasyan_statistically_2025,
  title = {Statistically Optimal Robust Mean and Covariance Estimation for Anisotropic {{Gaussians}}},
  author = {Minasyan, Arshak and Zhivotovskiy, Nikita},
  year = 2025,
  month = {May},
  journal = {Mathematical Statistics and Learning},
  volume = {8},
  number = {1},
  pages = {33--69},
  issn = {2520-2316},
  doi = {10.4171/msl/48},
  urldate = {2026-08-07},
  langid = {english}
}

@online{minsker_uniform_2019,
  title = {Uniform Bounds for Robust Mean Estimators},
  author = {Minsker, Stanislav},
  date = {2019-05-03},
  eprint = {1812.03523},
  eprinttype = {arXiv},
  eprintclass = {math},
  doi = {10.48550/arXiv.1812.03523},
  url = {http://arxiv.org/abs/1812.03523},
  urldate = {2025-07-24}
}

@article{diakonikolas_robust_2019,
  title = {Robust {{Estimators}} in {{High Dimensions}} without the {{Computational Intractability}}},
  author = {Diakonikolas, Ilias and Kamath, Gautam and Kane, Daniel and Li, Jerry and Moitra, Ankur and Stewart, Alistair},
  date = {2019},
  journaltitle = {Society for Industrial and Applied Mathematics},
  volume = {48},
  number = {2},
  pages = {742--864},
  doi = {10.1137/17M1126680},
  url = {http://arxiv.org/abs/1604.06443}
}

@article{diakonikolas_being_2017,
  title = {Being {{Robust}} (in {{High Dimensions}}) {{Can Be Practical}}},
  author = {Diakonikolas, Ilias and Kamath, Gautam and Kane, Daniel M. and Li, Jerry and Moitra, Ankur and Stewart, Alistair},
  date = {2017},
  journaltitle = {Proceedings of Machine Learning Research},
  volume = {70},
  pages = {999--1008},
  doi = {10.48550/arXiv.1703.00893},
  url = {http://arxiv.org/abs/1703.00893}
}

@online{cheng_robust_2018,
  title = {Robust {{Learning}} of {{Fixed-Structure Bayesian Networks}}},
  author = {Cheng, Yu and Diakonikolas, Ilias and Kane, Daniel and Stewart, Alistair},
  date = {2018-10-29},
  eprint = {1606.07384},
  eprinttype = {arXiv},
  eprintclass = {cs},
  doi = {10.48550/arXiv.1606.07384},
  url = {http://arxiv.org/abs/1606.07384},
  urldate = {2025-07-28}
}

@article{diakonikolas_robustly_2018,
  title = {Robustly {{Learning}} a {{Gaussian}}: {{Getting Optimal Error}}, {{Efficiently}}},
  author = {Diakonikolas, Ilias and Kamath, Gautam and Kane, Daniel M. and Li, Jerry and Moitra, Ankur and Stewart, Alistair},
  date = {2018-01},
  journaltitle = {Proceedings of the Twenty-Ninth Annual ACM-SIAM Symposium on Discrete Algorithms},
  pages = {2683--2702},
  doi = {10.1137/1.9781611975031},
  url = {https://epubs.siam.org/doi/book/10.1137/1.9781611975031}
}

@inproceedings{diakonikolas_statistical_2017,
  title = {Statistical {{Query Lower Bounds}} for {{Robust Estimation}} of {{High-Dimensional Gaussians}} and {{Gaussian Mixtures}}},
  booktitle = {2017 {{IEEE}} 58th {{Annual Symposium}} on {{Foundations}} of {{Computer Science}} ({{FOCS}})},
  author = {Diakonikolas, Ilias and Kane, Daniel M. and Stewart, Alistair},
  date = {2017-10},
  pages = {73--84},
  publisher = {IEEE},
  location = {Berkeley, CA},
  doi = {10.1109/FOCS.2017.16},
  url = {http://ieeexplore.ieee.org/document/8104048/},
  urldate = {2025-07-28},
  eventtitle = {2017 {{IEEE}} 58th {{Annual Symposium}} on {{Foundations}} of {{Computer Science}} ({{FOCS}})},
  isbn = {978-1-5386-3464-6}
}

@online{lai_agnostic_2016,
  title = {Agnostic {{Estimation}} of {{Mean}} and {{Covariance}}},
  author = {Lai, Kevin A. and Rao, Anup B. and Vempala, Santosh},
  date = {2016-08-14},
  eprint = {1604.06968},
  eprinttype = {arXiv},
  eprintclass = {cs},
  doi = {10.48550/arXiv.1604.06968},
  url = {http://arxiv.org/abs/1604.06968},
  urldate = {2025-07-24}
}

@online{novikov_nearly_2025,
  title = {Nearly {{Optimal Robust Covariance}} and {{Scatter Matrix Estimation Beyond Gaussians}}},
  author = {Novikov, Gleb},
  date = {2025-04-12},
  eprint = {2502.06564},
  eprinttype = {arXiv},
  eprintclass = {cs},
  doi = {10.48550/arXiv.2502.06564},
  url = {http://arxiv.org/abs/2502.06564},
  urldate = {2025-07-28}
}

@article{vershynin_introduction_2011,
  title = {Introduction to the Non-Asymptotic Analysis of Random Matrices},
  author = {Vershynin, Roman},
  date = {2011-11-23},
  eprint = {1011.3027},
  eprinttype = {arXiv},
  eprintclass = {math},
  doi = {10.48550/arXiv.1011.3027},
  url = {http://arxiv.org/abs/1011.3027},
  urldate = {2025-08-22},
  langid = {english},
  pubstate = {prepublished},
}

@inproceedings{cheng_faster_2019,
  title = {Faster {{Algorithms}} for {{High-Dimensional Robust Covariance Estimation}}},
  booktitle = {Proceedings of the {{Thirty-Second Conference}} on {{Learning Theory}}},
  author = {Cheng, Yu and Diakonikolas, Ilias and Ge, Rong and Woodruff, David P.},
  date = {2019-06-25},
  pages = {727--757},
  publisher = {PMLR},
  issn = {2640-3498},
  url = {https://proceedings.mlr.press/v99/cheng19a.html},
  urldate = {2026-02-13},
  eventtitle = {Conference on {{Learning Theory}}}
}

@article{minsker_robust_2024,
  title = {Robust {{Estimation}} of {{Covariance Matrices}}: {{Adversarial Contamination}} and {{Beyond}}},
  shorttitle = {Robust {{Estimation}} of {{Covariance Matrices}}},
  author = {Minsker, Stanislav and Wang, Lang},
  date = {2024},
  journaltitle = {Statistica Sinica},
  volume = {34},
  number = {3},
  eprint = {27318966},
  eprinttype = {jstor},
  pages = {1565--1583},
  publisher = {Institute of Statistical Science, Academia Sinica},
  issn = {1017-0405},
  url = {https://www.jstor.org/stable/27318966},
  urldate = {2026-02-13}
}

@article{cai_optimal_2021,
  title = {Optimal {{Structured Principal Subspace Estimation}}: {{Metric Entropy}} and {{Minimax Rates}}},
  author = {Cai, Tony and Li, Hongzhe and Ma, Rong},
  date = {2021},
  journaltitle = {Journal of Machine Learning Research},
  volume = {22},
  pages = {1--45},
  langid = {english}
}

@inproceedings{vu_minimax_2012,
  title = {Minimax {{Rates}} of {{Estimation}} for {{Sparse PCA}} in {{High Dimensions}}},
  booktitle = {Proceedings of the {{Fifteenth International Conference}} on {{Artificial Intelligence}} and {{Statistics}}},
  author = {Vu, Vincent and Lei, Jing},
  date = {2012-03-21},
  pages = {1278--1286},
  publisher = {PMLR},
  issn = {1938-7228},
  url = {https://proceedings.mlr.press/v22/vu12.html},
  urldate = {2026-03-12},
  eventtitle = {Artificial {{Intelligence}} and {{Statistics}}},
  langid = {english}
}

@article{huber_robust_1964,
  title = {Robust {{Estimation}} of a {{Location Parameter}}},
  author = {Huber, Peter J.},
  date = {1964-03},
  journaltitle = {The Annals of Mathematical Statistics},
  shortjournal = {Ann. Math. Statist.},
  volume = {35},
  number = {1},
  pages = {73--101},
  issn = {0003-4851},
  doi = {10.1214/aoms/1177703732},
  langid = {english}
}

@book{diakonikolas_algorithmic_2023a,
  title = {Algorithmic {{High-Dimensional Robust Statistics}}},
  author = {Diakonikolas, Ilias and Kane, Daniel M.},
  date = {2023-08-31},
  edition = {1},
  publisher = {Cambridge University Press},
  doi = {10.1017/9781108943161},
  isbn = {978-1-108-94316-1 978-1-108-83781-1}
}

@misc{cai_minimax_2026,
  title = {Minimax and {{Adaptive Covariance Matrix Estimation}} under {{Differential Privacy}}},
  author = {Cai, T. Tony and Li, Yicheng},
  year = 2026,
  month = mar,
  number = {arXiv:2603.19703},
  eprint = {2603.19703},
  primaryclass = {math},
  publisher = {arXiv},
  doi = {10.48550/arXiv.2603.19703},
  urldate = {2026-04-13},
  archiveprefix = {arXiv}
}

@misc{cai_optimal_2024,
  title = {Optimal {{Differentially Private PCA}} and {{Estimation}} for {{Spiked Covariance Matrices}}},
  author = {Cai, T. Tony and Xia, Dong and Zha, Mengyue},
  year = 2024,
  month = sep,
  number = {arXiv:2401.03820},
  eprint = {2401.03820},
  primaryclass = {math},
  publisher = {arXiv},
  doi = {10.48550/arXiv.2401.03820},
  urldate = {2026-04-13},
  archiveprefix = {arXiv}
}

@article{peng_robust_2026,
  title = {Robust Mean Estimation under Star-Shaped Constraints with Heavy-Tailed Noise},
  author = {Peng, Tuorui and Prasadan, Akshay and Neykov, Matey},
  year = 2026,
  month = apr,
  number = {arXiv:2604.05063},
  eprint = {2604.05063},
  primaryclass = {math},
  publisher = {arXiv},
  doi = {10.48550/arXiv.2604.05063},
  urldate = {2026-04-15},
  archiveprefix = {arXiv}
}

@incollection{guedon_euclidean_2000,
  title = {Euclidean Projections of a P-Convex Body},
  booktitle = {Geometric {{Aspects}} of {{Functional Analysis}}},
  author = {Gu{\'e}don, O. and Litvak, A. E.},
  editor = {Milman, Vitali D. and Schechtman, Gideon},
  year = 2000,
  volume = {1745},
  pages = {95--108},
  publisher = {Springer Berlin Heidelberg},
  address = {Berlin, Heidelberg},
  doi = {10.1007/BFb0107210},
  urldate = {2026-09-11},
  isbn = {978-3-540-41070-6 978-3-540-45392-5},
  langid = {english}
}

\newpage
\appendix

\section{Proofs for \Cref{sec:lower_bound} (lower bound)} \label[appendix]{appendix:lower_bound_pfs}

Before progressing to the proofs of \Cref{lemma:lower_bound_1} and \Cref{lemma:lower_bound_2}, we must obtain some intermediate results which are required for those proofs. 

\begin{lemma} \label[lemma]{lemma:min_max_eigen}
    For positive definite matrix A with eigenvalues $0 < \lambda_1 \leq \dots \leq \lambda_n$, 
    \begin{align}
        \lambda_{n}(A^{-1}) &= \lambda_{1}(A)^{-1}
    \end{align}
\end{lemma}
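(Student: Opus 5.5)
The plan is to prove the claim with the spectral theorem, so that it reduces to a statement about scalars. Since $A$ is positive definite, it is in particular symmetric, and so it admits an orthogonal eigendecomposition $A = U\Lambda U\T$. Here $U$ is orthogonal and $\Lambda = \operatorname{diag}(\lambda_1,\dots,\lambda_n)$ with $0<\lambda_1\le\dots\le\lambda_n$. Because every $\lambda_i>0$, the matrix $A$ is invertible. Using $U\T U = UU\T = \mathbb{I}$, we can verify directly that
\[
A^{-1} = U\Lambda^{-1}U\T, \qquad \Lambda^{-1} = \operatorname{diag}(\lambda_1^{-1},\dots,\lambda_n^{-1}),
\]
since $U\Lambda U\T U\Lambda^{-1}U\T = \mathbb{I}$.

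It follows that the eigenvalues of $A^{-1}$ are exactly $\lambda_1^{-1},\dots,\lambda_n^{-1}$, with the same eigenvectors. The map $t\mapsto t^{-1}$ is strictly decreasing on $(0,\infty)$, so the ordering reverses:
\[
\lambda_n^{-1}\le \dots\le \lambda_1^{-1}.
\]
The largest eigenvalue of $A^{-1}$ is therefore $\lambda_1^{-1}$, which gives $\lambda_{n}(A^{-1}) = \lambda_1(A)^{-1}$. The same argument also gives $\lambda_1(A^{-1}) = \lambda_n(A)^{-1}$, and the later lower-bound proofs will likely use both versions, for instance to pass from $c_0 \le \lambda_{\min}(\Sigma)$ to $\|\Sigma^{-1}\|_{2}\le c_0^{-1}$.

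There is no real obstacle here. The only point that needs care is confirming that $A^{-1}$ exists and has the same eigenvectors with reciprocal eigenvalues. An alternative is to argue directly: if $Av=\lambda v$ with $\lambda\neq0$, then $A^{-1}v = \lambda^{-1}v$, and since $A$ and $A^{-1}$ are both $n\times n$, this correspondence is a bijection of spectra counted with multiplicity.
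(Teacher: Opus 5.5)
Your proof is correct and is essentially the paper's argument. The paper proves the lemma by your ``alternative'' route, showing that $Av=\lambda v$ implies $A^{-1}v=\lambda^{-1}v$ and then using positivity of the eigenvalues to reverse the ordering; your primary spectral-theorem route is equally valid for symmetric $A$ and also exhibits the full spectrum of $A^{-1}$ with multiplicities, while the direct eigenpair argument needs no symmetry.
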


\begin{delayedproof}{lemma:min_max_eigen}\label{proof:min_max_eigen} Begin with the eigen decomposition of A: 
    \begin{align*}
        A v &= \lambda v
        \\ A^{-1} Av &= \lambda A^{-1}v
        \\ v &= \lambda A^{-1} v 
        \\ \lambda^{-1} v &= A^{-1}v.
    \end{align*}
    Therefore each eigenvalue of $A^{-1}$ is the inverse of an eigenvalue of $A$. Since all eigenvalues are positive this further implies the desired result.
\end{delayedproof}

\begin{lemma} \label[lemma]{lemma:kl_bound}
    Let $\Sigma_i$ and $\Sigma_j$ be covariance matrices in $K$.  Then there exists a sufficiently large constant $C_0 > 1$ depending only on $c_0$ and $c_1$ such that we can bound the KL divergence of normal random variables with mean 0 and these covariances as follows: 
    \begin{align}
        KL(N(0,\Sigma_i) \| N(0, \Sigma_j)) &\leq C_0\| \Sigma_j^{-1} \Sigma_i - \mathbb{I} \|_F^2  \leq \frac{C_0}{c_0^2}\|\Sigma_i - \Sigma_j\|_F^2\label{eq:kl_f_ineq}
    \end{align}
\end{lemma}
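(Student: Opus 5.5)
We use the closed form of the Gaussian KL divergence. With $A \coloneqq \Sigma_j^{-1}\Sigma_i$ we have
\[
KL(N(0,\Sigma_i)\|N(0,\Sigma_j)) = \tfrac12\left(\tr(A) - n - \log\det A\right).
\]
The matrix $A$ is similar to the symmetric positive definite matrix $B \coloneqq \Sigma_j^{-1/2}\Sigma_i\Sigma_j^{-1/2}$, so $A$ has real positive eigenvalues $\mu_1,\dots,\mu_n$, namely those of $B$. The KL divergence then becomes $\tfrac12\sum_k f(\mu_k)$ with $f(x) = x - 1 - \log x$.

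\textbf{Step 1: bound each term of the sum.} By \Cref{lemma:min_max_eigen} and Condition \ref{cond:K_set}, every eigenvalue satisfies $\mu_k \in [c_0/c_1, c_1/c_0]$, an interval bounded away from $0$ and $\infty$. On this interval $f(x) \leq C'(x-1)^2$, where $C'$ depends only on $c_0,c_1$. To see this, Taylor expand around $1$: $f(1)=f'(1)=0$ and $f''(x) = 1/x^2 \leq (c_1/c_0)^2$, giving $C' = \tfrac12 (c_1/c_0)^2$. Hence
\[
KL \leq C'\sum_k(\mu_k-1)^2 = C'\|B-\mathbb{I}\|_F^2 .
\]

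\textbf{Step 2: compare with $\|A-\mathbb{I}\|_F^2$.} This is the step needing the most care, because $A$ is not symmetric and the sum of its squared eigenvalue deviations is not literally its Frobenius norm. Write
\[
B - \mathbb{I} = \Sigma_j^{1/2}(A-\mathbb{I})\Sigma_j^{-1/2}.
\]
Then
\[
\|B-\mathbb{I}\|_F \leq \|\Sigma_j^{1/2}\|_{op}\,\|\Sigma_j^{-1/2}\|_{op}\,\|A-\mathbb{I}\|_F \leq \sqrt{c_1/c_0}\,\|A-\mathbb{I}\|_F .
\]
Setting $C_0 \coloneqq \max\{2, C' c_1/c_0\}$ gives the first inequality of \eqref{eq:kl_f_ineq}, with $C_0>1$ and $C_0$ depending only on $c_0,c_1$.

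\textbf{Step 3: the second inequality.} Factor
\[
A - \mathbb{I} = \Sigma_j^{-1}(\Sigma_i - \Sigma_j).
\]
Using $\|PQ\|_F \leq \|P\|_{op}\|Q\|_F$ together with \Cref{lemma:min_max_eigen}, which gives $\|\Sigma_j^{-1}\|_{op} = \lambda_{\min}(\Sigma_j)^{-1} \leq 1/c_0$, we obtain
\[
\|A-\mathbb{I}\|_F^2 \leq \frac{1}{c_0^2}\|\Sigma_i-\Sigma_j\|_F^2 .
\]
Multiplying by $C_0$ completes \eqref{eq:kl_f_ineq}.

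The only other delicate point is confirming that the eigenvalue range in Step 1 is uniform over all of $K$. This is where the two-sided eigenvalue bounds of Condition \ref{cond:K_set} are used, and it is what lets $C_0$ depend only on $(c_0,c_1)$.
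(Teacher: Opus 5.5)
Your proof is correct and follows the paper's argument. Both use the Gaussian KL formula, reduce to the eigenvalues of the similar symmetric matrix $\Sigma_j^{-1/2}\Sigma_i\Sigma_j^{-1/2}$ lying in $[c_0/c_1, c_1/c_0]$, bound $x-1-\log x$ quadratically there (your Taylor argument gives the explicit constant $\tfrac12 (c_1/c_0)^2$ where the paper uses compactness), and finish with $\|\Sigma_j^{-1}(\Sigma_i-\Sigma_j)\|_F \le c_0^{-1}\|\Sigma_i-\Sigma_j\|_F$.

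The only real difference is your Step 2. The paper passes from $\sum_k(\lambda_k-1)^2$ to $\|\Sigma_j^{-1}\Sigma_i-\mathbb{I}\|_F^2$ without comment; this is valid by Schur's inequality, $\sum_k|\lambda_k(M)|^2\le\|M\|_F^2$ for any square $M$. Your conjugation by $\Sigma_j^{1/2}$ justifies the comparison explicitly, at the harmless cost of an extra factor $c_1/c_0$ absorbed into $C_0$.
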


\begin{delayedproof}{lemma:kl_bound}
    Define $A \coloneqq \Sigma_j^{-1} \Sigma_i$ with eigenvalues $\lambda_{\min} \leq \dots \leq \lambda_{\max}$. Note that $\Sigma_j^{1/2} (\Sigma_j^{-1}\Sigma_i) \Sigma_j^{-1/2} = \Sigma_{j}^{-1/2}\Sigma_i\Sigma_j^{-1/2}$ so $\Sigma_j^{-1}\Sigma_i$ and $\Sigma_j^{-1/2}\Sigma_i\Sigma_{j}^{-1/2}$ are similar. Thus the eigenvalues $\lambda_i$ of $\Sigma_j^{-1}\Sigma_i$ are equal to those of the symmetric $\Sigma_j^{-1/2}\Sigma_i\Sigma_j^{-1/2}$.
    We use \Cref{lemma:min_max_eigen} and Condition \ref{cond:K_set} to bound $\lambda_{\min}(A)$  as follows
    \begin{align*}
        \lambda_{\max}((\Sigma_j^{-1/2}\Sigma_i\Sigma_j^{-1/2})^{-1})  &= \lambda_{\max}(\Sigma_i^{-1/2}\Sigma_j\Sigma_i^{-1/2}) 
        \\ &\leq \lambda_{\max}(\Sigma_{i}^{-1/2})\lambda_{\max}(\Sigma_j)\lambda_{\max}(\Sigma_{i}^{-1/2})
        \\ &\leq \frac{c_1}{c_0}
    \end{align*} 
    We apply \Cref{lemma:min_max_eigen} to this inequality and get 
    \begin{align}
        \frac{1}{\lambda_{\min}(A)} \leq \frac{c_1}{c_0} \implies \lambda_{\min}(A) \geq \frac{c_0}{c_1}, \label{eq:eigen_bound_sigsig}
    \end{align}
    On the other hand, 
    \begin{align*}
        \lambda_{\max}(\Sigma_j^{-1/2}\Sigma_i\Sigma_j^{-1/2}) &\leq \|\Sigma_j^{-1/2}\|_{\operatorname{op}}^2\|\Sigma_i\|_{\operatorname{op}} \leq \frac{c_1}{c_0}.
    \end{align*}
    Now we show \Cref{eq:kl_f_ineq}:
    \begin{align}
        KL(N(0,\Sigma_i) \| N(0, \Sigma_j)) &= \frac{1}{2} \left(tr(\Sigma_j^{-1}\Sigma_i) - n + \log \frac{|\Sigma_j|}{|\Sigma_i|} \right) \notag
       \\ &= \frac{1}{2} \sum_{i=1}^{n} (\lambda_i - 1 + \log \lambda_i^{-1}) \label{eq:kl_equality}
    \end{align}
    where $\lambda_i$ denotes the $i$th eigenvalue of $\Sigma_j^{-1/2}\Sigma_i\Sigma_j^{-1/2}$.  Next, let $\mu_i \coloneqq \lambda_i - 1$ and see that
    \begin{align*}
        \lambda_i - 1 + \log(\lambda_i)^{-1} &= \mu_i - \log(\mu_i + 1) 
    \end{align*}
    We claim 
    \begin{align}
        \mu_i - \log(\mu_i + 1) \leq C_0\mu_i^2 \hspace{3mm} \text{on } \mu_i \in \bigg[\frac{c_0}{c_1}-1,\frac{c_1}{c_0}-1\bigg]\label{eq:log_sq_ineq}
    \end{align} for a sufficiently large $C_0$.
    To see this, first let $h(x)= \begin{cases} \frac{x - \log(x + 1)}{x^2} & x \neq 0 \\ 1/2 & x = 0\end{cases}$. This function is continuous on all $\Big[\frac{c_0}{c_1}-1,\frac{c_1}{c_0}-1\Big]$. This interval is closed and bounded it is compact, so $h(x)$ is guaranteed to attain a maximum somewhere on $\Big[\frac{c_0}{c_1}-1,\frac{c_1}{c_0}-1\Big]$, call it $C_0^\star \coloneqq \max h(x)$. Then let $C_0 = C_0^\star \vee 1$. Thus, $h(u_i) = \frac{u_i - \log(u_i+1)}{u_i^2} \leq C_0$ and the property holds.
    Continuing on from \Cref{eq:kl_equality}, we have
    \begin{align*}
        KL(N(0,\Sigma_i) \| N(0, \Sigma_j)) &\leq C_0\sum_{i=1}^n \mu_i^2
        \\ &= C_0\sum_{i=1}^n (\lambda_i - 1)^2
        \\ &\leq C_0\|\Sigma_j^{-1}\Sigma_i - \mathbb{I}\|_{F}^2
        \\ &\leq C_0\|\Sigma_j^{-1}\|_2^2 \|\Sigma_i - \Sigma_j\|_F^2
        \\ &\leq \frac{C_0}{c_0^2}\|\Sigma_i - \Sigma_j\|_F^2,
    \end{align*}
    where the last inequality holds due to the bound on $\lambda_{\min}$.
\end{delayedproof}

\begin{corollary} \label[corollary]{cor:info_bound} For any $\Sigma_0 \in K$ we have
    \begin{align} 
        I(X;J) \leq \frac{C_0}{c_0^2} N \max_j \|\Sigma_0 - \Sigma_j\|_F^2 
    \end{align}
\end{corollary}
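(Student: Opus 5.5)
The plan is to use the standard convexity bound on mutual information by a reference distribution, and then apply \Cref{lemma:kl_bound}. Write $P_j$ for the law of the full sample $(X_1,\dots,X_N)$ given $J=j$, so $P_j = N(0,\Sigma_j)^{\otimes N}$. Let $P_0 = N(0,\Sigma_0)^{\otimes N}$ for an arbitrary fixed $\Sigma_0 \in K$. The mixture $\bar P = \frac{1}{M}\sum_j P_j$ is the marginal of $X$, and
\begin{align*}
    I(X;J) = \frac{1}{M}\sum_{j=1}^M KL(P_j \,\|\, \bar P).
\end{align*}

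\textbf{Step 1 (replace the mixture by $P_0$).} The first step is the identity
\begin{align*}
    \frac{1}{M}\sum_j KL(P_j\|P_0) = \frac{1}{M}\sum_j KL(P_j\|\bar P) + KL(\bar P\|P_0).
\end{align*}
It follows by writing $\log\frac{dP_j}{dP_0} = \log\frac{dP_j}{d\bar P} + \log\frac{d\bar P}{dP_0}$ and averaging over $j$. Since the last term is nonnegative, this gives
\begin{align*}
    I(X;J) \le \frac{1}{M}\sum_j KL(P_j\|P_0) \le \max_j KL(P_j\|P_0).
\end{align*}
Absolute continuity holds throughout because all the Gaussians are nondegenerate (their covariances are positive definite by Condition \ref{cond:K_set}). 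This is the only step needing any care, and it is standard (cf. Wainwright, Ch. 15).

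\textbf{Step 2 (tensorize and apply the KL bound).} Because the $N$ observations are independent under each hypothesis (Condition \ref{cond:x}), KL divergence is additive over product measures:
\begin{align*}
    KL(P_j\|P_0) = N\, KL\bigl(N(0,\Sigma_j)\,\|\,N(0,\Sigma_0)\bigr).
\end{align*}
Both $\Sigma_j$ and $\Sigma_0$ lie in $K$, so \Cref{lemma:kl_bound} gives
\begin{align*}
    KL\bigl(N(0,\Sigma_j)\,\|\,N(0,\Sigma_0)\bigr) \le \frac{C_0}{c_0^2}\|\Sigma_j-\Sigma_0\|_F^2.
\end{align*}
Combining with Step 1 yields
\begin{align*}
    I(X;J) \le \frac{C_0}{c_0^2}N\max_j\|\Sigma_0-\Sigma_j\|_F^2.
\end{align*}

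No step is a genuine obstacle. The main thing to check is the direction of the KL divergence in \Cref{lemma:kl_bound}. That bound uses $\|\Sigma_0^{-1}\|_2 \le 1/c_0$, which is valid here since $\Sigma_0 \in K$.
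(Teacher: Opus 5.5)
Your proof is correct and follows the paper's argument. You bound $I(X;J)$ by the average KL divergence from each $N(0,\Sigma_j)^{\otimes N}$ to the reference $N(0,\Sigma_0)^{\otimes N}$, tensorize over the $N$ independent observations, and apply \Cref{lemma:kl_bound} in the correct direction (with $\Sigma_0$ as the second argument) before bounding the average by the maximum. The only difference is that you justify the first inequality via the compensation identity $\frac{1}{M}\sum_j KL(P_j\|P_0) = I(X;J) + KL(\bar P\|P_0)$, whereas the paper simply asserts it as the standard bound.
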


\begin{delayedproof}{cor:info_bound}
    Let $\Prob_{X_i}^{\Sigma_0}$ be the distribution of $X_i \sim N(0, \Sigma_0)$ and  $\Prob_{X_i}^{\Sigma_j}$ be the distribution of $(X_i | J=j) \sim N(0, \Sigma_j)$
    \begin{align*}
    I(X;J) &\leq \frac{1}{m} \sum_{j=1}^m KL\left(\Prob_{X_i}^{\Sigma_j\otimes N} \| \Prob_{X_i}^{\Sigma_0\otimes N}\right) 
    \\ &= \frac{1}{m} \sum_{j=1}^m \sum_{i=1}^N KL\left(\Prob_{X_i}^{\Sigma_j} \| \Prob_{X_i}^{\Sigma_0}\right) 
    \\ &= \frac{1}{m} \sum_{j=1}^m \sum_{i=1}^N KL(N(0,\Sigma_j) \| N(0, \Sigma_0))
    \\ &\leq \frac{NC_0}{mc_0^2} \sum_{j=1}^m \|\Sigma_0 - \Sigma_j\|_F^2
    \\ &\leq \frac{NC_0}{c_0^2} \max_j \|\Sigma_0 - \Sigma_j\|_F^2
    \end{align*}
\end{delayedproof}

\begin{delayedproof}{lemma:lower_bound_1}
    The first inequality is trivial by assuming no corruption.
    Pick $\Sigma^\star \in K$ such that \begin{align}
        M \coloneqq \mkloc(\eta,c) \coloneqq \sup_{\Sigma^\star \in K} \mathcal{M}(\eta/c, B(\Sigma^\star, \eta) \cap K)
    \end{align}
    Let $\Sigma_1, \dots, \Sigma_M$ be a maximal $\eta/c$ packing set for $B(\Sigma^\star,\eta) \cap K$. This of course implies $\|\Sigma_i - \Sigma_j\|_F > \eta/c$ whenever $i \neq j$. Also, $\|\Sigma^\star - \Sigma_i\|_F \leq \eta$ for every $j \in [M]$ because $\Sigma_j \in B(\Sigma^\star,\eta) \cap K$. Let $J$ be uniformly distributed on $[M]$ and apply \Cref{cor:info_bound} to get 
    \begin{align}
        I(X; J) &\leq N \frac{C_0}{c_0^2} \max_j \|\Sigma^\star - \Sigma_j\|_F^2 \leq \frac{C_0}{c_0^2} N\eta^2
    \end{align}
    Apply \Cref{lemma:fano} to get 
    \begin{align}
        \inf_{\hat\Sigma} \sup_{\Sigma} \E \| \hat\Sigma(X) - \Sigma \|_F^2 &\geq \frac{\eta^2}{4c^2} \left(1 - \frac{I(X;J) + \log 2}{\log M} \right) 
        \\ &\geq \frac{\eta^2}{4c^2} \left(1 - \frac{\frac{C_0}{c_0^2} N\eta^2 + \log 2}{\log M} \right) 
    \end{align}
    Choosing $\eta$ such that $\log M > 4 \left(\frac{C_0}{c_0^2} N\eta^2 \vee \log 2 \right) \geq 2\left(\frac{C_0}{c_0^2} N\eta^2 + \log 2\right)$ gives the desired bound. 
\end{delayedproof}

\begin{delayedproof}{lemma:lower_bound_2} 
First choose $\varepsilon^\prime = \varepsilon - \frac{1}{\sqrt{2N}}$. Note that $\varepsilon \asymp \varepsilon^\prime$. Choose $\Sigma_1$, $\Sigma_2 \in K$ such that $(d/3)^2 \wedge \frac{2c_0^2\varepsilon^{\prime 2}}{C_0(1-\varepsilon^\prime)^2} \leq \|\Sigma_1 - \Sigma_2\|_F^2 \leq \frac{2c_0^2 \varepsilon^{\prime 2}}{C_0(1-\varepsilon^\prime)^2}$, which is possible by \Cref{lemma:d_line_segment}. Then \begin{align*}
TV(N(0,\Sigma_1),N(0,\Sigma_2))^2 &\overset{(i)}{\leq} \frac{1}{2} KL(N(0,\Sigma_1),N(0,\Sigma_2)) 
\\ &\overset{(ii)}{\leq} \frac{C_0}{2c_0^2}\|\Sigma_1 - \Sigma_2\|_F^2 
\\&\overset{(iii)}{\leq} \frac{\varepsilon^{\prime 2}}{(1-\varepsilon^\prime)^2} \end{align*} where $(i)$ is the Pinsker-Csiszár-Kullback inequality, $(ii)$ is due to \Cref{lemma:kl_bound}, and $(iii)$ is due to the definition of $\Sigma_1$ and $\Sigma_2$. Also, by \textcite[Proof of theorem 5.1]{chen_robust_2018} we can pick $\varepsilon^{\prime\prime} \leq \varepsilon^\prime$ such that $TV(N(0,\Sigma_1),N(0,\Sigma_2)) = \frac{\varepsilon^{\prime\prime}}{(1-\varepsilon^{\prime\prime})}$. Denote the densities of $N(0,\Sigma_1)$ and $N(0,\Sigma_2)$ as $p_1,p_2$  (and their measures $P_1$,$P_2$). Then define two measures, $Q_1$ and $Q_2$ via their densities as follows: 
\begin{align*}
    q_1 &= \frac{(p_2 - p_1) \indicator{p_2 > p_1}}{TV(P_1,P_2)} \hspace{10mm} q_2 = \frac{(p_1 - p_2) \indicator{p_1 > p_2}}{TV(P_1,P_2)}
\end{align*}
First we must confirm these are valid probability measures:
\begin{align*}
     \int (p_2 - p_1) \indicator{p_2 > p_1} = 1 - \int p_1 \wedge p_2 = \int (p_1 - p_2)\indicator{p_1 > p_2}
\end{align*}
and 
\begin{align*}
     \int (p_2 - p_1) \indicator{p_2 > p_1} + \int (p_1 - p_2)\indicator{p_1 > p_2} = 2TV(P_1,P_2)
\end{align*}
then 
\begin{align}
    \int (p_2 - p_1) \indicator{p_2 > p_1} = \int (p_1 - p_2)\indicator{p_1 > p_2} = TV(P_1,P_2) \hspace{1mm}.
\end{align}
Therefore $\int q_1 = \int q_2 = 1$ so these are valid densities.
\\ Next, we create two mixtures: 
\begin{align*}
    R_1 &= (1 - \varepsilon^{\prime\prime})P_1 + \varepsilon^{\prime\prime} Q_1 
    \\ R_2 &= (1 - \varepsilon^{\prime\prime})P_2 + \varepsilon^{\prime\prime} Q_2.
\end{align*}
First, notice that these mixtures can be defined as follows: with probability $1-\varepsilon^{\prime\prime}$ the original draw from $N(0,\Sigma_i)$ is kept and with probability $\varepsilon^{\prime\prime}$ it is corrupted to a draw from $Q_i$. 
\\ Notice that since $TV(P_1,P_2) = \frac{\varepsilon^{\prime\prime}}{1 - \varepsilon^{\prime\prime}}$, the densities of $R_1$ and $R_2$ (with respect to the Lebesgue measure) are equal:
\begin{align*}
    r_1 &= (1 - \varepsilon^{\prime\prime})p_1 + (1-\varepsilon^{\prime\prime}) (p_2 - p_1)\indicator{p_2 > p_1} 
    \\ &= (1-\varepsilon^{\prime\prime})p_1\indicator{p_1 \geq p_2} + (1-\varepsilon^{\prime\prime})p_2\indicator{p_2 > p_1}
    \\ &= (1-\varepsilon^{\prime\prime})(p_1-p_2)\indicator{p_1 \geq p_2} + (1-\varepsilon^{\prime\prime})p_2
    \\ &= r_2
\end{align*}

Thus $TV(R_1^{\otimes N},R_2^{\otimes N})=0$.
As in \cite[proof of Lemma 2.2]{prasadan_information_2025}, define the set of choices of $s$ of the $N$ indices as $\binom{[N]}{s} \coloneqq \big\{(i_1,...,i_s): i_1,\dots,i_s \in[N], i_1 < \dots < i_s \big\}$ and let
\begin{align*}
    S_J^L(B) &= \sum_{s=0}^B \sum_{I \in \binom{[N]}{s}}(1-\varepsilon^{\prime\prime})^{N-s}(\varepsilon^{\prime\prime})^s \prod_{i \in [N] \setminus I} p_J(x_i) \prod_{i \in I}q_J(x_i)
    \\S_J^U(B) &= \sum_{s=B+1}^N \sum_{I \in \binom{[N]}{s}}(1-\varepsilon^{\prime\prime})^{N-s}(\varepsilon^{\prime\prime})^s \prod_{i \in [N] \setminus I} p_J(x_i) \prod_{i \in I}q_J(x_i)
\end{align*} 
for integer $B$.
Note that
\begin{align*}
    &\frac{1}{2}\int\bigg|S_1^U(B) - S_2^U(B)\bigg|\dx 
    \\ &= \frac{1}{2}\int \bigg|\sum_{s=B+1}^N \sum_{I \in \binom{[N]}{s}}(1-\varepsilon^{\prime\prime})^{N-s}(\varepsilon^{\prime\prime})^s \bigg(\prod_{i \in [N] \setminus I} p_1(x_i) \prod_{i \in I}q_1(x_i) - \prod_{i \in [N] \setminus I} p_2(x_i) \prod_{i \in I}q_2(x_i)\bigg) \bigg|\dx
    \\ &\overset{(i)}{\leq}  \frac{1}{2}\sum_{s=B+1}^N \sum_{I \in \binom{[N]}{s}}(1-\varepsilon^{\prime\prime})^{N-s}(\varepsilon^{\prime\prime})^s \int\bigg|\bigg(\prod_{i \in [N] \setminus I} p_1(x_i) \prod_{i \in I}q_1(x_i) - \prod_{i \in [N] \setminus I} p_2(x_i) \prod_{i \in I}q_2(x_i)\bigg) \bigg|\dx
    \\ &\leq \sum_{s=B+1}^N \binom{N}{s}(1-\varepsilon^{\prime\prime})^{N-s}(\varepsilon^{\prime\prime})^s
    \\ &= \Prob(\operatorname{Bin}(N,\varepsilon^{\prime\prime}) > B)
\end{align*}
where $(i)$ is due to the triangle inequality and we can exchange the sum and integral since the interior is strictly positive.
Now,
\begin{align*}
    TV(R_1^{\otimes N},&R_2^{\otimes N}) 
    \\ &= \frac{1}{2}\int \bigg|\prod_{i \in [N]}\big((1-\varepsilon^{\prime\prime})p_{1}(x_i) + \varepsilon^{\prime\prime} q_1(x_i)\big) - \prod_{i \in [N]}\big((1-\varepsilon^{\prime\prime})p_{2}(x_i) + \varepsilon^{\prime\prime} q_2(x_i)\big)\bigg|\dx
    \\ &\geq \frac{1}{2}\int |S_1^L(B) - S_2^L(B)|\dx - \frac{1}{2}\int |S_1^U(B) - S_2^U(B)| \dx 
    \\ &\geq \frac{1}{2}\int |S_1^L(B) - S_2^L(B)|\dx -\Prob(\operatorname{Bin}(N,\varepsilon^{\prime\prime}) > B)
\end{align*}
Hoeffding's inequality gives $\Prob(\operatorname{Bin}(N,\varepsilon^{\prime\prime}) - N\varepsilon^{\prime\prime} \geq t) \leq \exp(-2t^2/N)$ and $\Prob(\operatorname{Bin}(N,\varepsilon^{\prime\prime}) - N\varepsilon^{\prime\prime} \leq t) \geq 1- \exp(-2t^2/N)$.
Let $t = \sqrt{\frac{N}{2}}$ then $\Prob\bigg(\operatorname{Bin}(N,\varepsilon^{\prime\prime})> N\big(\varepsilon^{\prime\prime} + \frac{1}{\sqrt{2N}}\big)\bigg) \leq \exp(-1)$ and $\Prob\bigg(\operatorname{Bin}(N,\varepsilon^{\prime\prime})\leq N\big(\varepsilon^{\prime\prime} + \frac{1}{\sqrt{2N}}\big)\bigg) \geq 1 - \exp(-1)$. Thus
\begin{align*}
    \frac{\frac{1}{2}\int |S_1^L\big(\lfloor N(\varepsilon^{\prime\prime}+1/\sqrt{2N})\rfloor\big) - S_2^L\big(\lfloor N(\varepsilon^{\prime\prime}+1/\sqrt{2N})\big)\rfloor|\dx}{\Prob\bigg(\operatorname{Bin}(N,\varepsilon^{\prime\prime})\leq N\big(\varepsilon^{\prime\prime} + \frac{1}{\sqrt{2N}}\big)\bigg)} \leq \frac{\exp(-1)}{1-\exp(-1)}< 1.
\end{align*}
Now, we can see that the left-hand side is the TV distance between two conditional mixtures. For $J\in\{1,2\}$ and given $N$ observations of $R_J$, let $W_J$ be the number of times $Q_J$ is drawn in place of $P_J$. Note that this is a $\operatorname{Bin}(N,\varepsilon^{\prime\prime})$ random variable. Then define $R_J^\prime$ as the distribution $R_J^{\otimes N}$ conditional on fewer than $\varepsilon^{\prime\prime} + \frac{1}{\sqrt{2N}}$ proportion of the entries being corrupted to distrubtion $Q_J$. Specifically, $R_J^\prime = R_J^{\otimes N} \big|\big\{W_J \leq N(\varepsilon^{\prime\prime} + \frac{1}{\sqrt{2N}})\big\}$. Also note that $N\big(\varepsilon^{\prime\prime} + 1/\sqrt{2N}\big) \leq N\varepsilon$. Thus, $TV(R^{\prime}_1,R^{\prime}_2) < 1$, so by Le Cam's Two-Point Lemma \cite[Section 15.2]{wainwright_highdimensional_2019} we have: 
\begin{align*}
    \inf_{\hat\Sigma} \sup_{\Sigma} \sup_{\mathcal{C}} \E \| \hat\Sigma(\mathcal{C}(X)) - \Sigma \|_F^2 & \gtrsim \|\Sigma_1 - \Sigma_2\|_F^2\frac{1}{2}(1 - TV(R^{\prime}_1,R^{\prime}_2))
    \\ &\gtrsim \frac{\varepsilon^{\prime 2}}{(1 - \varepsilon^{\prime})^2} \wedge (d/3)^2
    \\ &\gtrsim \varepsilon^{\prime 2} \wedge d^2
    \\ &\asymp \varepsilon^{ 2} \wedge d^2
\end{align*}

\end{delayedproof}

\section{Proofs for \Cref{sec:upper_bound} (upper bound)}\label[appendix]{appendix:upper_bound_pfs}

\subsection{Preliminaries}

\begin{lemma}\label[lemma]{lemma:D4_bounded}
    Let $\Upsilon \coloneqq C_{SE}c_1$ for some $C_{SE} \geq \sqrt{8}$ such that $C_{SE}c_1\geq \sup\limits_{\substack{\|A\|_F=1\\A =A\T}}\|\langle A,\tilde\xi\tilde\xi\T-\Sigma\rangle\|_{\psi_1}$ for all $\Sigma \in K$ where $\lambda_{\max}(\Sigma) \leq c_1$. Then $\operatorname{Var}(\tilde{V}) \leq \Upsilon^2 \lesssim c_1^2$.
\end{lemma}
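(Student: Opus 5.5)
We need to show $\operatorname{Var}(\tilde V)\le \Upsilon^2$ and $\Upsilon^2\lesssim c_1^2$. The plan is to compute the variance of the centered quantity exactly, using the Gaussian fourth-moment (Isserlis) formula, and then compare it with $\Upsilon^2$.

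\textbf{Step 1: reduce to a quadratic form.} From the identity \eqref{eq:v_minus_ev} in the proof of \Cref{lemma:V_i_bound},
\begin{align*}
\tilde V-\E\tilde V = 2\langle A,\tilde\xi\tilde\xi\T-\Sigma\rangle,
\end{align*}
where $A=(\Sigma_2-\Sigma_1)/\|\Sigma_2-\Sigma_1\|_F$ is symmetric with $\|A\|_F=1$. Hence
\begin{align*}
\operatorname{Var}(\tilde V)=4\operatorname{Var}(\tilde\xi\T A\tilde\xi).
\end{align*}

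\textbf{Step 2: compute the variance exactly.} Write $\tilde\xi=\Sigma^{1/2}\eta$ with $\eta\sim N(0,\mathbb{I})$. Set $O=\Sigma^{1/2}A\Sigma^{1/2}$ and diagonalize it as $O=\sum_i\lambda_iu_iu_i\T$. Then $\tilde\xi\T A\tilde\xi=\sum_i\lambda_i (u_i\T\eta)^2$, and the $u_i\T\eta$ are i.i.d.\ standard normal. Since $\operatorname{Var}(Z^2)=2$ for standard normal $Z$,
\begin{align*}
\operatorname{Var}(\tilde\xi\T A\tilde\xi)=2\sum_i\lambda_i^2=2\|O\|_F^2\le 2\|\Sigma\|_{op}^2\|A\|_F^2\le 2c_1^2 .
\end{align*}
Therefore $\operatorname{Var}(\tilde V)\le 8c_1^2$.

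\textbf{Step 3: compare with $\Upsilon^2$.} Because $C_{SE}\ge\sqrt8$, we get $\Upsilon^2=C_{SE}^2c_1^2\ge 8c_1^2\ge\operatorname{Var}(\tilde V)$. The normalization $C_{SE}\ge\sqrt8$ was evidently chosen to make exactly this work.

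\textbf{Step 4: show $\Upsilon^2\lesssim c_1^2$.} This needs the existence of a universal $C_{SE}$ with
\begin{align*}
\sup_{\|A\|_F=1,\,A=A\T}\big\|\langle A,\tilde\xi\tilde\xi\T-\Sigma\rangle\big\|_{\psi_1}\le C_{SE}c_1 .
\end{align*}
With the same diagonalization, $\langle A,\tilde\xi\tilde\xi\T-\Sigma\rangle=\sum_i\lambda_i(g_i^2-1)$ for i.i.d.\ standard normal $g_i$. Each $g_i^2-1$ is centered sub-exponential with a universal $\psi_1$ norm. I would bound the $\psi_1$ norm of the sum by $\lesssim\|\lambda\|_2+\|\lambda\|_\infty\lesssim\|\lambda\|_2$ via Bernstein's inequality, or via the Hanson--Wright inequality applied to $\eta\T O\eta$. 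Since $\|\lambda\|_2=\|O\|_F\le c_1$, this gives the bound with a universal constant, which we may enlarge to be at least $\sqrt8$. Combined with Step 3, $\operatorname{Var}(\tilde V)\le\Upsilon^2\lesssim c_1^2$.

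\textbf{Main obstacle.} The only delicate point is Step 4. Vershynin's Proposition 5.16 gives tail bounds with both $\|\lambda\|_2$ and $\|\lambda\|_\infty$ terms, and these must be converted into a $\psi_1$-norm bound. This works because a tail of the form $2\exp(-c\min(t^2/\|\lambda\|_2^2,\,t/\|\lambda\|_\infty))$ implies $\psi_1$ norm $\lesssim\|\lambda\|_2$, using $\|\lambda\|_\infty\le\|\lambda\|_2$.
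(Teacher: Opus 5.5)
Your proposal is correct and follows the paper's proof essentially step for step. It uses the same whitening $\eta=\Sigma^{-1/2}\tilde\xi$ and diagonalization of $O=\Sigma^{1/2}A\Sigma^{1/2}$, the same exact computation $\operatorname{Var}(\tilde V)=8\|O\|_F^2\le 8c_1^2\le\Upsilon^2$ (via $C_{SE}\ge\sqrt8$), and the same Bernstein-type tail bound (Vershynin, Prop.~5.16) to get a $\psi_1$-norm $\lesssim\|O\|_F\le c_1$. Your explicit conversion of the mixed tail into a $\psi_1$ bound using $\|\lambda\|_\infty\le\|\lambda\|_2$ simply makes precise what the paper compresses into ``taking $t$ large.''
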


\begin{delayedproof}{lemma:D4_bounded}
    let $\eta = \Sigma^{-1/2}\tilde\xi$ so $\eta \sim N(0,\mathbb{I})$ and $O \coloneqq \Sigma^{1/2}A\Sigma^{1/2} = U\Lambda U\T = \sum_i \lambda_i u_i u_i\T$ the eigendecomposition of $O$.
    \begin{align*}
        \langle A, \tilde\xi\tilde\xi\T - \Sigma\rangle &= \langle \Sigma^{1/2}A\Sigma^{1/2}, \eta\eta\T - \mathbb{I}\rangle 
        \\ &= \langle O, \eta\eta\T - \mathbb{I}\rangle
        \\ &= \sum_{i} \lambda_i((u_i\T \eta)^2 - 1)
    \end{align*}
    By rotational invariance, $u_i\eta$ are iid standard normal random variables so \textcite[Proposition 5.16]{vershynin_introduction_2011} gives (for $t \geq 0$ and for $K = \|N(0,1)^2-1\|_{\psi_1}$)
    \begin{align*}
        \Prob(|\langle A, \tilde\xi\tilde\xi\T - \Sigma)| \geq t) &\leq 2\expfunc{-c\min\bigg(\frac{t^2}{K^2\|\lambda\|_2^2} , \frac{t}{K\|\lambda\|_{\infty}}\bigg)}
        \\ &=  2\expfunc{-c\min\bigg(\frac{t^2}{K^2\|O\|_F^2} , \frac{t}{K\|O\|_{2}}\bigg)}
    \end{align*}
    taking $t$ large shows this is at worst case $\|\langle A, \tilde\xi\tilde\xi\T - \Sigma\rangle\|_{\psi_1} \lesssim \|O\|_F$, and  \[\|O\|_F \leq \|\Sigma^{1/2}\|_2\|A\|_F\|\Sigma^{1/2}\|_2 \leq c_1.\]
    Finally, $\operatorname{Var}(\tilde{V}) = 8 \bigg\|\Sigma^{1/2}\frac{\Sigma_1-\Sigma_2}{\|\Sigma_1 - \Sigma_2\|_F}\Sigma^{1/2}\bigg\|_F^2 \leq 8c_1^2$ and the second conclusion follows.
\end{delayedproof}

\begin{lemma}\label[lemma]{lemma:2r_bound}
    For any $\delta > 0$, $c \geq 2$, and $K$ star-shaped, 
    \[
        \log \mkloc(\delta,2c) \leq 2\log\mkloc\bigg(\frac{\delta}{2},c\bigg).
    \]
\end{lemma}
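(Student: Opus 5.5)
We need to show: $\log \mkloc(\delta,2c) \le 2\log \mkloc(\delta/2,c)$. The quantity on the left counts the size of a $\delta/(2c)$-packing of $B(\Sigma,\delta)\cap K$. The plan is the standard two-scale covering argument: bound this packing number by the product of two local packing numbers, then use \Cref{lemma:loc_metric_entropy_noninc} to compare each factor with $\mkloc(\delta/2,c)$.

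\textbf{Step 1 (coarse cover).} Fix $\Sigma\in K$ and let $P$ be a maximal $\delta/(2c)$-packing of $B(\Sigma,\delta)\cap K$. Take a maximal $\delta/c$-packing $Q=\{Q_1,\dots,Q_m\}$ of $B(\Sigma,\delta)\cap K$. By definition $m\le \mathcal{M}(\delta/c,B(\Sigma,\delta)\cap K)\le \mkloc(\delta,c)$. Since $Q$ is maximal, it is also a $\delta/c$-covering of $B(\Sigma,\delta)\cap K$. Because $c\ge 2$, we have $\delta/c\le \delta/2$, so every point of $P$ lies in some ball $B(Q_j,\delta/2)$.

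\textbf{Step 2 (fine count).} The points of $P$ falling in $B(Q_j,\delta/2)\cap K$ are $\delta/(2c)=(\delta/2)/c$-separated. Hence their number is at most $\mathcal{M}((\delta/2)/c,B(Q_j,\delta/2)\cap K)\le \mkloc(\delta/2,c)$, using that $Q_j\in K$. Summing over $j$ gives
\[
|P|\le \mkloc(\delta,c)\,\mkloc(\delta/2,c).
\]
Taking the supremum over $\Sigma$ yields $\mkloc(\delta,2c)\le \mkloc(\delta,c)\,\mkloc(\delta/2,c)$.

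\textbf{Step 3 (monotonicity).} Since $K$ is star-shaped, \Cref{lemma:loc_metric_entropy_noninc} gives $\mkloc(\delta,c)\le \mkloc(\delta/2,c)$. Taking logarithms then gives the claim.

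The only delicate point is Step 1, where the covering radius must be at most the local radius used in Step 2. This is exactly where the hypothesis $c\ge 2$ enters. Two minor technicalities remain:
\begin{itemize}
\item Finiteness of the packings. This follows from boundedness of $K$ in finite dimensions, so maximal packings exist.
\item Strict versus non-strict separation conventions in the packing definition. These are handled consistently with \Cref{def:local_metric_entropy}.
\end{itemize}
Star-shapedness is used only through the monotonicity lemma.
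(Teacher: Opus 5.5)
Your proof is correct and follows essentially the paper's argument: a coarse $\delta/c$-net of cardinality at most $\mkloc(\delta,c)$, each cluster of $P$ lying in a $\delta/2$-ball around a net point (this is where $c\geq 2$ is used) and containing at most $\mkloc(\delta/2,c)$ points, then monotonicity for star-shaped $K$. The only cosmetic difference is that the paper takes the coarse net $Q$ to be a maximal $\delta/c$-separated subset of $P$ itself, whereas you take a maximal $\delta/c$-packing of the whole ball $B(\Sigma,\delta)\cap K$; both give the same bound.
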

\begin{delayedproof}{lemma:2r_bound}
    Fix some $x \in K$ and choose a $\delta/(2c)$ packing $P \subseteq B(x,\delta) \cap K$. Within $P$, further choose a maximal $\delta/c$-separated subset $Q$, and note that $|Q| \leq \mkloc(\delta,c)$. Further note that since $Q$ is a maximal subset, it is a $\delta/c$-covering of $P$. Cluster $P$ by assigning each point in $P$ to a point in $Q$ less than $\delta/c$ distance from said point. For each $q \in Q$, denote its cluster $R_q \subseteq P$. 
    Now, since $c \geq 2$ we have that $\frac{\delta}{c} \leq \frac{\delta}{2}$ and thus $R_q \subseteq B\big(q,\frac{\delta}{2}\big) \cap K$. Recall that points in $R_q$ are $\delta/(2c)$ separated, and thus $R_q$ contains at most $\mkloc\big(\frac{\delta}{2},c\big)$ points. Now, over $|Q|$ clusters we have that \[|P| \leq |Q|\mkloc\bigg(\frac{\delta}{2},c\bigg) \leq \mkloc(\delta,c)\mkloc\bigg(\frac{\delta}{2},c\bigg) \leq \mkloc\bigg(\frac{\delta}{2},c\bigg)^2,\] with the last inequality from monotonicity. Taking the supremum over $P$ and $x$ then taking logs completes the proof.
\end{delayedproof}

\begin{lemma}[Useful Constants] \label[lemma]{lemma:useful_constants}
    Let $\varepsilon \in (0,1/32)$ be a known constant. Define 
    \begin{align*}
        g(t) &\coloneqq \bigg(\frac{1}{2} + t\bigg) \log\bigg(\frac{1}{2} + t\bigg) + \bigg(\frac{1}{2} - t\bigg) \log\big(1 - 2t\big)
    \end{align*}
    There exist positive constants $C$, $C_1$, $C_2$, $C_3$, $C_4$, $C_6$, $D_1$, $D_2$, $D_3$, $D_4$, $D_5$, $D_6$, and $\alpha \in \Big(0,\frac{1}{2}\Big)$ with the following properties: Define $D_1$ as some constant such that $\big(\E|\bar{V}|^p\big)^{1/p} \leq \Upsilon D_1p$ for $\bar{V} = \tilde{V} - \E\tilde{V}$ as defined in the proof of lemma \ref{lemma:trimmed_mean}. Let $D_3 \coloneqq \sqrt{\frac{8}{3}\big(12\log 4 + 12C_2\big)}$, $D_4 \coloneqq 8 + 3D_3^2/8$, $D_5 \coloneqq 2D_2 \sqrt{D_4} + C_6$, $D_6 \coloneqq \Big(1 - \frac{D_2D_4}{C_1(C-2-C_6)}\Big)$, $C_6 \coloneqq 2 \sqrt{\log 4 + C_2}$, and $\rho \coloneqq \exp\bigg\{-C_3 \min\Big(\frac{(C-2)^2\delta^2}{\Upsilon^2}, \frac{(C-2)\delta}{\Upsilon}\Big)\bigg\}$. Then
    \begin{enumerate}[label=(\roman*)]
        \item $C_6 < D_5 < C - 2$ \label{constant_property_1}
        \item $D_6 \in (0,1)$ \label{constant_property_2}
    \end{enumerate}
    Furthermore, if $\deltafunc \geq \thenewerboundary$: 
    \begin{enumerate}[label=(\roman*)]\setcounter{enumi}{2}
        \item $\exp\bigg\{-C_3^\star \min\Big(\frac{(C-2)^2\delta^2}{\Upsilon^2}, \frac{(C-2)\delta}{\Upsilon}\Big)\bigg\} \leq \frac{1}{2} \exp\bigg\{-C_3 \min\Big(\frac{(C-2)^2\delta^2}{\Upsilon^2}, \frac{(C-2)\delta}{\Upsilon}\Big)\bigg\}$, where $C_3^\star$ is the sub-exponential constant given in the proof of Lemma \ref{lemma:median}.\label{constant_property_4}
        \item $\Big(\frac{1}{2} - \alpha \Big) \log\Big(\frac{1}{\rho} \Big) \geq -2g(\alpha)$ \label{constant_property_5}
        \item $\varepsilon < \alpha(1-\rho)$ \label{constant_property_6}
    \end{enumerate}
\end{lemma}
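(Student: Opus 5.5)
The plan is to choose the constants in a specific order, each depending only on those fixed before it, so that the five properties become consequences of the order of choice.

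\textbf{First}, fix $C_2>0$ arbitrarily, say $C_2=1$; $D_1$ is already given by the moment bound for $\bar V$. This fixes $C_6=2\sqrt{\log 4+C_2}$, $D_3$ and $D_4=8+3D_3^2/8$. The only unfixed constants are $D_2$ (coming from the trimmed mean analysis, and depending only on $D_1$ and absolute quantities), $C$ and $C_1$.

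\textbf{Properties (i) and (ii).} With $D_2$ and $D_4$ fixed, $D_5=2D_2\sqrt{D_4}+C_6$ is a fixed number and $D_5>C_6$ trivially. Now choose $C>2+D_5$, which gives (i). For (ii), note that $C-2-C_6>C-2-D_5>0$, so $D_6<1$ and the denominator in $D_6$ is positive. Choosing $C_1>D_2D_4/(C-2-C_6)$ then makes $D_6>0$.

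\textbf{Remaining properties.} These hold under $\delta^2/\Upsilon^2\ge(4D_4)^{-1}$. Set $m\coloneqq\min\big((C-2)^2\delta^2/\Upsilon^2,(C-2)\delta/\Upsilon\big)$. In this regime $\delta/\Upsilon\ge(4D_4)^{-1/2}$, so
\[
m\ge m_0\coloneqq\min\big((C-2)^2/(4D_4),(C-2)/\sqrt{4D_4}\big).
\]
Increasing $C$ makes $m_0$ as large as we like without breaking (i) or (ii); we only have to re-choose $C_1$ afterwards.

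\begin{itemize}
\item \emph{Property (iii).} Take $C_3<C_3^\star$, for example $C_3=C_3^\star/2$. Then (iii) is equivalent to $e^{-(C_3^\star-C_3)m}\le 1/2$, which holds once $(C_3^\star/2)m_0\ge\log 2$. This is one more lower bound on $C$.
\item \emph{Properties (iv) and (v).} First fix $\alpha\in(1/32,1/2)$, say $\alpha=1/4$. A direct check shows $g(\alpha)<0$ for $t\in(0,1/2)$: near $0$, $g(t)\approx -\tfrac12\log 2$. So $-2g(\alpha)$ is a fixed positive number. Property (iv) becomes $(1/2-\alpha)C_3m\ge -2g(\alpha)$, again a lower bound on $m_0$ and hence on $C$. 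Property (v) needs $\varepsilon<\alpha(1-\rho)$. Since $\varepsilon<1/32$, it suffices that $\alpha(1-e^{-C_3m_0})\ge 1/32$. With $\alpha=1/4$ this is $e^{-C_3m_0}\le 7/8$, another lower bound on $C$.
\end{itemize}

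The main obstacle is circularity. $C_3^\star$ comes from the proof of the median lemma and must not depend on $C$; as a sub-exponential Bernstein constant it should be absolute. Likewise $D_2$ must not depend on $C$ or $C_1$. I would confirm both dependencies before finalizing the order. Once that is done, the order $C_2\to D_1,D_2,C_6,D_3,D_4\to\alpha,C_3\to C\to C_1$ is consistent.
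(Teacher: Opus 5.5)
Your proposal is correct and follows the paper's proof essentially step by step: fix $C_2$, set $C_3=C_3^\star/2$, take $C$ large enough that $C-2>D_5$ and that $m_0$ clears the thresholds for (iii) and (iv), and choose $C_1>D_2D_4/(C-2-C_6)$ last. The only difference is how you handle (v). The paper picks $\alpha$ with $h(\alpha)=\alpha\bigl(1-e^{-f(\alpha)}\bigr)>1/32$, citing Lemma B.1 of Prasadan--Neykov, so that (v) follows directly from (iv); you instead fix $\alpha=1/4$ and impose $e^{-C_3m_0}\le 7/8$ as a separate condition on $C$, which is harmless and in fact redundant, since (iv) already forces $\rho\le e^{-f(1/4)}\approx e^{-3.1}$.
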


\begin{delayedproof}{lemma:useful_constants}
	First, define the following helpful functions: 
    \begin{itemize}
        \item $f(a) \coloneqq -\frac{2g(a)}{\frac{1}{2} - a}$
        \item $h(a) \coloneqq a \Big(1 - \exp\big\{-f(a)\big\}\Big)$.
    \end{itemize}
	Next, pick the constants as follows:
    \begin{enumerate}[label=\arabic*.] 
        \item Choose $C_2 > 0$ arbitrarily.
        \item Choose $\alpha \in \Big(0,\frac{1}{2}\Big)$ s.t. $h(\alpha) > \frac{1}{32}$. Note that this is possible as $h$ is a surjection onto $\Big(0,\frac{1}{2}\Big)$, as proved in Lemma B.1 in \cite{prasadan_information_2025}. \label{choose_alpha} 
        \item Pick $C_3 = C_3^\star/2$. \label{choose_C_3}
        \item Choose $C$ large enough to satisfy
        \begin{align}
            C > 2 + \Bigg( D_5  \bigvee 2\sqrt{D_4} \max\bigg\{\sqrt{\frac{2\log 2}{C_3^\star}},\frac{2\log 2}{C_3^\star}\bigg\} \Bigg) \label{eq:choose_C_ineq}
        \end{align} 
        and
        \begin{align}
             C_3 \min\Big(\frac{(C-2)^2}{4D_4},\frac{(C-2)}{2\sqrt{D_4}}\Big) \geq f(\alpha). \label{choose_C_fa}
        \end{align}\label{choose_C}
        given the choice of $\alpha$. 
        \item Finally, choose $C_1 > \frac{D_2D_4}{C-2-C_6}$. \label{choose_C_1_star}
    \end{enumerate}
    Now that we have specified how we choose our constants, we show that they satisfy \ref{constant_property_1}-\ref{constant_property_6}.
    \begin{enumerate}[label=(\roman*)]
        \item Clearly holds from Step \ref{choose_C} above and the definition of $D_5$.
        \item Clearly holds from Steps \ref{choose_C} and \ref{choose_C_1_star} above.
        \item Let $J(\delta,C) \coloneqq \min\Big(\frac{(C-2)^2\delta^2}{\Upsilon^2}, \frac{(C-2)\delta}{\Upsilon}\Big)$
       {\small \begin{align*}
            \exp\big\{-C_3^\star J(\delta,C)\big\} &\overset{(i)}{\leq} \exp\bigg\{-\Big(\frac{1}{J(\delta,C)}\log 2 + C_3\Big)J(\delta,C)\bigg\}
            \\ &= \exp\big\{-\log2-C_3 J(\delta,C)\big\}
            \\ &= \frac{1}{2} \exp\big\{-C_3 J(\delta,C)\big\},
        \end{align*}}
        where $(i)$ holds if $0 < C_3 \leq C_3^\star - \frac{\log 2}{J(C,\delta)}$. Let us show this holds for our choice of $C$ and $C_3$. First note that setting $C_3=C_3^\star/2$ we can re-arrange this as $J(C,\delta)\geq \frac{2\log 2}{C_3^\star}$. Thus, it suffices to show that $\frac{(C-2)^2\delta^2}{\Upsilon^2} \geq \frac{2 \log 2}{C_3^\star}$ and $\frac{(C-2)\delta}{\Upsilon} \geq \frac{2\log 2}{C_3^\star}$. From the bound on $\frac{\delta^2}{\Upsilon^2}$ and choice of C in step \ref{choose_C} we have that $(C-2)\delta/\Upsilon \geq \frac{C-2}{2\sqrt{D_4}} \geq \frac{2\log2}{C_3^\star}$. The squared version holds analogously.
         \item First note we can rearrange this so the inequality we want to show is $\log\Big(\frac{1}{\rho}\Big) \geq f(\alpha)$. Substitute in $\rho \coloneqq \exp\bigg\{-C_3 \min\Big(\frac{(C-2)^2\delta^2}{\Upsilon^2}, \frac{(C-2)\delta}{\Upsilon}\Big)\bigg\}$ and calculate
        \begin{align*}
            \log(1/\rho) &= C_3\min\Big(\frac{(C-2)^2\delta^2}{\Upsilon^2}, \frac{(C-2)\delta}{\Upsilon}\Big)
            \\ & \overset{(i)}{\geq} C_3 \min\Big(\frac{(C-2)^2}{4D_4},\frac{(C-2)}{2\sqrt{D_4}}\Big)
            \\ &\overset{(ii)}{\geq} f(\alpha)
        \end{align*}
        where $(i)$ is due to the bound on $\frac{\delta^2}{\Upsilon^2}$ and $(ii)$ is due to the construction of $C$ in Step \ref{choose_C}
        \item 
        \begin{align*}
            \alpha(1-\rho) &\overset{(i)}{\geq} h(\alpha) 
            \\ &\overset{(ii)}{>} \frac{1}{32}
            \\ &\overset{(iii)}{\geq} \varepsilon
        \end{align*}
        where $(i)$ is the bound on $\rho$ calculated in the proof of Property \ref{constant_property_4}, $(ii)$ is due to the construction of $\alpha$ in Step \ref{choose_alpha} above, and $(iii)$ is due to the condition on $\varepsilon$ in Condition \ref{cond:corruption}.
    \end{enumerate}
\end{delayedproof}

With this, we can proceed to the proof of Lemma \ref{lemma:trimmed_mean}.

\subsection{Proof of Lemma \ref{lemma:trimmed_mean}}
\begin{delayedproof}{lemma:trimmed_mean}
	Let $ C, C_1 ,C_2 $ and $ C_6  $ be the constants from Lemma \ref{lemma:useful_constants}. Denote $ \delta _0:=\exp\Big[ -C_2N\deltafunc \Big] $; $ \tilde{\varepsilon } := 8\varepsilon + 12\log(4/\delta _0)/N $. We first  verify the following conditions so that we can apply the result of Lugosi and Mendelson \cite{lugosi_robust_2021}: 
    Define the confidence level as $ \delta _0 \coloneqq \exp\Big\{ -C_2N\deltafunc  \Big\} $. Thus, by the condition that $ \deltafunc \leq \thenewerboundary$:
        \begin{align*}
             \exp\bigg\{ -C_2N\deltafunc \bigg\} \geq & \exp\big\{ -C_2N\cdot \thenewerboundary \big\} > \exp\{-N\}/4.
        \end{align*}
    Now we show that the quantile level $ \tilde{\varepsilon } $ as defined in Lugosi and Mendelson \cite{lugosi_robust_2021} is valid. We have
    \begin{align*}
        \tilde{\varepsilon } &= 8\varepsilon + 12\frac{ \log \bigg(4/\exp\Big\{ -C_2N\deltafunc \Big\}\bigg) }{ N }
        \\ &= 8\varepsilon + \frac{ 12\log 4 }{ N } + 12C_2\deltafunc
        \\ &\overset{(i)}{ \leq } 8\varepsilon +  \deltafunc\cdot \big(12\log 4 + 12C_2\big)
        \\ &\overset{(ii)}{ \leq }  8\varepsilon + \thenewerboundary\cdot \big(12\log 4 + 12C_2\big)
        \\ & \overset{(iii)}{ < } \frac{ 1 }{ 2 } 
    \end{align*}
    where $ (i) $ is due to the fact that  $ \deltafunc \geq N^{-1} $, $ (ii) $ is due to the fact that $ \deltafunc \leq \thenewerboundary $, and $ (iii) $ is due to the condition that $ \varepsilon <1/32 $.
    Now, let $m = \E \tilde{V}_i$ and $|\bar{V}| \coloneqq \big|\tilde{V} - m\big|$. With probability $1-\delta_0$: 
    \begin{align}
        \big|TM(\{V\}_{i=1}^{2N})-m\big| &\leq \underbrace{\vphantom{\sqrt{\frac{\log(4/\delta_0)}{N}}} 3 \mathcal{E}\big(4\tilde{\varepsilon}\big) }_\textbf{A} + \underbrace{2\sigma_V\sqrt{\frac{\log(4/\delta_0)}{N}}}_\textbf{B} \label{eq:tm_bound1}
    \end{align}
    due to Theorem 1 in \cite{lugosi_robust_2021} and where $\sigma_V^2$ is the variance of $\tilde{V}$. Here we use $\mathcal{E}$ as defined in \cite{lugosi_robust_2021}: 
    \begin{align*} 
        \mathcal{E}(\eta) \coloneqq \max\Big\{\E\big[|\bar{V}|\indicator{\bar{V} \leq Q_{\eta/2}(\bar{V})}\big],\E\big[|\bar{V}|\indicator{\bar{V} \geq Q_{1-\eta/2}(\bar{V})}\big]\Big\}
    \end{align*}
    Now, we handle each part on the right hand side of \eqref{eq:tm_bound1} separately. 
    
    First we bound \textbf{B}
    \begin{align*}
        2\sqrt{\sigma_V^2\frac{\log\big(4/\delta_0\big)}{N}} &= 2\sqrt{\sigma_V^2\frac{\log\bigg(4/\exp\Big\{-C_2N\deltafunc\Big\}\bigg)}{N}}
        \\ &= 2\sqrt{\sigma_V^2 \bigg(\frac{\log 4}{N} + C_2\deltafunc\bigg)}
        \\ &\overset{(i)}{\leq} 2 \sqrt{\sigma_V^2\bigg(\deltafunc\log 4 + C_2\deltafunc\bigg)}
        \\ & \overset{(ii)}{\leq} 2 \sqrt{\delta^2\log 4 + C_2\delta^2} 
        \\ &= \delta \cdot 2 \sqrt{\log 4 + C_2} 
        \\ &\eqqcolon  C_6 \delta
    \end{align*} 
    where $(i)$ holds because $\deltafunc \geq N^{-1}$, $(ii)$ holds because $\sigma_V^2 \leq \Upsilon^2$ (\Cref{lemma:D4_bounded}), and we define $C_6 \coloneqq 2\sqrt{\log 4 + C_2}$.
    Next we bound \textbf{A}. Following the argument of the remark following Theorem 1 of Lugosi and Mendelson \cite{lugosi_robust_2021}, but with the sub-Exponential bound $(\E|\bar{V}|^p)^{1/p} \leq D_1 \Upsilon p$ replacing the sub-Gaussian bound and noting that for $p=\max\{2,\log(1/\tilde\varepsilon)\}$ \[3\mathcal{E}(4\tilde{\varepsilon}) \leq 3\|\bar{V}\|_{L^p} (2\tilde\varepsilon)^{1-1/p} \lesssim \Upsilon \tilde\varepsilon\log(1/\tilde{\varepsilon}),\] we have that for some constant $D_2$ (depending on $D_1$):
    \begin{align*}
        3\mathcal{E}(4\tilde{\varepsilon})&\leq D_2\Upsilon\tilde{\varepsilon}\log(1/\tilde\varepsilon)
        \\ &\overset{(i)}{\leq} D_2\Upsilon\tilde{\varepsilon}\log(1/\varepsilon)
        \\ &= D_2\Upsilon\bigg(8\varepsilon + 12\frac{\log(4/\delta_0)}{N}\bigg) \log(1/\varepsilon)
        \\ &= D_2\Upsilon\bigg(8\varepsilon + \frac{12\log4}{N} + \frac{12C_2\delta^2}{\Upsilon^2}\bigg) \log(1/\varepsilon)
        \\ &\overset{(ii)}{\leq} D_2\Upsilon\bigg(8\varepsilon + \frac{12\delta^2\log4}{\Upsilon^2} + \frac{12C_2\delta^2}{\Upsilon^2}\bigg) \log(1/\varepsilon)
        \\ &= D_2\Upsilon\bigg(8\varepsilon+ \frac{3D_3^2\delta^2}{8\Upsilon^2}\bigg) \log(1/\varepsilon)
    \end{align*}
    where $(i)$ is due to the fact that $\varepsilon \leq \tilde{\varepsilon}$ and $(ii)$ is due to $\frac{\delta^2}{\Upsilon^2} \geq N^{-1}$, defining \\$D_3 \coloneqq \sqrt{\frac{8}{3}(12\log 4 + 12C_2)}$. Now consider two sub-cases based on whether $\frac{\delta^2}{\Upsilon^2} \lessgtr \varepsilon$.
    \\\textbf{Case 1: $\frac{\delta^2}{\Upsilon^2} < \varepsilon$.}
            \begin{align*}
                3\mathcal{E}(4\tilde\varepsilon) \leq D_2\Upsilon\bigg(8\varepsilon+ \frac{3D_3^2}{8}\varepsilon\bigg) \log(1/\varepsilon) = D_2D_4\Upsilon\varepsilon\log(1/\varepsilon)
            \end{align*}
            for $D_4 \coloneqq 8 + 3D_3^2/8$
    \\\textbf{Case 2: $\frac{\delta^2}{\Upsilon^2} \geq \varepsilon$.}
            \begin{align*}
                \tilde\varepsilon = 8\varepsilon + \frac{12\log 4}{N} + \frac{12C_2\delta^2}{\Upsilon^2} \leq \big(8 + 12 \log 4 + 12C_2\big) \frac{\delta^2}{\Upsilon^2} = D_4 \frac{\delta^2}{\Upsilon^2}.
            \end{align*} Now, we have as well that $D_4 \frac{\delta^2}{\Upsilon^2}\leq D_4(\thenewerboundary)=1/4$. Let $s(x)$ be the map \\$s(x):x \mapsto x\log(1/x)$. Then $s$ is positive and real-valued, and upper bounded by 1 on $x\in(0,1)$. It is also increasing on $(0,1/e)$ and has limit $0$ as $x \to 0+$. It then follows that $\tilde\varepsilon \leq D_4\frac{\delta^2}{\Upsilon^2}\leq 1/4 \implies s(\tilde\varepsilon) \leq s\Big(D_4\frac{\delta^2}{\Upsilon^2}\Big)$ and $s\Big(\sqrt{D_4}\frac{\delta}{\Upsilon}\Big)<1$. 
            Now, we can apply the bound
            \begin{align*}
                3\mathcal{E}(4\tilde{\varepsilon}) &\leq D_2\Upsilon s(\tilde{\varepsilon})
                \\&\leq D_2\Upsilon \bigg(D_4\frac{\delta^2}{\Upsilon^2}\bigg)\log\bigg(\frac{\Upsilon^2}{\delta^2 D_4}\bigg)
                \\ &= 2D_2 \sqrt{D_4}\delta s\bigg(\sqrt{D_4}\frac{\delta}{\Upsilon}\bigg)
                \\&\leq 2D_2 \sqrt{D_4} \delta.
            \end{align*}
    Combining cases, with probability at least $1-\delta_0$ we have
            \begin{align}
                |TM_{\delta_0}(\{V_i\}_{i=1}^{2N}) - m| \leq \begin{cases}
                    C_6\delta + D_2D_4 \Upsilon\varepsilon\log(1/\varepsilon) & \text{if } \varepsilon > \frac{\delta^2}{\Upsilon^2}
                    \\ D_5\delta &\text{if }\varepsilon\leq \frac{\delta^2}{\Upsilon^2}
                \end{cases} \label{eq:tm_m_bounds}
            \end{align}
            for $D_5 \coloneqq 2D_2 \sqrt{D_4} + C_6$. 
    Now, since $C-2 > D_5 > C_6$ due to Lemma \ref{lemma:useful_constants} Property \ref{constant_property_1}, $m \leq -(C-2)\delta$, and $\frac{\delta^2}{\Upsilon^2} \leq \thenewerboundary$, we have
    \begin{align*}
        \Prob_{\Sigma \in B(\Sigma_1,\delta)} \Big( \mathrm{ TM } (\{V_i\}_{i=1}^{2N}) \geq 0  \Big) &\leq \Prob_{\Sigma \in B(\Sigma_1,\delta)} \Big( |\mathrm{ TM } (\{V_i\}_{i=1}^{2N}) - m| \geq (C-2)\delta \Big).
    \end{align*}
    We return to our cases on $\varepsilon$.
    \\\textbf{Case 1: $\varepsilon \leq \frac{\delta^2}{\Upsilon^2}$.} Applying \eqref{eq:tm_m_bounds} and $C-2> D_5$ we get
    \begin{align*}
        \Prob_{\Sigma \in B(\Sigma_1,\delta)} \Big( |\mathrm{ TM } (\{V_i\}_{i=1}^{2N}) - m| \geq (C-2)\delta \Big) &\leq  \Prob_{\Sigma \in B(\Sigma_1,\delta)} \Big( |\mathrm{ TM } (\{V_i\}_{i=1}^{2N}) - m| > D_5\delta \Big)
        \\ &\leq \delta_0
        \\ &= \exp\bigg\{-C_2N\frac{\delta^2}{\Upsilon^2}\bigg\}
    \end{align*}
    as required. 
    \\\textbf{Case 2: $\varepsilon > \frac{\delta^2}{\Upsilon^2}$.} Recall that we also have the bound $\frac{\delta^2}{\Upsilon^2} \geq C_1^2 \varepsilon^2\log^2(1/\varepsilon)$, so
    \begin{align*}
        \delta - \frac{D_2D_4}{C-2-C_6}\Upsilon s(\varepsilon) \geq \bigg(1 - \frac{D_2D_4}{C_1(C-2-C_6)}\bigg)\delta = D_6\delta
    \end{align*}
    for $D_6 \coloneqq \Big(1 - \frac{D_2D_4}{C_1(C-2-C_6)}\Big)$, due to $D_6 \in (0,1)$ from Lemma \ref{lemma:useful_constants} Property \ref{constant_property_2}.
    Now we can finally achieve the desired probability bound
    \begin{align*}
        \Prob_{\Sigma \in B(\Sigma_1,\delta)}& \Big( |\mathrm{ TM } (\{V_i\}_{i=1}^{2N}) - m| \geq (C-2)\delta \Big) 
        \\ &\overset{(i)}{=}\Prob_{\Sigma \in B(\Sigma_1,\delta)} \Big( |\mathrm{ TM } (\{V_i\}_{i=1}^{2N}) - m| \geq C_6\delta + D_2D_4\Upsilon s(\varepsilon) 
        \\ & \phantom{=} + (C-2-C_6)\delta - D_2D_4\Upsilon s(\varepsilon) \vphantom{X^2} \Big)
        \\ &=\Prob_{\Sigma \in B(\Sigma_1,\delta)} \Big(\vphantom{\frac{X}{Y}} |\mathrm{ TM } (\{V_i\}_{i=1}^{2N}) - m| \geq C_6\delta + D_2D_4\Upsilon s(\varepsilon) 
        \\ & \phantom{=} + (C-2-C_6)\Big(\delta - \frac{D_2D_4}{C-2-C_6}\Upsilon s(\varepsilon) \Big) \Big)
        \\ &\leq \Prob_{\Sigma \in B(\Sigma_1,\delta)} \Big( |\mathrm{ TM } (\{V_i\}_{i=1}^{2N}) - m| \geq C_6\delta + D_2D_4\Upsilon s(\varepsilon) 
        \\ &\phantom{=} + (C-2-C_6)D_6\delta \Big)
        \\ &\overset{(ii)}{\leq }\Prob_{\Sigma \in B(\Sigma_1,\delta)} \Big( |\mathrm{ TM } (\{V_i\}_{i=1}^{2N}) - m| > C_6\delta + D_2D_4\Upsilon s(\varepsilon) \Big)
        \\ &\overset{(iii)}{\leq} \exp\bigg\{-C_2N\frac{\delta^2}{\Upsilon^2}\bigg\}
    \end{align*}
    where $(i)$ is just a rearranging of terms, $(ii)$ is due to the fact that $C-2> C_6$, $D_6 > 0$ and $\delta > 0$, and $(iii)$ is the bound from \eqref{eq:tm_m_bounds}.
\end{delayedproof}

\begin{delayedproof}{lemma:median}
    It suffices to prove for arbitrary $N^\prime$ that
    \begin{align}
        \Prob_{\Sigma \in B(\Sigma_1,\delta)}&  \left(\left|\left\{i \in [N^\prime] : \|\xi_i\xi_i^\top - \Sigma_1\| \geq \|\xi_i\xi_i^\top - \Sigma_2\|\right\}\right| \geq \frac{N^\prime}{2}\right) \nonumber 
        \\ & \leq \expfunc{-C_4N \newRHS} 
    \end{align}
    Let constants $C$ be defined as in \Cref{lemma:trimmed_mean} and $ C_3,\alpha  $ as in \Cref{lemma:useful_constants}. We begin by proving the probability tail property of the uncorrupted $ \tilde{V_i} $ and then argue that the error rate of median estimator based on the corrupted $ V_i $ is bounded by that based on the uncorrupted $ \tilde{V_i} $.
    \\ Let $\hat{v} = \frac{\operatorname{vec}\left((\Sigma_2 - \Sigma_1)^{\top}\right)}{\|(\Sigma_2 - \Sigma_1)\|_F}$, then 
    \begin{align*}
        \Prob_{\Sigma \in B(\Sigma_1,\delta)}\left(\tilde{V}_i > 0\right) &\leq \Prob_\Sigma\left(2\hat{v}^\top\operatorname{Vec}(\xi_i\xi_i^\top - \Sigma) > (C-2)\delta \right) 
        \\ &\overset{(i)}{\leq} \expfunc{-C_3^\star \min\left(\frac{(C-2)^2\delta^2}{\Upsilon^2}, \frac{(C-2)\delta}{\Upsilon}\right)}
     \end{align*}
    where $(i)$ is due to in \textcite[Proposition 5.16]{vershynin_introduction_2011}. Let $J(\delta,C) \coloneqq \min\left(\frac{(C-2)^2\delta^2}{\Upsilon^2}, \frac{(C-2)\delta}{\Upsilon}\right)$ and note that
    \begin{align*}
        \Prob_{\Sigma \in B(\Sigma_1,\delta)}\left(\tilde{V}_i > 0\right) &\leq \expfunc{-C_3^\star J(\delta,C)}  
    \\ & \overset{(i)}{\leq} \frac{1}{2} \expfunc{-C_3J(\delta,C)} 
        \\ &= \frac{1}{2}\rho
    \end{align*}
    where $(i)$ is due to the definition of $C$ and $C_3$ in \Cref{lemma:useful_constants} and $\rho \coloneqq \expfunc{-C_3J(\delta,C)}$
    \\ Now consider two events: 
    \begin{align*}
        A_i &\coloneqq \left\{ \|\tilde{\xi}_i\tilde{\xi}_i^\top - \Sigma_1 \| \geq  \|\tilde{\xi}_i\tilde{\xi}_i^\top - \Sigma_2\| \right\}
        \\ B_i &\coloneqq \left\{\|\xi_i\xi_i^\top - \Sigma_1 \| \geq \|\xi_i\xi_i^\top - \Sigma_2\|\right\} 
    \end{align*}
    for uncorrupted data $\tilde{\xi}$ and corrupted data $\xi$. Next, define the following estimators based on these events: 
    \begin{align*}
        \psi &\coloneqq \indicator{\textrm{at least } \frac{N^\prime}{2} \textrm{ of } B_i \textrm{ are true}}
        \\ \tilde{\psi} &\coloneqq \indicator{\textrm{at least } \frac{N^\prime}{2} - N^\prime\alpha(1-\rho) \textrm{ of } A_i \textrm{ are true}}
    \end{align*}
    We will continue by showing that for each $\Sigma \in B(\Sigma_1,\delta)$, we can bound $\Prob_\Sigma(\tilde{\psi} = 1)$. Then, we will show that this is sufficient for our proof by proving that $\Prob_{\Sigma \in B(\Sigma_1,\delta)}(\psi = 1) \leq \Prob_{\Sigma \in B(\Sigma_1,\delta)}(\tilde{\psi} = 1)$.
    
    Notice that the event $\tilde{\psi}=1$ can be bounded by the tail probability of a binomial random variable, and set $\zeta = \left(\frac{1}{2} - \alpha\right)(1-\rho)$ and $p = 1 - \rho/2$, so  $p - \zeta = \frac{1}{2} + \alpha(1-\rho)$ and $\Prob(A_i^C) \geq p$. Then 
    \begin{align}
        \Prob_\Sigma(\tilde{\psi} = 1) &= \Prob\left(Bin\left(N^\prime,\Prob_\Sigma\left(A_i^C\right)\right) \leq \frac{N^\prime}{2} + N^\prime \alpha(1-\rho)\right) \nonumber
        \\ &= \Prob\left(Bin\left(N^\prime,\Prob_\Sigma\left(A_i^C\right)\right) \leq N^\prime(p - \zeta)\right) \nonumber 
        \\ &\leq \expfunc{-N^\prime D((p - \zeta)\| p)} \label{eq:kl_bin_ineq}
    \end{align}
    and thus by the same argument used in \textcite[proof of Lemma B.4]{prasadan_information_2025}: 
    \begin{align*}
        D(p - \zeta \| p) &\geq g(\alpha) + \left(\frac{1}{2} - \alpha\right) \log(1 /\rho)
        \\ &\overset{(i)}{\geq }\frac{1}{2}\left(\frac{1}{2} - \alpha\right) \log(1/\rho) 
        \\ &= \frac{1}{2}\left(\frac{1}{2} - \alpha \right) \log\left(\expfunc{C_3J(\delta,C)}\right)
        \\ &= \frac{1}{2}\left(\frac{1}{2} - \alpha \right) C_3J(\delta,C)
    \end{align*}
    where $(i)$ is due to \Cref{lemma:useful_constants} part \ref{constant_property_5}. Plugging this into (\ref{eq:kl_bin_ineq}) gives
    \begin{align*}
        \Prob_{\Sigma \in B(\Sigma_1,\delta)}(\tilde\psi = 1) &\leq \expfunc{-N^\prime \frac{1}{2}\left(\frac{1}{2} - \alpha\right) \log(1/\rho)}
        \\ &= \expfunc{-N^\prime \frac{1}{2}\left(\frac{1}{2} - \alpha \right) C_3J(\delta,C)}.
    \end{align*}
    With this, we have a bound for the uncorrupted case. Let us then show how this bounds the corrupted case. Assume $\psi = 1$ and thus the event $B_i$ evaluates to true for at least $\frac{N^\prime}{2}$ subjects. Due to the corruption, there are up to $\varepsilon N^\prime$ subjects for which $A_i$ and $B_i$ can evaluate differently, so in the worst case at least $\frac{N}{2} - N^\prime \varepsilon$ of the $A_i$ are true. Furthermore, by \Cref{lemma:useful_constants} Part \ref{constant_property_6} $\alpha(1-\rho) > \varepsilon$. Therefore, if $\psi = 1$ then at least $\frac{N^\prime}{2} - \varepsilon N^\prime > \frac{N^\prime}{2} - N^\prime\alpha(1-\rho)$ of the $A_i$ evaluate to true and thus $\tilde\psi = 1$. Therefore we can bound $\Prob(\psi = 1) \leq \Prob(\tilde\psi = 1)$ and get
    \begin{align*}
        \Prob_{\Sigma \in B(\Sigma_1,\delta)} (\psi = 1) &\leq \Prob_{\Sigma \in B(\Sigma_1,\delta)}\left(\tilde{\psi} = 1\right)
        \\ &\leq \expfunc{-N^\prime\frac{1}{2}\left(\frac{1}{2} - \alpha \right) C_3J(\delta,C)}.
        \\ &= \expfunc{-C_4NJ(\delta,C)},
    \end{align*}
    setting $N^\prime = 2N$ and $C_4 \coloneqq \Big(\frac{1}{2} - \alpha\Big)C_3$. The argument for $\Prob_{\Sigma}(\tilde{\psi}=0)$ is exactly symmetric and so is excluded. 
\end{delayedproof}

\subsection{Proof of \Cref{theorem:upper_bound}} \label[appendix]{appendix:upper_bound_rate}
Before we can prove \Cref{theorem:upper_bound} we must establish a number of lemmas. We begin by bounding the distance between each element in the ``winning'' path of \Cref{algorithm:robust_algorithm} and the true covariance $\Sigma$. 
\begin{lemma} \label[lemma]{lemma:bound_yj_sigma}
    Let $C,C_5$ be the universal constants from \Cref{theorem:two_point_prob_bound} and $C_1$ from \Cref{lemma:useful_constants}. Denote $\delta_J \coloneqq \frac{d}{2^{J-1}(C+1)}$, and let $J^\dagger\in \mathbb{N}$ be the largest $J$ such that 
    \begin{align}
        C_5 N \min\left(\frac{\delta_J^2}{\Upsilon^2},\frac{\delta_J}{\Upsilon}\right) &\geq 4 \log \mkloc(c\delta_J,2c)\bigvee\log 2. \label{eq:delta_mk_ineq}
        \shortintertext{and}
        \frac{\delta_J^2}{\Upsilon^2} &\geq N^{-1} \vee C_1^2\varepsilon^2\log^2(1/\varepsilon). \nonumber
    \end{align}
    If these conditions do not hold for any $J\in\mathbb{N}$ then set $J^\dagger = 1$. Denote $\mathcal{Y}_J$ as the $J^{th}$ element in the ``winning'' path from \Cref{algorithm:robust_algorithm}, $\mathcal{Y}$ for $1 \leq J \leq J^{\dagger}$.\\ Then
    \begin{align}
        \Prob\left(\|\mathcal{Y}_J - \Sigma\|_F > (C+1)\delta_J\right) \leq 2\indicator{J>1} \expfunc{-\frac{1}{2}C_5N\min\bigg(\frac{\delta_J^2}{\Upsilon^2},\frac{\delta_J}{\Upsilon}\bigg)}. \label{eq:y_sigma_frob}
    \end{align}
\end{lemma}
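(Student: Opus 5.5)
We argue by induction on $J$, following the scheme of \textcite[Lemma 3.4]{prasadan_information_2025}. For $J=1$ the claim is deterministic: $\mathcal{Y}_1$ is the root and $\Sigma\in K\subseteq B(\mathcal{Y}_1,d)$, and $d\le (C+1)\delta_1$ because $\delta_1=d/(C+1)$. So the left side of \eqref{eq:y_sigma_frob} is $0$, which matches $\indicator{J>1}=0$. For the inductive step fix $2\le J\le J^\dagger$ and split
\[
\Prob\big(\|\mathcal{Y}_J-\Sigma\|>(C+1)\delta_J\big)\le \Prob\big(\|\mathcal{Y}_{J-1}-\Sigma\|>(C+1)\delta_{J-1}\big)+\Prob\big(\|\mathcal{Y}_J-\Sigma\|>(C+1)\delta_J,\ \|\mathcal{Y}_{J-1}-\Sigma\|\le (C+1)\delta_{J-1}\big).
\]

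To control the second term we work on the event $\|\mathcal{Y}_{J-1}-\Sigma\|\le (C+1)\delta_{J-1}=d/2^{J-2}$, so that $\Sigma\in B(\mathcal{Y}_{J-1},d/2^{J-2})\cap K$. By \Cref{lemma:tree_properties}(ii), the offspring $\mathcal{O}(\mathcal{Y}_{J-1})$ cover this ball at scale $d/(2^{J-2}c)$. With $c=2(C+1)$ this scale equals $\delta_J$, which is exactly the parameter $\frac{d}{2^{J-1}(C+1)}$ fed to the tournament in \Cref{algorithm:robust_algorithm}. We then condition on $\mathcal{Y}_{J-1}$ and apply \Cref{theorem:covering_bound} with $S=\mathcal{O}(\mathcal{Y}_{J-1})$, $K'=B(\mathcal{Y}_{J-1},d/2^{J-2})\cap K$, and $\delta=\delta_J$. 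The hypothesis $\delta_J^2/\Upsilon^2\ge N^{-1}\vee C_1^2\varepsilon^2\log^2(1/\varepsilon)$ holds for every $J\le J^\dagger$: $\delta_J$ is decreasing in $J$ and both conditions hold at $J^\dagger$. This gives the bound
\[
M\exp\{-C_5N\min(\delta_J^2/\Upsilon^2,\delta_J/\Upsilon)\}, \qquad M\le \mkloc(d/2^{J-2},2c)=\mkloc(c\delta_J,2c).
\]

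The main obstacle is this conditioning step. The tree and the node $\mathcal{Y}_{J-1}$ are functions of the data, so we cannot plug them into the union bound directly. We would handle it as Prasadan--Neykov do. The tree $\mathcal{G}$ is deterministic, and only the path depends on the data. We therefore take a union over all nodes $u\in\mathcal{L}(J-1)\cap B(\Sigma,d/2^{J-2})$, together with all pairs of offspring tests, and apply \Cref{theorem:two_point_prob_bound} to each pair. \Cref{lemma:tree_properties}(iii) bounds the number of such $u$ by $\mkloc(c\delta_J,2c)$. This yields at most $\mkloc(c\delta_J,2c)^2$ tests in total, each failing with probability at most $e^{-C_5N\min(\cdot)}$.

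Finally we absorb the entropy. Since \eqref{eq:delta_mk_ineq} holds at $J^\dagger$, and by \Cref{lemma:loc_metric_entropy_noninc} together with the monotonicity of the left side, it holds for all $J\le J^\dagger$. Hence $2\log\mkloc(c\delta_J,2c)\le \tfrac12 C_5N\min(\cdot)$, and the second term is at most $\exp\{-\tfrac12 C_5N\min(\delta_J^2/\Upsilon^2,\delta_J/\Upsilon)\}$. Unrolling the recursion produces
\[
\sum_{j=2}^J \exp\{-\tfrac12 C_5N\min(\delta_j^2/\Upsilon^2,\delta_j/\Upsilon)\}.
\]
Because $\delta_{j-1}=2\delta_J\cdot 2^{J-j}$, the exponents grow at least geometrically as $j$ decreases from $J$. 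The $\log 2$ part of \eqref{eq:delta_mk_ineq} makes the ratio of consecutive terms at most $1/2$, so the sum is dominated by twice its last term. This gives the factor $2$ in \eqref{eq:y_sigma_frob}.
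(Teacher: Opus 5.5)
Your proposal follows the paper's proof essentially step for step. It uses the same telescoping $\Prob(A_J)\le\sum_{j=2}^{J}\Prob(A_j\cap A_{j-1}^c)$ with $A_j=\{\|\mathcal{Y}_j-\Sigma\|_F>d/2^{j-1}\}$. It handles the data-dependent parent the same way, by a union over the deterministic nodes $u\in\mathcal{L}(j-1)\cap B(\Sigma,d/2^{j-2})$. It applies the covering theorem to $\mathcal{O}(u)$ at scale $\delta_j$, which gives the same factor $\mkloc(c\delta_j,2c)^2$. (You do not need all pairs of offspring tests. Only the $|\mathcal{O}(u)|-1$ tests against the offspring nearest to $\Sigma$ are needed, and that is what makes the count $\mkloc^2$ rather than $\mkloc^3$.)

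The one real difference is the order of the last two steps, and it introduces a small slip in your final sentence. You absorb the entropy at every level before summing, so your terms are $e^{-b_j}$ with $b_j=\tfrac12 C_5N\min(\delta_j^2/\Upsilon^2,\delta_j/\Upsilon)$. Since $b_{j-1}\ge 2b_j$, the ratio of consecutive terms is at most $e^{-b_J}$. However, the $\log 2$ part of the entropy condition only gives $b_J\ge\tfrac12\log 2$. So the ratio is at most $2^{-1/2}$, not $1/2$, and your geometric bound as written yields the constant $1/(1-2^{-1/2})\approx 3.4$ instead of $2$.

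The repair is one line. If $e^{-b_J}\ge 1/2$, the right-hand side of the claimed bound is at least $1$ and there is nothing to prove. Otherwise the ratio is below $1/2$, and $\sum_{j\le J}e^{-b_j}\le e^{-b_J}/(1-e^{-b_J})\le 2e^{-b_J}$.

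Alternatively, follow the paper's order:
\begin{enumerate}
\item Use monotonicity of the local entropy in the radius to pull $\mkloc(c\delta_J,2c)^2$ out of all levels.
\item Sum the unhalved terms $a_j=\exp\{-C_5N\min(\delta_j^2/\Upsilon^2,\delta_j/\Upsilon)\}$, using $a_{j-1}\le a_j^2$ and $a_J\le 1/2$. Here the $\log 2$ condition gives exactly $1/2$.
\item Absorb the entropy only once, at the end.
\end{enumerate}
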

\begin{delayedproof}{lemma:bound_yj_sigma}
    First note that if $C_5 N \newRHS \geq 4 \log \mkloc(c\delta_J,2c)\bigvee\log 2$ or $\frac{\delta_J^2}{\Upsilon^2}\geq N^{-1} \vee C_1^2\varepsilon^2\log^2(1/\varepsilon)$ never hold, we set $J^\dagger = 1$ and \Cref{eq:y_sigma_frob} holds trivially. 
    \\ Define $A_j \coloneqq \left\{\|\mathcal{Y}_j - \Sigma\|_F> \frac{d}{2^{j-1}}\right\}$. Then by \textcite[Lemma B.4 (i)]{prasadan_information_2025} we have 
    \begin{align*}
        \Prob(A_J) \leq \Prob(A_1) + \Prob\left(A_2 \cap A_1^C\right) + \suml{j=3}{J} \Prob\left(A_j \cap A_{j-1}^C\right).
    \end{align*}
    Because $\frac{\delta_j^2}{\Upsilon^2} \geq N^{-1} \vee C_1^2\varepsilon^2\log^2(1/\varepsilon)$ for every $1 \leq j \leq J \leq J^\dagger$, we can apply \Cref{theorem:covering_bound} to each layer. We will analyze each term in this sum separately: 
    \begin{itemize}
        \item {$\Prob\left(A_j \cap A_{j-1}^C\right)$}
        \begin{align*}
            &\overset{(i)}{\leq} \suml{u \in \mathcal{L}(j-1)\cap B\left(\Sigma,\frac{d}{2^{j-2}}\right)}{} \Prob\left(\|\mathcal{Y}_j - \Sigma\|_F \geq \frac{d}{2^{j-1}},\mathcal{Y}_{j-1} = u\right)
            \\ &\overset{(ii)}{=}\suml{u \in \mathcal{L}(j-1)\cap B\left(\Sigma,\frac{d}{2^{j-2}}\right)}{} \Prob\left(\left\|\argmin_{\Sigma \in \mathcal{O}(u)}T(\delta_j,\Sigma,\mathcal{O}(u)) - \Sigma\right\|_F \geq (C+1)\delta_j,\mathcal{Y}_{j-1} = u\right)
            \\ &\leq \suml{u \in \mathcal{L}(j-1)\cap B\left(\Sigma,\frac{d}{2^{j-2}}\right)}{} \Prob\left(\left\|\argmin_{\Sigma \in \mathcal{O}(u)}T(\delta_j,\Sigma,\mathcal{O}(u)) - \Sigma\right\|_F \geq (C+1)\delta_j\right)
            \\ &\overset{(iii)}{\leq} \sum_{u \in \mathcal{L}(j-1)\cap B\left(\Sigma,\frac{d}{2^{j-2}}\right)} \mkloc\left(\frac{d}{2^{j-2}},2c\right) \expfunc{-C_5 N\min\left(\frac{\delta_j^2}{\Upsilon^2},\frac{\delta_j}{\Upsilon}\right)}
            \\ &\overset{(iv)}{\leq} \left[\mkloc\left(\frac{d}{2^{j-2}},2c\right)\right]^2 \expfunc{-C_5N \min\left(\frac{\delta_j^2}{\Upsilon^2},\frac{\delta_j}{\Upsilon}\right)}
        \end{align*} where the justification for each step is as follows:
        \begin{enumerate}[label=\textit{(\roman*)}]
            \item $\mathcal{Y}_{j-1}$ must be some point $u$ in the discrete set $\mathcal{L}(j-1) \cap B\left(\Sigma,\frac{d}{2^{j-2}}\right)$.
            \item This follows directly from \Cref{definition:tournament_T}.
            \item By \Cref{lemma:tree_properties} $\mathcal{O}(u)$ is a $\frac{d}{2^{j-1}(C+1)}$ covering of $K \cap B\big(\mathcal{Y}_{j-1},\frac{d}{2^{j-2}}\big)$. Then we can apply \Cref{theorem:covering_bound}.
            \item By \Cref{lemma:tree_properties} \Cref{tree_property_3}
        \end{enumerate}
        \item $\mathbb{P}(A_1) = 0$ trivially
        \item $\Prob(A_2 \cap A_1^C)$\\
        Note that from \Cref{lemma:tree_properties} $A_2$ is a $d/c$-covering of $K \cap B(\Sigma,d)$. Then 
        \begin{align*}
            \Prob\left(A_2 \cap A_1^C\right) &= \Prob(A_2)
            \\ &\leq \Prob\left(\left\|\argmin_{\Sigma \in \mathcal{O}(1)} T(\delta_j,\Sigma,\mathcal{O}(1)) - \Sigma\right\|_F \geq (C+1)\delta_2\right)
            \\ &\leq \mkloc(d,c) \expfunc{-C_5N\min\left(\frac{\delta_2^2}{\Upsilon^2},\frac{\delta_2}{\Upsilon}\right)}
            \\ &\leq \left[\mkloc(d,2c)\right]^2 \expfunc{-C_5N\min\left(\frac{\delta_2^2}{\Upsilon^2},\frac{\delta_2}{\Upsilon}\right)}
        \end{align*} where the final inequality is due to the fact that $\mkloc(d,c)$ is a positive integer.
    \end{itemize}
    Finally we combine the above results as 
    \begin{align*}
        \Prob(A_J) &\leq \Prob(A_1) + \Prob\left(A_2 \cap A_1^C\right) + \suml{j=3}{J} \Prob\left(A_j \cap A_{j-1}^C\right)
        \\ &\leq \sum_{j=2}^J \left[\mkloc\left(\frac{d}{2^{j-2}},2c\right)\right]^2 \expfunc{-C_5N\min\left(\frac{\delta_j^2}{\Upsilon^2},\frac{\delta_j}{\Upsilon}\right)}
        \\ &\leq \left[\mkloc\left(\frac{d}{2^{J-2}},2c\right)\right]^2 \sum_{j=2}^J \expfunc{-C_5 N \min\left(\frac{\delta_j^2}{\Upsilon^2},\frac{\delta_j}{\Upsilon}\right)}
        \\ &\overset{(i)}{\leq} \indicator{J>1} \left[\mkloc\left(\frac{d}{2^{J-2}},2c\right)\right]^2
        \frac{a_J}{1-a_J}
    \end{align*}
    where in $(i)$ we set $a_J \coloneqq \expfunc{-C_5 N \min\left(\frac{\delta_J^2}{\Upsilon^2},\frac{\delta_J}{\Upsilon}\right)}$ and see that the sum is bounded by a geometric series since $\min\big\{\frac{(2x)^2}{\Upsilon^2},\frac{2x}{\Upsilon}\big\} \geq 2\min\big\{\frac{(x)^2}{\Upsilon^2},\frac{x}{\Upsilon}\big\}$. Now due to the condition described in \Cref{eq:delta_mk_ineq}, we have $a_J < \frac{1}{2}$ and $\expfunc{\frac{C_5}{2}N\min\left(\frac{\delta_J^2}{\Upsilon^2},\frac{\delta_J}{\Upsilon}\right)} \geq   \left[\mkloc(c\delta_J, 2c)\right]^2$.
    \\ Then
    \begin{align*}
        \Prob(A_J) &\leq 2\indicator{J>1} \left[\mkloc\left(\frac{d}{2^{J-2}},2c\right)\right]^2 a_J
        \\ &\leq 2\indicator{J>1}\expfunc{\frac{C_5}{2}N\min\left(\frac{\delta_J^2}{\Upsilon^2},\frac{\delta_J}{\Upsilon}\right)}\expfunc{-C_5N\min\left(\frac{\delta_J^2}{\Upsilon^2},\frac{\delta_J}{\Upsilon}\right)}
        \\&= 2\indicator{J>1}\expfunc{-\frac{C_5}{2}N\min\left(\frac{\delta_J^2}{\Upsilon^2},\frac{\delta_J}{\Upsilon}\right)}
    \end{align*}
\end{delayedproof}

Now we have established three conditions under which the probability tail bound holds via \\\Cref{lemma:bound_yj_sigma}, namely
\begin{enumerate}[label=\Roman*.]
    \item $C_5 N \min\left(\frac{\delta_J^2}{\Upsilon^2},\frac{\delta_J}{\Upsilon}\right) \geq 4 \log \mkloc(c\delta_J,2c)\bigvee\log 2$, \label{bound_cond1}
    \item $\frac{\delta_J^2}{\Upsilon^2} \geq N^{-1}$, \label{bound_cond2}
    \item $\frac{\delta_J^2}{\Upsilon^2} \geq C_1^2\varepsilon^2\log^2(1/\varepsilon)$. \label{bound_cond3}
\end{enumerate}
We define the largest $J$ such that each condition holds as follows:
\begin{definition}
    Call $J^{\rom{1}}$ the largest $J$ such that condition \ref{bound_cond1} holds, $J^{\rom{2}}$ the largest $J$ such that condition \ref{bound_cond2} holds, and $J^{\rom{3}}$ the largest $J$ such that condition \ref{bound_cond3} holds. For each condition, if $J^r$ never holds set $J^r =0$. Then $J^\star$ from \Cref{theorem:upper_bound} can be rewritten as $J^\star = J^{\rom{1}} \vee 1$ and $J^\dagger$ from \Cref{lemma:bound_yj_sigma} can be rewritten as $J^\dagger = 1 \vee \min\left(J^{\rom{1}},J^{\rom{2}},J^{\rom{3}}\right)$. 
\end{definition}

\begin{lemma}\label[lemma]{lemma:sigma_star_star_bound}
    Denote $\Sigma^\star \coloneqq \mathcal{Y}_{J^\star}$ and the output at an some step $J^{\star\star} \geq J^\star$ as $\Sigma^{\star\star} = \mathcal{Y}_{J^{\star\star}+1}$. Then there exists a constant $\psi_C$ depending only on $C$ such that $\forall x \geq \delta_{J^\dagger}$
    \begin{align*}
        \Prob\left(\left\|\Sigma - \Sigma^{\star\star}\right\|_F > \psi_Cx\right) &\leq 2 \indicator{J^\star > 1}\expfunc{-\frac{1}{8}C_5N \min\left(\frac{x^2}{\Upsilon^2},\frac{x}{\Upsilon}\right)},
    \end{align*}
    and therefore $\forall \delta \geq \delta_{J^\dagger}$
    \begin{align*}
        \E_\xi \|\Sigma - \Sigma^{\star\star}\|_F^2 \lesssim \delta^2 + \frac{\Upsilon^2}{N}
    \end{align*}
\end{lemma}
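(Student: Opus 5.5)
The approach follows \textcite[Lemma B.5]{prasadan_information_2025}. We first reduce the tail event for $\Sigma^{\star\star}$ to a tail event at a single level of the tree, and then integrate the tail.

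\textbf{Step 1: reduction to one level.} Fix $x \geq \delta_{J^\dagger}$. Let $J \leq J^\dagger$ be the largest index with $\delta_J \geq x$; such an index exists unless $x>\delta_1$. In that case $\|\Sigma-\Sigma^{\star\star}\|_F\leq d\lesssim x$, so the probability is zero once $\psi_C$ is large. Since the $\delta_J$ halve at each step, we have $x \leq \delta_J < 2x$ whenever $J<J^\dagger$. When $J=J^\dagger$ we use $\delta_{J^\dagger}\le x$ directly.

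The output $\Sigma^{\star\star}=\mathcal{Y}_{J^{\star\star}+1}$ lies on the same path as $\mathcal{Y}_J$, and $J^{\star\star}+1\geq J$ because $J^\dagger\le J^\star$. By \Cref{lemma:tree_properties} \ref{tree_property_4},
\begin{align*}
\|\mathcal{Y}_{J^{\star\star}+1}-\mathcal{Y}_J\|_F \le \frac{d(2+4c)}{c2^J} \lesssim \delta_J,
\end{align*}
with a constant depending only on $C$, since $c=2(C+1)$. The triangle inequality then gives
\begin{align*}
\|\Sigma-\Sigma^{\star\star}\|_F \le \|\Sigma-\mathcal{Y}_J\|_F + \kappa_C\,\delta_J .
\end{align*}
Set $\psi_C \coloneqq 2(C+1)+2\kappa_C$. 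Then the event $\{\|\Sigma-\Sigma^{\star\star}\|_F>\psi_C x\}$ is contained in $\{\|\mathcal{Y}_J-\Sigma\|_F>(C+1)\delta_J\}$, and \Cref{lemma:bound_yj_sigma} bounds the probability of the latter by $2\indicator{J>1}\exp\{-\frac12 C_5 N\min(\delta_J^2/\Upsilon^2,\delta_J/\Upsilon)\}$.

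\textbf{Step 2: the case $J=J^\dagger$ and large $x$.} The case $J<J^\dagger$ is done: there $\delta_J\ge x$, and the map $t\mapsto \min(t^2,t)$ is monotone, so the Step 1 bound is already at most the claimed exponent. The case $J=J^\dagger$ with $x$ much larger than $\delta_{J^\dagger}$ is the main obstacle, because then the exponent in $\delta_J$ no longer dominates the one in $x$. I would handle it as in the cited proof. On the events $A_j^C$ for $j\le J^\dagger$ the path stays near $\Sigma$, so the tail of $\|\mathcal{Y}_{J^\dagger}-\Sigma\|$ beyond level $\delta_j$ is controlled by the earlier-level bound for $\delta_j\asymp x$. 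This lets us replace $\delta_J$ by an earlier $\delta_j\in[x,2x)$. The remaining factors of $2$ and $4$ from the dyadic rounding, via $\min((x/2)^2,x/2)\ge \frac14\min(x^2,x)$, turn the $\frac12$ into the stated $\frac18$. The indicator becomes $\indicator{J^\star>1}$ because if $J^\star=1$ then $J^\dagger=1$ and the bound is trivial.

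\textbf{Step 3: the expectation.} Write
\begin{align*}
\E\|\Sigma-\Sigma^{\star\star}\|_F^2=\int_0^\infty \Prob(\|\Sigma-\Sigma^{\star\star}\|_F^2>t)\,dt .
\end{align*}
Split at $t=(\psi_C\delta)^2$ with $\delta\ge\delta_{J^\dagger}$. The part below contributes $\lesssim\delta^2$. Above it, substitute $t=\psi_C^2x^2$ and use the tail bound; this leaves integrals of $x e^{-aNx^2/\Upsilon^2}$ and $x e^{-aNx/\Upsilon}$ with $a=C_5/8$. The first integrates to $\lesssim\Upsilon^2/N$. The second integrates to $\lesssim\Upsilon^2/N^2\le \Upsilon^2/N$. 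The total is $\lesssim\delta^2+\Upsilon^2/N$.
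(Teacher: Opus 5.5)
Your proof is correct and follows the paper's skeleton. You choose a level $J\le J^\dagger$ adapted to $x$, bound $\|\mathcal{Y}_J-\Sigma^{\star\star}\|_F\lesssim\delta_J$ by \Cref{lemma:tree_properties} part \ref{tree_property_4}, reduce to \Cref{lemma:bound_yj_sigma} at level $J$, and integrate the tail.

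The one real difference is the rounding direction. You round $x$ up, so $x\le\delta_J<2x$. The paper rounds down, taking $\delta_{\tilde J}\le x<2\delta_{\tilde J}$ and $J=\tilde J\vee 1$, and that is where it loses the factor $\frac14$ and ends with $\frac18$.

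Consequently your Step 2 worry is unfounded. Under your definition, $J=J^\dagger$ together with the hypothesis $x\ge\delta_{J^\dagger}$ forces $x=\delta_{J^\dagger}$. So $\delta_J\ge x$ holds in every case, and Step 1 alone already proves the tail bound for all $x\ge\delta_{J^\dagger}$, even with $\frac12$ in place of $\frac18$. The hand-waved argument in Step 2 is therefore unnecessary.
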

\begin{delayedproof}{lemma:sigma_star_star_bound}
    For any $x \geq \delta_{J^\dagger}$ we can find a $\tilde{J} \leq J^\dagger$ s.t. $x \in [\delta_{\tilde{J}},\delta_{\tilde{J}-1})$. If we define $J \coloneqq \tilde{J}\vee 1$ then $1 \leq J \leq J^\dagger \leq J^\star \leq J^{\star\star}$. Then consider the decomposition \[\Sigma - \Sigma^{\star\star} = \Sigma - \mathcal{Y}_J + \mathcal{Y}_J - \Sigma^\star + \Sigma^\star - \Sigma^{\star\star}\] and note that \Cref{lemma:tree_properties} part \ref{tree_property_4} implies
    \begin{align}
        \lVert\mathcal{Y}_{J} - \Sigma^\star\rVert_F = \lVert\mathcal{Y}_{J} - \mathcal{Y}_{J^{\star}}\rVert_F &\leq \frac{d(2+4c)}{c2^J} = \frac{5+4C}{C+1}\frac{d}{2^J}, \label{eq:yj_sigma_star}
        \\ \lVert\Sigma^\star - \Sigma^{\star\star}\rVert_F = \lVert\mathcal{Y}_{J^\star} - \mathcal{Y}_{J^{\star\star}+1}\rVert_F &\leq \frac{d(2+4c)}{c2^{J^\star}} = \frac{5+4C}{C+1}\frac{d}{2^{J^\star}}.  \label{eq:sigma_sigma_star}
    \end{align}
    We can define $\psi_C \coloneqq 6 + 5C$ and then note that
    \begin{align}
        \psi_Cx &= (6 + 5C)x = \left(\frac{6 + 5C}{C+1}\right)(C+1)x \nonumber
        \\ &= \left(1+ \frac{5 + 4C}{C+1}\right)(C+1)x \nonumber
        \\ &= \frac{1}{2}\left(2 + \frac{5+4C}{C+1} + \frac{5+4C}{C+1}\right)(C+1)x \nonumber
        \\ &\overset{(i)}{\geq} \left(2 +\frac{5+4C}{C+1} + \frac{5+4C}{C+1}\right) \frac{d}{2^J} \nonumber
        \\ &\overset{(ii)}{\geq} \frac{d}{2^{J-1}} + \frac{5+4C}{C+1} \frac{d}{2^J} + \frac{5 + 4C}{C+1}\frac{d}{2^{J^\star}} \label{eq:psi_bound}
    \end{align} where $(i)$ and $(ii)$ are due to $x \geq \delta_{\tilde{J}} \geq \delta_J \geq \delta_{J^\star}$. 
    \\ Now we can bound $\|\Sigma - \Sigma^{\star\star}\|_F$ as follows
    \begin{align*}
        \Prob\left(\|\Sigma - \Sigma^{\star\star}\|_F > \psi_C x\right) &\leq \Prob\left(\|\Sigma - \mathcal{Y}_{J}\|_F + \|\mathcal{Y}_J - \Sigma^\star\|_F + \|\Sigma^\star - \Sigma^{\star\star}\|_F > \psi_C x\right)
        \\ &\overset{(i)}{\leq} \Prob \left( \|\mathcal{Y}_J - \Sigma\|_F + \frac{5+4C}{C+1}\frac{d}{2^J} + \frac{5+4C}{C+1}\frac{d}{2^{J^\star}} > \psi_C x\right)
        \\ &\overset{(ii)}{\leq} \Prob\left(\|\mathcal{Y}_J - \Sigma \|_F > \frac{d}{2^{J-1}}\right)
        \\ &\overset{(iii)}{\leq} 2\indicator{J>1}\expfunc{-\frac{1}{2}C_5N\min\left(\frac{\delta_J^2}{\Upsilon^2},\frac{\delta_J}{\Upsilon}\right)}
        \\ &\overset{(iv)}{\leq} 2\indicator{J>1}\expfunc{-\frac{1}{2}C_5N\min\left(\frac{\delta_{\tilde{J}}^2}{\Upsilon^2},\frac{\delta_{\tilde{J}}}{\Upsilon}\right)}
        \\ &\overset{(v)}{\leq} 2\indicator{J>1}\expfunc{-\frac{1}{8}C_5N\min\left(\frac{x^2}{\Upsilon^2},\frac{x}{\Upsilon}\right)}
    \end{align*}
    where $(i)$ is due to \Cref{eq:yj_sigma_star} and \Cref{eq:sigma_sigma_star}, $(ii)$ is due to the the definition of $\psi_C$ and the previous bound \Cref{eq:psi_bound}, $(iii)$ holds from \Cref{lemma:bound_yj_sigma}, $(iv)$ holds because $\delta_J = \delta_{\tilde{J}}$ when $J>1$, and finally $(v)$ is due to the fact that $x < \delta_{\tilde{J}-1}=2\delta_{\tilde{J}}$. Our probability bound holds by noting that $\indicator{J>1} \leq \indicator{J^\star> 1}$. We can obtain the expectation bound by integrating the probability bound over $x$:
   
    \begin{align*}
        \E_\xi \lVert\Sigma - \Sigma^{\star\star}\rVert_F^2 &= \int_0^{\infty} \Prob\big(\lVert\Sigma - \Sigma^{\star\star}\rVert_F^2 > x \big) \dx
        \\ &= 2\psi_C^2 \int_0^\infty x \Prob\big(\lVert\Sigma - \Sigma^{\star\star}\rVert_F > \psi_Cx\big) \dx
        \\ &\leq 2\psi_C^2\bigg[\int_0^\delta x \dx + \int_\delta^\infty x \Prob\big(\lVert\Sigma - \Sigma^{\star\star}\rVert_F > \psi_C x\big) \dx\bigg]
        \\ &\leq \psi_C^2\delta^2 + 4\psi_C^2 \indicator{J>1} \int_\delta^\infty x \expfunc{-\frac{1}{8} C_5N\min\bigg(\frac{x^2}{\Upsilon^2},\frac{x}{\Upsilon}\bigg)} \dx 
        \\ &\overset{(i)}{\lesssim} \delta^2 + \frac{\Upsilon^2}{N}
    \end{align*}
    where $(i)$ is due to the following:
    \\\textbf{Case 1: $\Upsilon > \delta$.}
        \begin{align*}
            &\int_\delta^\infty x \expfunc{-\frac{1}{8} C_5N\min\bigg(\frac{x^2}{\Upsilon^2},\frac{x}{\Upsilon}\bigg)} \dx 
            \\ &= \underbrace{\int_{\delta}^{\Upsilon}  x \expfunc{-\frac{1}{8} C_5N\frac{x^2}{\Upsilon^2}} \dx}_{\mathbf{A}} + \underbrace{\int_{\Upsilon}^\infty x \expfunc{-\frac{1}{8} C_5N\frac{x}{\Upsilon}} \dx}_{\mathbf{B}}
            \shortintertext{let $\gamma \coloneqq \frac{1}{8\Upsilon^2}C_5N$}
            \mathbf{A} &= \frac{1}{2\gamma}\bigg(e^{-\delta^2\gamma} - e^{-\Upsilon^2\gamma}\bigg)
            \\ &= \frac{1}{\frac{1}{4\Upsilon^2}C_5N} \bigg(\expfunc{-\delta^2 \frac{1}{8\Upsilon^2}C_5N} - \expfunc{-\Upsilon^2\frac{1}{8\Upsilon^2}C_5N}\bigg)
            \\ &= \frac{4\Upsilon^2}{C_5N} \bigg(\expfunc{-\frac{\delta^2}{8\Upsilon^2} C_5N} - \expfunc{-\frac{1}{8}C_5N}\bigg)
            \shortintertext{let $\nu \coloneqq \frac{1}{8\Upsilon}C_5 N$}
            \mathbf{B} &= \frac{1}{\nu^2} \bigg((\Upsilon \nu + 1)\expfunc{-\Upsilon\nu}\bigg)
            \\ &= \frac{64\Upsilon^2}{C_5^2 N^2}\bigg( \bigg(\Upsilon  \frac{1}{8\Upsilon}C_5 N + 1\bigg) \expfunc{-\Upsilon \frac{1}{8\Upsilon}C_5 N}\bigg) 
            \\ &= \frac{64\Upsilon^2}{C_5^2 N^2}\bigg( \bigg(\frac{1}{8}C_5 N + 1\bigg) \expfunc{-\frac{1}{8}C_5 N}\bigg) 
            \\ &= \frac{4\Upsilon^2}{C_5 N}\bigg( \bigg(2 + \frac{16}{C_5N}\bigg) \expfunc{-\frac{1}{8}C_5 N}\bigg)
            \shortintertext{so}
            \mathbf{A} + \mathbf{B} &= \frac{\Upsilon^2}{N} \times \frac{4}{C_5}\bigg(\expfunc{-\frac{\delta^2}{8\Upsilon^2} C_5N} +  \bigg(1 + \frac{16}{C_5N}\bigg) \expfunc{-\frac{1}{8}C_5 N}\bigg)
        \end{align*}
    \textbf{Case 2: $\Upsilon \leq \delta$.}
        \begin{align*}
            &\int_\delta^\infty x \expfunc{-\frac{1}{8} C_5N\min\bigg(\frac{x^2}{\Upsilon^2},\frac{x}{\Upsilon}\bigg)} \dx 
            \\ &\leq \int_{\Upsilon}^\infty x \expfunc{-\frac{1}{8} C_5N\frac{x}{\Upsilon}} \dx
            \\ &= \mathbf{B}
        \end{align*}
\end{delayedproof}

\begin{lemma}\label[lemma]{lemma:delta_J_123_rates}
    Let $C,C_1,C_5$ be the universal constants from \Cref{lemma:bound_yj_sigma}. Recall conditions $\rom{1}$, $\rom{2}$, $\rom{3}$ and respective $J^{\rom{1}}$,$J^{\rom{2}}$,$J^{\rom{3}}$. Assume that for each case the relevant $J$ case holds for at least $\delta_1$. Recall that $\delta_J \coloneqq \frac{d}{2^{J-1}(C+1)}$ and $c = 2(C+1)$.
    \\ Then
    \begin{align}
        \left(\frac{\delta_{J^{\rom{2}}}}{\Upsilon}\right)^2 &\asymp \frac{1}{N}, \label{eq:delta_J_asymp_2}
        \\\left(\frac{\delta_{J^{\rom{3}}}}{\Upsilon}\right)^2 &\asymp \varepsilon^2\log^2(1/\varepsilon), \label{eq:delta_J_asymp_3}
    \shortintertext{and if $N \gtrsim \sup_{\delta>0} \log \mkloc(\delta,2c)$ then}
        \left(\frac{\delta_{J^{\rom{1}}}}{\Upsilon}\right)^2 &\asymp \min\bigg(\frac{\delta_{J^{\rom{1}}}^2}{\Upsilon^2},\frac{\delta_{J^{\rom{1}}}}{\Upsilon}\bigg) ,\label{eq:delta_J_asymp_4}
        \shortintertext{and}
        \delta_{J^{\rom{1}}} &\gtrsim \delta_{J^{\rom{2}}}. \label{eq:delta_J_asymp_1}
    \end{align}
\end{lemma}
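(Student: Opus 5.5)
Each of the three conditions defines its index $J$ as the largest one for which some inequality holds along the dyadic grid $\delta_J = \delta_1 2^{-(J-1)}$. The plan is therefore to use the same ``largest dyadic index'' sandwich for all four claims. If $J^{r}$ is maximal for a monotone condition, then $\delta_{J^r}$ satisfies the condition while $\delta_{J^r+1}=\delta_{J^r}/2$ violates it. The assumption that each condition holds at $\delta_1$ guarantees that $J^r\geq 1$ is well defined, so no boundary case arises. Monotonicity is immediate for conditions II and III, since both sides of the inequality are fixed except $\delta_J^2$, which is decreasing in $J$.

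\emph{Claims \eqref{eq:delta_J_asymp_2} and \eqref{eq:delta_J_asymp_3}.} For condition II, maximality gives
\[
\delta_{J^{\rom{2}}}^2/\Upsilon^2\geq N^{-1}>\delta_{J^{\rom{2}}+1}^2/\Upsilon^2=\tfrac14\,\delta_{J^{\rom{2}}}^2/\Upsilon^2,
\]
so $N^{-1}\leq \delta_{J^{\rom{2}}}^2/\Upsilon^2<4N^{-1}$. Condition III is handled identically with $N^{-1}$ replaced by $C_1^2\varepsilon^2\log^2(1/\varepsilon)$, and $C_1$ is absorbed into the constants.

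\emph{Claim \eqref{eq:delta_J_asymp_4}.} Here I would show that $\delta_{J^{\rom{1}}}/\Upsilon\leq B$ for a constant $B$. Condition I at $J^{\rom{1}}$ gives
\[
C_5N\min\bigl(\delta_{J^{\rom{1}}}^2/\Upsilon^2,\ \delta_{J^{\rom{1}}}/\Upsilon\bigr)\geq \log 2,
\]
which is a lower bound, not an upper bound, so the upper bound must come from elsewhere. The source will be $\delta_J\leq\delta_1=d/(C+1)$ combined with $d\leq 2\sqrt n c_1$ and $\Upsilon=C_{SE}c_1$, giving $\delta_J/\Upsilon\lesssim\sqrt n$. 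This alone is not $O(1)$, so I would instead use the maximality of $J^{\rom{1}}$. If $\delta_{J^{\rom{1}}}/\Upsilon>1$, the minimum equals $\delta_J/\Upsilon$. Maximality then fails at $J^{\rom{1}}+1$, so
\[
C_5N\min\bigl(\delta_{J+1}^2/\Upsilon^2,\ \delta_{J+1}/\Upsilon\bigr)<4\log\mkloc(c\delta_{J+1},2c)\vee\log 2\lesssim N
\]
by the sample-size assumption $N\gtrsim\sup_\delta\log\mkloc(\delta,2c)$. This yields $\min(\delta_{J+1}^2/\Upsilon^2,\delta_{J+1}/\Upsilon)\lesssim 1$, and since $\delta_{J+1}=\delta_J/2$ it follows that $\delta_J/\Upsilon\leq B$. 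With $x\leq B$ we have $\min(x^2,x)\geq x^2/\max(1,B)$, and trivially $\min(x^2,x)\leq x^2$, which proves \eqref{eq:delta_J_asymp_4}. One edge case needs care: if $J^{\rom{1}}$ is the last index (condition I holding for all $J$), there is no failing $J+1$. I would argue this cannot happen, because as $J\to\infty$ the left side tends to $0$ while the right side stays at least $\log 2$.

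\emph{Claim \eqref{eq:delta_J_asymp_1}.} By \eqref{eq:delta_J_asymp_4} and condition I at $J^{\rom{1}}$,
\[
N\delta_{J^{\rom{1}}}^2/\Upsilon^2\gtrsim N\min(\cdot)\geq \log 2/C_5,
\]
so $\delta_{J^{\rom{1}}}^2/\Upsilon^2\gtrsim N^{-1}\asymp\delta_{J^{\rom{2}}}^2/\Upsilon^2$ by \eqref{eq:delta_J_asymp_2}. Taking square roots gives the claim.

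The main obstacle is \eqref{eq:delta_J_asymp_4}: the boundedness of $\delta_{J^{\rom{1}}}/\Upsilon$ relies on exploiting the failure of condition I at $J^{\rom{1}}+1$ together with the sample-size assumption, rather than any direct bound from the diameter.
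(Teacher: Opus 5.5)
Your proposal is correct and follows essentially the same route as the paper. Conditions II and III are handled by the dyadic maximality sandwich. Condition I failing at $J^{\rom{1}}+1$, combined with $N \gtrsim \sup_{\delta} \log \mkloc(\delta,2c)$, gives $\delta_{J^{\rom{1}}}/\Upsilon = O(1)$ and hence the equivalence with the minimum, and condition I at $J^{\rom{1}}$ itself gives $\delta_{J^{\rom{1}}} \gtrsim \delta_{J^{\rom{2}}}$. Your explicit check that condition I cannot hold for every $J$, so that $J^{\rom{1}}+1$ exists, is a small point the paper leaves implicit.
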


\begin{delayedproof}{lemma:delta_J_123_rates}
    We handle each of these rates in order. 
    First, by condition \ref{bound_cond2}, $\left(\frac{\delta_{J^{\rom{2}}}}{\Upsilon}\right)^2 \lesssim \left(\frac{\delta_{J^{\rom{2}}}}{2\Upsilon}\right)^2 = \left(\frac{\delta_{J^{\rom{2}}+1}}{\Upsilon}\right)^2 \leq N^{-1} \leq \left(\frac{\delta_{J^{\rom{2}}}}{\Upsilon}\right)^2$ and \cref{eq:delta_J_asymp_2} holds, and by condition \ref{bound_cond3} and similar logic \cref{eq:delta_J_asymp_3} holds.
    \\ Now, note that because $J^{\rom{1}}$ is the \textbf{largest} $J$ for which \ref{bound_cond1} holds, we have
    \begin{align*}
       C_5 \min\left(\frac{\delta_{J^{\rom{1}}}^2}{\Upsilon^2},\frac{\delta_{J^{\rom{1}}}}{\Upsilon}\right) 
        \lesssim C_5 \min\left(\frac{\delta_{J^{\rom{1}}}^2}{4\Upsilon^2},\frac{\delta_{J^{\rom{1}}}}{2\Upsilon}\right) 
        \leq \frac{4\log \mkloc\left(\frac{\delta_{J^{\rom{1}}}}{2}c,2c\right) \bigvee \log 2}{N}
    \end{align*}
    If $\delta_{J^{\rom{1}}}\leq \Upsilon$ then $\min\left(\frac{\delta_{J^{\rom{1}}}^2}{\Upsilon^2},\frac{\delta_{J^{\rom{1}}}}{\Upsilon}\right) = \frac{\delta_{J^{\rom{1}}}^2}{\Upsilon^2}$ and the result holds trivially.
    
    Now assume $\delta_{J^{\rom{1}}} > \Upsilon$. 
    \\ Note that $\frac{\delta_{J^{\rom{1}}}^2}{\Upsilon^2} \geq \frac{\delta_{J^{\rom{1}}}}{\Upsilon}$, so it suffices to show $\frac{\delta_{J^{\rom{1}}}^2}{\Upsilon^2} \lesssim \frac{\delta_{J^{\rom{1}}}}{\Upsilon}$.
    Now since $N\gtrsim \sup_{\delta>0} \log \mkloc(\delta,2c)$
    \begin{align*}
        C_5 \frac{\delta_{J^{\rom{1}}}}{2\Upsilon} = \frac{1}{2}C_5 \min\left(\frac{\delta_{J^{\rom{1}}}^2}{\Upsilon^2},\frac{\delta_{J^{\rom{1}}}}{\Upsilon}\right) \lesssim \frac{4\log \mkloc\left(\frac{1}{2}\delta_{J^{\rom{1}}}c,2c\right) \bigvee \log 2}{\sup_{\delta>0}\log \mkloc(\delta,2c) \bigvee 1} \lesssim B 
    \end{align*}
    for constant $B$. Then $\frac{\delta_{J^{\rom{1}}}}{2\Upsilon}\lesssim \frac{B}{C_5}$ so $\frac{\delta_{J^{\rom{1}}}^2}{4\Upsilon^2}\lesssim \frac{B^2}{C_5^2}$ and \Cref{eq:delta_J_asymp_4} follows by noting that 
    \begin{align*}
        N \left(\frac{\delta_{J^{\rom{1}}}}{\Upsilon}\right)^2 \lesssim N \left(\frac{\delta_{J^{\rom{1}}}}{2\Upsilon}\right)^2 \lesssim N \frac{B^2}{C_5^2} \lesssim N \frac{\delta_{J^{\rom{1}}}}{\Upsilon}.
    \end{align*}
    In other words, $\frac{\delta_{J^{\rom{1}}}}{\Upsilon} \lesssim 1$ which implies that $\min\left(\frac{\delta_{J^{\rom{1}}}^2}{4\Upsilon^2},\frac{\delta_{J^{\rom{1}}}}{2\Upsilon}\right) \asymp \frac{\delta_{J^{\rom{1}}}^2}{4\Upsilon^2}$.
    
    Finally, $\left(\frac{\delta_{J^{\rom{1}}}}{\Upsilon}\right)^2 \asymp C_5\min\bigg(\frac{\delta_{J^{\rom{1}}}^2}{\Upsilon^2},\frac{\delta_{J^{\rom{1}}}}{\Upsilon}\bigg) \geq \frac{\log \mkloc\left(c\delta_{J^{\rom{1}}},2c\right) \bigvee 1}{N} \geq N^{-1} \asymp \left(\frac{\delta_{J^{\rom{2}}}}{\Upsilon}\right)^2$ and \Cref{eq:delta_J_asymp_1} holds.
\end{delayedproof}

Now, with the above lemmas established we can complete the proof of \Cref{theorem:upper_bound}.

\begin{delayedproof}{theorem:upper_bound}
    If all three conditions in \Cref{lemma:sigma_star_star_bound} (\ref{bound_cond1}, \ref{bound_cond2}, and \ref{bound_cond3}) hold then \Cref{lemma:sigma_star_star_bound} implies that \Cref{eq:upper_bound} holds. As we traverse the tree in \Cref{algorithm:robust_algorithm} $J$ grows and $\delta_J$ shrinks, implying that there exists some minimal $J$ where at least one of the conditions does not hold (that is, where the condition ``breaks"). We proceed with the proof by analyzing each of these cases separately, first for the case that each never holds and then for the case that all hold at level 1. When all thre cases hold at level 1, note that
    \begin{itemize}
        \item $d^2 \asymp \delta_1^2 \gtrsim \delta_{J^\star}^2$ by the definition of $\delta_J$,
        \item $d^2 \asymp \delta_1^2 \gtrsim \delta_{J^{\rom{2}}}^2 \asymp \frac{\Upsilon^2}{N}$ by definition of $\delta_J$ and \eqref{eq:delta_J_asymp_2}, and
        \item $d^2 \asymp \delta_1^2 \gtrsim \delta_{J^{\rom{3}}}^2 \asymp \Upsilon^2\varepsilon^2\log^2(1/\varepsilon)$ definition of $\delta_J$ and \eqref{eq:delta_J_asymp_3},
    \end{itemize}
    and thus $d^2 \gtrsim \max\left(\delta_{J^\star}^2,\frac{\Upsilon^2}{N},\varepsilon^2\log^2(1/\varepsilon)\right)$. 
    \\\textbf{Case 1: At least one of the conditions never holds.}
    \\\textbf{Case 1A: Condition \ref{bound_cond1} never holds.}
    Here we have that $\delta_{J^\star}^2 = \delta_1^2 \asymp d^2$ and thus
        \begin{align*}
            \E_\xi \|\Sigma - \Sigma^{\star\star}\|_F^2 \leq d^2 = \left(d^2 \vee \varepsilon^2 \log^2(1/\varepsilon) \right) \bigwedge d^2 \asymp \left(\delta_{J^\star}^2 \vee \varepsilon^2 \log^2(1/\varepsilon) \right) \bigwedge d^2.
        \end{align*}
    \\\textbf{Case 1B: Condition \ref{bound_cond3} never holds.}
    The mere fact of condition \ref{bound_cond3} failing gives
        \begin{align*}
            \E_\xi \|\Sigma - \Sigma^{\star\star}\|_F^2 \leq d^2 \asymp \left(\delta_{J^\star}^2 \vee \varepsilon^2 \log^2(1/\varepsilon) \right) \bigwedge d^2.
        \end{align*}
    \\\textbf{Case 1C: Condition \ref{bound_cond2} never holds.}
    If condition $\rom{1}$ holds at level 1 we have that \\ $C_5 N \min\big(\frac{\delta_{J^{\rom{1}}}^2}{\Upsilon^2},\frac{\delta_{J^{\rom{1}}}}{\Upsilon}\big) \geq \log 2$ and therefore $\delta_{J^{\rom{1}}}^2 \geq \frac{\Upsilon^2}{N}\frac{\log 2}{C_5} > \frac{\log 2}{C_5}\delta_1^2$. On the other hand, $\delta_{J^{\rom{1}}} \leq \delta_1$ so $\delta_{J^{\rom{1}}}^2 \asymp \delta_1^2 \asymp d^2$. 
    If condition $\rom{1}$ never holds then $\delta_{J^\star}^2 = \delta_1^2 \asymp d^2$.
    Therefore we obtain our desired bound 
    \begin{align*}
        \E_\xi \|\Sigma - \Sigma^{\star\star}\|_F^2 \leq d^2 \asymp \left(\delta_{J^\star}^2 \vee \varepsilon^2 \log^2(1/\varepsilon) \right) \bigwedge d^2.
    \end{align*}
    \\\textbf{Case 2: All three conditions hold at level 1.}
    \\\textbf{Case 2A:  Condition \ref{bound_cond1} breaks at $J^\dagger+1$}
        Note that $J^\star = J^{\rom{1}}$. Recall that \[d^2\asymp\delta_1^2 \gtrsim \max\left(\delta_{J^\star}^2,\frac{\Upsilon^2}{N},\varepsilon^2\log^2(1/\varepsilon)\right).\] Furthermore, we have \begin{align}\delta_{J^\star}^2 = \delta_{J^{\rom{1}}}^2 = \delta_{J^\dagger}^2 \geq \delta_{J^{\rom{3}}}^2 \asymp \Upsilon^2 \varepsilon^2\log^2(1/\varepsilon).\label{eq:delta_j_star_eps}\end{align} Therefore
        \begin{align*}
            \E_\xi \|\Sigma - \Sigma^{\star\star}\|_F^2 \lesssim \delta_{J^\star}^2 + \frac{\Upsilon^2}{N} \lesssim \delta_{J^\star}^2 \lesssim \left(\delta_{J^\star}^2 \vee\varepsilon^2\log^2(1/\varepsilon)\right)\bigwedge d^2,
        \end{align*}
        where the first inequality follows from \Cref{lemma:sigma_star_star_bound}, the second is due to \Cref{lemma:delta_J_123_rates} \\ $\left(\delta^2_{J^{\rom{1}}} \gtrsim \delta^2_{J^{\rom{2}}} \asymp \frac{\Upsilon^2}{N}\right)$, and the final inequality is due to \Cref{eq:delta_j_star_eps} and the fact that $d^2 \gtrsim \max\left(\delta_{J^\star}^2,\frac{\Upsilon^2}{N},\varepsilon^2\log^2(1/\varepsilon)\right)$.
    \\\textbf{Case 2B: Condition \ref{bound_cond3} breaks at $J^\dagger+1$.}
        Recall that this implies $J^\star = J^{\rom{1}}$. Note that $\delta^2_{J^{\rom{3}}} = \delta_{J^\dagger}^2 \geq \delta_{J^\star}^2 \vee \delta_{J^{\rom{2}}}^2$, so $\delta_{J^{\rom{3}}}^2 \geq \delta_{J^{\rom{2}}}^2 \asymp \Upsilon^2/N$ and $\varepsilon^2\log^2(1/\varepsilon) \asymp \delta_{J^{\rom{3}}}^2 \geq \delta_{J^\star}^2$. Therefore 
        \begin{align*}
            \E_\xi \|\Sigma - \Sigma^{\star\star}\|_F^2 \lesssim \delta^2_{J^{\rom{3}}} + \frac{\Upsilon^2}{N} \asymp \varepsilon^2\log^2(1/\varepsilon) \asymp \left(\delta_{J^\star}^2 \vee \varepsilon^2\log^2(1/\varepsilon)\right)\bigwedge d^2.
        \end{align*}
    \textbf{Case 2C: Condition \ref{bound_cond2} breaks at $J^\dagger+1$.}
        Now, assume without loss of generality that $J^{\rom{1}}\bigwedge J^{\rom{3}} \geq 1$. If this is not true, follow the relevant previous proof. Recall one final time that this implies $J^\star = J^{\rom{1}}$. Now we have $\delta^2_{J^{\rom{2}}} = \delta_{J^\dagger}^2 \geq \delta_{J^\star}^2 \vee \delta_{J^{\rom{3}}}^2$ which, with \Cref{lemma:delta_J_123_rates}, implies $\delta_{J^\star}^2 \gtrsim  \delta^2_{J^{\rom{2}}} \geq \delta_{J^{\rom{3}}}^2 \asymp \varepsilon^2\log^2(1/\varepsilon)$. This all gives us $\delta_{J^\star}^2 \asymp \delta_{J^{\rom{2}}}^2 \asymp \Upsilon^2/N \gtrsim \varepsilon^2\log^2(1/\varepsilon)$. Thus, we achieve our desired bound 
        \begin{align*}
            \E_\xi \|\Sigma - \Sigma^{\star\star}\|_F^2 \leq \delta_{J^{\rom{2}}}^2 + \frac{\Upsilon^2}{N} \asymp \left(\delta_{J^\star}^2 \vee \varepsilon^2 \log^2(1/\varepsilon) \right)
            \bigwedge d^2.
        \end{align*}
\end{delayedproof}

\section{Proofs for Section \ref{sec:examples} (examples)}\label{appendix:ex_proofs}

\begin{delayedproof}{lemma:Uq_star_shaped}
    Note that trivially $c_0\mathbb{I} \in \mathcal{U}_q$. Consider an arbitrary point $\Sigma^0 \in \mathcal{U}_q(s_0(n))$ and 
\begin{align*}
    \Sigma^{1} &= \alpha \Sigma^0 + (1-\alpha)c_0\mathbb{I}.
\end{align*}
First note that 
\begin{align*}
    \lambda_{\min}(\Sigma^1) &= \lambda_{\min}(\alpha \Sigma^0 + (1-\alpha)c_0\mathbb{I}) 
    \\ &= \alpha \lambda_{\min}(\Sigma^0) + (1-\alpha)c_0
    \\ &\geq c_0
\end{align*}
and
\begin{align*}
    \lambda_{\max}(\Sigma^1) &= \lambda_{\max}(\alpha \Sigma^0 + (1-\alpha)c_0\mathbb{I}) 
    \\ &= \alpha \lambda_{\max}(\Sigma^0) + (1-\alpha)c_0
    \\ &\leq c_1.
\end{align*}
We conclude by showing separately that the column bounds hold when $q \in (0,1)$ and $q=0$.
\\\textbf{Case 1: $q > 0$.}
For an arbitrary $i \in n$ we have 
\begin{align*}
    \sum_{j=1}^n |\sigma^1_{ij}|^q &= \sum_{j=1}^n |\alpha \sigma_{ij}^0 + (1-\alpha) c_0\indicator{i=j}|^q
    \\ &= |\alpha\sigma_{ii}^0 + (1-\alpha)c_0|^q + \sum_{j\neq i}\alpha^q|\sigma_{ij}^0|^q
    \\ &\leq |\alpha\sigma_{ii}^0 + (1-\alpha)\sigma_{ii}^0|^q + \sum_{j\neq i}\alpha^q|\sigma_{ij}^0|^q
    \\ &\leq \sum_{j=1}^n |\sigma^0_{ij}|^q \leq s_0(n),
\end{align*}
so the column bounds hold.
\\\textbf{Case 2: $q =0$} Note that in this case shrinking $\Sigma^0$ and adding to the diagonal does not incease the support, because $\lambda_{\min}(\Sigma^0) \geq c_0 > 0$ the diagonal of $\Sigma^0$ must be positive. 

\end{delayedproof}

\begin{delayedproof}{lemma:delta_properties}

Plug in our value of $\delta$ to \eqref{eq:n_to_log_n}see that
    \begin{align*}
    \log \bigg( n^{2-\frac{2}{2-q}}s_0(n)^{-\frac{2}{2-q}}\delta^{\frac{2q}{2-q}} \bigg) &\lesssim \log\bigg(n^{2-\frac{2}{2-q}}s_0(n)^{-\frac{2}{2-q}}\bigg(ns_0(n)\left(\frac{\log(n)}{N}\right)^{\frac{2-q}{2}}\bigg)^{\frac{q}{2-q}} \bigg)
    \\&= \log\bigg(n^{2-\frac{2}{2-q}+\frac{q}{2-q}}s_0(n)^{\frac{q-2}{2-q}}\bigg(\frac{\log n}{N}\bigg)^{q/2}\bigg)
    \\&= \log\bigg(\frac{n}{s_0(n)}\bigg(\frac{\log n}{N}\bigg)^{q/2}\bigg)
    \\ &= \log n - \log s_0(n) + \frac{q}{2}\log\bigg(\frac{\log n}{N}\bigg)
    \\ &\lesssim \log n + \frac{q}{2}\log\bigg(\frac{\log n}{N}\bigg)
\end{align*}
which is bounded above by a constant multiple of $\log (n)$ if $\frac{\log n}{N} \lesssim  n$ (which is weaker than our required $N \gtrsim n^2$), and gives \eqref{eq:n_to_log_n}. 
Next, plug in the value of $\delta$ to \eqref{eq:Rdelta_bound} get that
\begin{align*}
    R^{-1}\delta &= n^{-1/q}s_0(n)^{-1/q}\delta 
    \\ &=  n^{-1/q}s_0(n)^{-1/q} n^{1/2} s_0(n)^{1/2} \bigg(\frac{\log n}{N}\bigg)^{1/2-q/4}
    \\ &= n^{1/2-1/q} s_0(n)^{1/2-1/q}\bigg(\frac{\log n}{N}\bigg)^{1/2-q/4} 
\end{align*}
and we see that %the first condition of $R^{-1}\delta \gtrsim n^{1-2/q}$ \eqref{eq:Rdelta_bound} holds if
$R^{-1}\delta \gtrsim n^{1-2/q}$ \eqref{eq:Rdelta_bound} holds if
\begin{align*}
    n^{1-2/q} &\lesssim n^{1/2-1/q}s_0(n)^{1/2-1/q} \bigg(\frac{\log n}{N}\bigg)^{1/2-q/4}
    \\ n^{1/2 - 1/q} &\lesssim s_0(n)^{1/2-1/q} \bigg(\frac{\log n}{N}\bigg)^{1/2-q/4}
\end{align*}
and we have that 
\begin{align*}
\bigg(\frac{n}{s_0(n)}\bigg)^{1/2-1/q} &\lesssim \bigg(\frac{\log n}{N}\bigg)^{1/2-q/4}
\\ \bigg(\frac{s_0(n)}{n}\bigg)^{(2-q)/(2q)} &\lesssim \bigg(\frac{\log n}{N}\bigg)^{(2-q)/4}
\\ \frac{s_0(n)}{n} &\lesssim \bigg(\frac{\log n}{N}\bigg)^{q/2}
\\ s_0(n) & \lesssim n\bigg(\frac{\log n}{N}\bigg)^{q/2}
\end{align*}
and note that this is a weaker condition than $s_0(n) \lesssim nN^{-q/2}\log^{q/2-1}n$. 
% Now, the second condition holds if 
% \begin{align*}
%     n^{1-2/q} &\lesssim \frac{n^{1-1/q} s_0(n)^{-1/q}}{N^{1/2}}
%     \\ n^{-1/q}&\lesssim \frac{1}{\sqrt{N}}s_0(n)^{-1/q}
%     \\ s_0(n)^{1/q} &\lesssim n^{1/q} N^{-1/2}
%     \\ s_0(n) &\lesssim n N^{-q/2}
% \end{align*}

\end{delayedproof}

\begin{delayedproof}{lemma:spiked_local_entropy}
Fix an arbitrary
\[
    \Sigma_0=I+x_0x_0^\top\in K^+,
\]
and let
\[
    \{\Sigma_i=I+x_ix_i^\top\}_{i=1}^M
    \subseteq
    B_F(\Sigma_0,\delta)\cap K^+
\]
be a $\delta/c$-packing in Frobenius norm.  We will bound $M$
uniformly over $\Sigma_0$.  Write
\[
    a\coloneqq \|x_0\|_2,
    \qquad
    b_i\coloneqq \|x_i\|_2.
\]
For every $i$,
\begin{equation}
    |b_i^2-a^2|
    =
    \left|
        \|x_ix_i^\top\|_F-\|x_0x_0^\top\|_F
    \right|
    \leq
    \|x_ix_i^\top-x_0x_0^\top\|_F
    \leq \delta.
    \label{eq:spike_norm_control}
\end{equation} 
We separate the proof into two cases.

\medskip
\noindent
\textbf{Case 1: $a^2\leq 2\delta$.}
By \eqref{eq:spike_norm_control},
\[
    b_i^2\leq a^2+\delta\leq 3\delta,
\]
and therefore
\[
    x_i\in B_2(0,\sqrt{3\delta})\cap K^0.
\]
On the other hand, for $i\neq j$,
\begin{align*}
    \frac{\delta}{c}
    &\leq
    \|x_ix_i^\top-x_jx_j^\top\|_F \\
    &=
    \|(x_i-x_j)x_i^\top+x_j(x_i-x_j)^\top\|_F \\
    &\leq
    (b_i+b_j)\|x_i-x_j\|_2 \\
    &\leq
    2\sqrt{3\delta}\,\|x_i-x_j\|_2.
\end{align*}
Hence
\[
    \|x_i-x_j\|_2
    \geq
    \frac{\sqrt{\delta}}{2\sqrt{3}\,c}.
\]
Thus $\{x_i\}_{i=1}^M$ is a
$\sqrt{\delta}/(2\sqrt{3}c)$-packing of
$B_2(0,\sqrt{3\delta})\cap K^0$, and consequently
\[
    M
    \leq
    \mathcal{M}_{K^0}^{\operatorname{loc}}
    (\sqrt{3\delta},6c).
\]
Since $\delta\leq K_{\max}^2$,
\[
    \sqrt{3\delta}
    \geq
    \frac{\delta}{K_{\max}}.
\]
Because $K^0$ is star-shaped, its local metric entropy is
non-increasing in the local radius.  Therefore
\begin{equation}
    M
    \leq
    \mathcal{M}_{K^0}^{\operatorname{loc}}
    \left(
        \frac{\delta}{K_{\max}},
        6c
    \right).
    \label{eq:small_spike_case}
\end{equation}

\medskip
\noindent
\textbf{Case 2: $a^2>2\delta$.}
In this case \eqref{eq:spike_norm_control} gives
\begin{equation}
    \frac{a}{\sqrt{2}}
    \leq b_i
    \leq
    \sqrt{\frac32}\,a
    \qquad
    \text{for every }i.
    \label{eq:bi_comparable_a}
\end{equation}
For each $i$, choose $s_i\in\{-1,1\}$ so that
$s_i\langle x_i,x_0\rangle\geq 0$.  We claim that
\begin{equation}
    \|x_i-s_ix_0\|_2
    \leq
    \frac{2\delta}{a}.
    \label{eq:local_inverse_rank_one}
\end{equation}
To see this, set
\[
    t_i
    \coloneqq
    \frac{|\langle x_i,x_0\rangle|}{ab_i}
    \in[0,1].
\]
The identity
\begin{align*}
    \|x_ix_i^\top-x_0x_0^\top\|_F^2
    &=
    b_i^4+a^4-2\langle x_i,x_0\rangle^2 \\
    &=
    (b_i^2-a^2)^2
    +
    2a^2b_i^2(1-t_i^2)
\end{align*}
and the fact that the left-hand side is at most $\delta^2$ implies
\[
    2a^2b_i^2(1-t_i^2)\leq \delta^2.
\]
Moreover,
\[
    |b_i-a|
    =
    \frac{|b_i^2-a^2|}{a+b_i}
    \leq
    \frac{\delta}{a}.
\]
Therefore, using $1-t_i\leq 1-t_i^2$ and
$b_i\geq a/\sqrt{2}$,
\begin{align*}
    \|x_i-s_ix_0\|_2^2
    &=
    (b_i-a)^2+2ab_i(1-t_i) \\
    &\leq
    \frac{\delta^2}{a^2}
    +
    2ab_i(1-t_i^2) \\
    &\leq
    \frac{\delta^2}{a^2}
    +
    \frac{\delta^2}{ab_i} \\
    &\leq
    (1+\sqrt{2})\frac{\delta^2}{a^2}
    <
    \frac{4\delta^2}{a^2},
\end{align*}
which proves \eqref{eq:local_inverse_rank_one}.

Partition the packing indices according to the sign $s_i$:
\[
    I_+\coloneqq\{i:s_i=1\},
    \qquad
    I_-\coloneqq\{i:s_i=-1\}.
\]
By \eqref{eq:local_inverse_rank_one},
\[
    \{x_i:i\in I_+\}
    \subseteq
    B_2\left(x_0,\frac{2\delta}{a}\right)\cap K^0.
\]
If $I_-\neq\varnothing$, choose any $i_0\in I_-$ and put
$z=x_{i_0}\in K^0$.  Then, again by
\eqref{eq:local_inverse_rank_one},
\[
    \{x_i:i\in I_-\}
    \subseteq
    B_2\left(z,\frac{4\delta}{a}\right)\cap K^0.
\]
Thus both sign classes are contained in a Euclidean ball, centered
at a point of $K^0$, of radius at most $4\delta/a$.

It remains to control their packing separation.  For $i\neq j$,
\begin{align*}
    \frac{\delta}{c}
    &\leq
    \|x_ix_i^\top-x_jx_j^\top\|_F \\
    &\leq
    (b_i+b_j)\|x_i-x_j\|_2 \\
    &\leq
    \sqrt{6}\,a\,\|x_i-x_j\|_2,
\end{align*}
where the last inequality follows from
\eqref{eq:bi_comparable_a}.  Hence
\[
    \|x_i-x_j\|_2
    \geq
    \frac{\delta}{\sqrt{6}\,ca}
    =
    \frac{4\delta/a}{4\sqrt{6}\,c}.
\]
It follows that each sign class has cardinality at most
\[
    \mathcal{M}_{K^0}^{\operatorname{loc}}
    \left(
        \frac{4\delta}{a},
        4\sqrt{6}\,c
    \right).
\]
Since $a\leq K_{\max}$,
\[
    \frac{4\delta}{a}
    \geq
    \frac{\delta}{K_{\max}},
\]
so the monotonicity of local metric entropy again yields
\begin{equation}
    M
    \leq
    2\,
    \mathcal{M}_{K^0}^{\operatorname{loc}}
    \left(
        \frac{\delta}{K_{\max}},
        4\sqrt{6}\,c
    \right).
    \label{eq:large_spike_case}
\end{equation}

Combining \eqref{eq:small_spike_case} and
\eqref{eq:large_spike_case}, and taking for example $C=10$,
gives \eqref{eq:spiked_local_entropy}.  Since the bound is uniform
over $\Sigma_0\in K^+$, taking the supremum over the local center
completes the proof.
\end{delayedproof}

\end{document}